\documentclass{amsart} 
\usepackage{latexsym}
\usepackage{amsfonts}
\usepackage{amssymb}
\usepackage{amsmath}
\usepackage{amscd}
\usepackage{graphicx}
\usepackage{eucal}
\usepackage{amsthm}
\usepackage[all]{xy}
\usepackage{pdfsync}

\newcommand{\mb}{\mathbf}

\newcommand{\N}{\mathbb{N}}
\newcommand{\m}[1]{\mathcal{#1}}

\begin{document}
\newtheorem{thm}{Theorem}[section]
\newtheorem{prop}[thm]{Proposition}
\newtheorem{lem}[thm]{Lemma}
\newtheorem{cor}[thm]{Corollary}

\newtheoremstyle{example}{\topsep}{\topsep}%
     {}
     {}
     {\bfseries}
     {.}
     {2pt}
     {\thmname{#1}\thmnumber{ #2}\thmnote{ #3}}

   \theoremstyle{example}
	\newtheorem{conv}[thm]{Conventions}
   \newtheorem{nota}[thm]{Notation}
   \newtheorem{example}[thm]{Example}
	\newtheorem{Defi}[thm]{Definition}
   \newtheorem{rem}[thm]{Remark}

   \title[Operads]{Operads with general groups of equivariance, and some 2-categorical aspects of operads in $\mb{Cat}$}

\author{Alexander S. Corner}
\address{
School of Mathematics and Statistics,
University of Sheffield,
Sheffield, UK, S3 7RH
}
\email{a.s.corner@sheffield.ac.uk}
\author{Nick Gurski}
\address{
School of Mathematics and Statistics,
University of Sheffield,
Sheffield, UK, S3 7RH
}
\email{nick.gurski@sheffield.ac.uk}
\keywords{operad, 2-category, pseudo-commutativity}
\subjclass{18D05, 18D10, 18D50}

\begin{abstract}
We give a definition of an operad with general groups of equivariance suitable for use in any symmetric monoidal category with appropriate colimits.  We then apply this notion to study the 2-category of algebras over an operad in $\mb{Cat}$.  We show that any operad is finitary, that an operad is cartesian if and only if the group actions are nearly free (in a precise fashion), and that the existence of a pseudo-commutative structure largely depends on the groups of equivariance.  We conclude by showing that the operad for braided strict monoidal categories has two canonical pseudo-commutative structures.
\end{abstract}

\maketitle

\tableofcontents
	
\section*{Introduction}

Operads were defined by May \cite{maygeom} in the early 70's to provide a convenient tool to approach problems in algebraic topology, notably the question of when a space $X$ admits an $n$-fold delooping $Y$ so that $X \simeq \Omega^{n}Y$.  An operad, like an algebraic theory \cite{lawvere-thesis}, is something like a presentation for a monad or algebraic structure.  The theory of operads has seen great success, and we would like to highlight two reasons.  First, operads can be defined in any suitable symmetric monoidal category, so that there are operads of topological spaces, of chain complexes, of simplicial sets, and of categories, to name a few examples.  Moreover, symmetric (lax) monoidal functors carry operads to operads, so we can use operads in one category to understand objects in another via transport by such a functor.  Second, operads in a fixed category are highly flexible tools.  In particular, the categories listed above all have some inherent notion of ``homotopy equivalence'' which is weaker than that of isomorphism, so we can study operads which are equivalent but not isomorphic.  These tend to have algebras which have similar features in an ``up-to-homotopy'' sense but very different combinatorial or geometric properties arising from the fact that different objects make up these equivalent but not isomorphic operads.

Operads in the category $\mb{Cat}$ of small categories have a unique flavor arising from the fact that $\mb{Cat}$ is not just a category but a 2-category.  These 2-categorical aspects have not been widely treated in the literature, although a few examples can be found.  Lack \cite{lack-cod} mentions braided $\mb{Cat}$-operads (the reader new to braided operads should refer to the work of Fiedorowicz \cite{fie-br}) in his work on coherence for 2-monads, and Batanin \cite{bat-eh} uses lax morphisms of operads in $\mb{Cat}$ in order to define the notion of an internal operad.  But aside from a few appearances, the basic theory of operads in $\mb{Cat}$ and their 2-categorical properties seems missing.  This paper was partly motivated by the need for such a theory to be explained from the ground up.

There were two additional motivations for the work in this paper.  In thinking about coherence for monoidal functors, the first author was led to a general study of algebras for multicategories internal to $\mb{Cat}$.  These give rise to 2-monads (or perhaps pseudomonads, depending on how the theory is set up), and checking abstract properties of these 2-monads prompts one to consider the simpler case of operads in $\mb{Cat}$ instead of multicategories.  The other motivation was from the second author's attempt to understand the interplay between operads in $\mb{Cat}$, operads in $\mb{Top}$, and the passage from (bi)permutative categories to $E_{\infty}$ (ring) spaces.  The first of these motivations raised the issue of when operads in $\mb{Cat}$ are cartesian, while the second led us to consider when an operad in $\mb{Cat}$ possesses a pseudo-commutative structure.

While considering how to best tackle a general discussion of operads in $\mb{Cat}$, it became clear that restricting attention to the two most commonly used types of operads, symmetric and non-symmetric operads, was both short-sighted and unnecessary.  Many theorems apply to both kinds of operads at once, with the difference in proofs being negligible; in fact, most of the arguments which applied to the symmetric case seemed to apply to the case of braided operads as well.   This led us to the notion of an action operad $\mb{G}$, and then to a definition of $\mb{G}$-operads.  In essence, this is merely the general notion of what it means for an operad $P = \{ P(n) \}_{n \in \N}$ to have groups of equivariance $\mb{G} = \{ G(n) \}_{n \in \N}$ such that $G(n)$ acts on $P(n)$.  Choosing different natural families of groups $\mb{G}$, we recover known variants of the definition of operad. \\ \begin{center}
\begin{tabular}{c|c}
Groups $\mb{G}$ & Type of operad  \\ \hline
Terminal groups & Non-symmetric operad \\
Symmetric groups & Symmetric operad \\
Braid groups & Braided operad \\
\end{tabular} \\ \end{center}
These definitions have appeared, with minor variations, in two sources of which we are aware.  In Wahl's thesis \cite{wahl-thesis}, the essential definitions appear but not in complete generality as she requires a surjectivity condition.  Zhang \cite{zhang-grp} also studies these notions\footnote{Zhang calls our action operad a \textit{group operad}.  We dislike this terminology as it seems to imply that we are dealing with an operad in the category of groups, which is not the case unless all of the maps $\pi_{n}:G(n) \rightarrow \Sigma_{n}$ are zero maps.}, once again in the context of homotopy theory, but requires the  superfluous condition that $e_{1} = \textrm{id}$ (see Lemma \ref{calclem}).

This paper consists of the following.  In Section 1, we give the definition of an action operad $\mb{G}$ and a $\mb{G}$-operad.  We develop this definition abstractly so as to apply it in any suitable symmetric monoidal category.   It is standard to express operads as monoids in a particular functor category using a composition tensor product.  In order to show that our $\mb{G}$-operads fit into this philosophy, we must work abstractly and use the calculus of coends together with the Day convolution product \cite{day-thesis}.  The reader uninterested in these details can happily skip them, although we find the route taken here to be quite satisfactory in justifying the axioms for an action operad $\mb{G}$ and the accompanying notion of $\mb{G}$-operad.  Many of our calculations are generalizations of those appearing in work of Kelly \cite{kelly-op}, although there are slight differences in flavor between the two treatments.

Section 2 works through the basic 2-categorical aspects of operads in $\mb{Cat}$.  We explain how every operad gives rise to a 2-monad, and show that all of the various 1-cells between algebras of the associated 2-monad correspond to the obvious sorts of 1-cells one might define between algebras over an operad in $\mb{Cat}$.  Similarly, we show that the algebra 2-cells, using the 2-monadic approach, correspond to the obvious notion of transformation one would define using the operad.

Section 3 studies three basic 2-categorical properties of an operad, namely the property of being finitary, the property of being 2-cartesian, and the coherence property.  The first of these always holds, as a simple calculation shows.  The second of these turns out to be equivalent to the action of $G(n)$ on $P(n)$ being free for all $n$, at least up to a certain kernel.  In particular, our characterization clearly shows that every non-symmetric operad is 2-cartesian, and that a symmetric operad is 2-cartesian if and only if the symmetric group actions are all free.  (It is useful to note that a 2-monad on $\mb{Cat}$ is 2-cartesian if and only if the underlying monad on the category of small categories is cartesian in the usual sense as the (strict) 2-pullback of a diagram is the same as its pullback.)  The third property is also easily shown to hold for any $\mb{G}$-operad on $\mb{Cat}$ using a factorization system argument due to Power \cite{power-gen}.

Section 4 then goes on to study the question of when a $\mb{G}$-operad $P$ admits a pseudo-commutative structure.  Such a structure provides the 2-category of algebras with a richer structure that includes well-behaved notions of tensor product, internal hom, and multilinear map that fit together much as the analogous notions do in the category of vector spaces.  When $P$ is contractible (i.e., each $P(n)$ is equivalent to the terminal category), this structure can be obtained from a collection of elements $t_{m,n} \in G(mn)$ satisfying certain properties.  In particular, we show that every contractible symmetric operad is pseudo-commutative, and we prove that there exist such elements $t_{m,n} \in Br_{mn}$ so that every contractible braided operad is pseudo-commutative as well (in fact in two canonical ways).  Thus Section 4 can be seen as a continuation, in the operadic context, of the work in \cite{HP}, and in particular the ``geometric'' proof of the existence of a pseudo-commutative structure for braided strict monoidal categories demonstrates the power of being able to change the groups of equivariance.

The authors would like to thank John Bourke, Martin Hyland, Tom Leinster, and Peter May for various conversations which led to this paper.  While conducting this research, the second author was supported by an EPSRC Early Career Fellowship.

\section{Operads in symmetric monoidal categories}

In this section, we will explore the general definition of an operad $P$ which is equipped with groups of equivariance $G(n)$.  The group $G(n)$ will act on the right on the object $P(n)$, and the operad structure of $P$ will be required to respect this action.  For certain choices of the groups $G(n)$, we will recover standard notions of operads such as symmetric operads, non-symmetric operads, and braided operads.  The definitions here will, unless otherwise stated, apply in any symmetric monoidal category $\mathcal{V}$ in which the functors $X \otimes -, - \otimes X$ preserve colimits for every object $X \in \mathcal{V}$.

\begin{conv}
We adopt the following conventions throughout.
\begin{itemize}
\item $\Sigma_{n}$ is the symmetric group on $n$ letters, and $Br_{n}$ is the braid group on $n$ strands.
\item For a group $G$, a right $G$-action on a set $X$ will be denoted $(x,g) \mapsto x \cdot g$.  We will use both $\cdot$ and concatenation to represent multiplication in a group.
\item The symbol $e$ will generically represent an identity element in a group.  If we have a set of groups $\{ G(n) \}_{n \in \N}$ indexed by the natural numbers, then $e_{n}$ is the identity element in $G(n)$.
\item We will often be interested in elements of a product of the form
\[
A \times B_{1} \times \cdots \times B_{n} \times C
\]
(or similar, for example without $C$).  We will write elements of this set as $(a; b_{1}, \ldots, b_{n}; c)$, and in the case that we need equivalence classes of such elements they will be written as $[a; b_{1}, \ldots, b_{n}; c]$.
\end{itemize}
\end{conv}

We begin with the basic definitions.

\begin{Defi}
An \textit{operad} $O$ (in the category of sets) consists of
\begin{itemize}
\item sets $O(n)$ for each natural number $n$,
\item an element $\textrm{id} \in O(1)$, and
\item functions
\[
\mu: O(n) \times O(k_{1}) \times \cdots \times O(k_{n}) \rightarrow O(k_{1} + \cdots + k_{n}),
\]
\end{itemize}
satisfying the following two axioms.
\begin{enumerate}
\item The element $\textrm{id} \in O(1)$ is a two-sided unit for $\mu$ in the sense that
\[
\begin{array}{rcl}
\mu(\textrm{id}; x) & = & x \\
\mu(x; \textrm{id}, \ldots, \textrm{id}) & = & x
\end{array}
\]
for any $x \in O(n)$.
\item The functions $\mu$ (called operadic multiplication or operadic composition) are associative in the sense that the diagram below commutes.
\[
\xy
(0,0)*+{\scriptstyle O(n) \times O(k_{1}) \times \cdots \times O(k_{n}) \times O(l_{1,1}) \times \cdots \times O(l_{1},k_{1}) \times \cdots \times O(l_{n,1}) \times \cdots \times O(l_{n},k_{n})} ="00";
(0,-50)*+{\scriptstyle O(k_{1} + \cdots + k_{n}) \times O(l_{1,1}) \times \cdots \times O(l_{1},k_{1}) \times \cdots \times O(l_{n,1})\times \cdots \times O(l_{n},k_{n})} ="02";
(55,-10)*+{\scriptstyle O(n) \times \prod_{i=1}^n O(k_{1}) \times O(l_{i,1}) \times \cdots \times O(l_{i}, k_{i}) } ="20";
(55,-25)*+{\scriptstyle O(n) \times O(\sum l_{1,-}) \times \cdots \times O(\sum l_{n,-})} ="21";
(55, -40)*+{\scriptstyle  O(\sum l_{-,-})} ="22";
{\ar_{\scriptstyle \mu \times 1} "00" ; "02"};
{\ar_{\mu} "02" ; "22"};
{\ar^{\cong} "00" ; "20"};
{\ar^{1 \times \prod \mu} "20" ; "21"};
{\ar^{\mu} "21" ; "22"};
\endxy
\]
\end{enumerate}
\end{Defi}

\begin{rem}
\begin{enumerate}
\item One can change from operads in $\mb{Sets}$ to operads in another symmetric monoidal category $\mathcal{V}$ by requiring each $O(n)$ to be an object of $\mathcal{V}$ and replacing all instances of cartesian product with the appropriate tensor product in $\mathcal{V}$.  This includes replacing the element $\textrm{id} \in O(1)$ with a map $I \rightarrow O(1)$ from the unit object of $\mathcal{V}$ to $O(1)$.
\item Every operad has an underlying \textit{collection} which consists of the natural number-indexed set $\{ O(n) \}_{n \in \N}$, but without a chosen identity element or composition maps.  The category of collections is a presheaf category (in this case on the discrete category of natural numbers), and we will equip it with a monoidal structure in which monoids are precisely operads in Theorem \ref{operad=monoid}.
\end{enumerate}
\end{rem}

One is intended to think that $x \in O(n)$ is a function with $n$ inputs and a single output, as below.
\[
\xy
{\ar@{-} (0,0)*{}; (25,-10)*{} };
{\ar@{-} (0,-20)*{}; (25,-10)*{} };
{\ar@{-} (0,0)*{}; (0,-20)*{} };
{\ar@{-} (25,-10)*{}; (35,-10)*{} };
{\ar@{-} (0,0)*{}; (-10,0)*{} };
{\ar@{-} (0,-3)*{}; (-10,-3)*{} };
{\ar@{-} (0,-17)*{}; (-10,-17)*{} };
{\ar@{-} (0,-20)*{}; (-10,-20)*{} };
(11,-10)*{x}; (-5,-10)*{\vdots}
\endxy
\]
Operadic composition is then a generalization of function composition, with the pictorial representation below being $\mu(x; y_{1}, y_{2})$ for $\mu:O(2) \times O(2) \times O(3) \rightarrow O(5)$.
\[
\xy
{\ar@{-} (0,0)*{}; (25,-10)*{} };
{\ar@{-} (0,-20)*{}; (25,-10)*{} };
{\ar@{-} (0,0)*{}; (0,-20)*{} };
{\ar@{-} (25,-10)*{}; (35,-10)*{} };
{\ar@{-} (0,-3)*{}; (-10,-3)*{} };
{\ar@{-} (0,-17)*{}; (-10,-17)*{} };
(11,-10)*{x};
{\ar@{-} (-25,2)*{}; (-10,-3)*{} };
{\ar@{-} (-25,-8)*{}; (-10,-3)*{} };
{\ar@{-} (-25,2)*{}; (-25,-8)*{} };
{\ar@{-} (-25,1)*{}; (-30,1)*{} };
{\ar@{-} (-30,-7)*{}; (-25,-7)*{} };
(-19,-3)*{y_{1}};
{\ar@{-} (-25,-12)*{}; (-10,-17)*{} };
{\ar@{-} (-25,-22)*{}; (-10,-17)*{} };
{\ar@{-} (-25,-12)*{}; (-25,-22)*{} };
{\ar@{-} (-25,-13)*{}; (-30,-13)*{} };
{\ar@{-} (-25,-17)*{}; (-30,-17)*{} };
{\ar@{-} (-25,-21)*{}; (-30,-21)*{} };
(-19,-17)*{y_{2}};
\endxy
\]

\begin{example}
The canonical example of an operad is the \textit{symmetric operad} which we write as $\Sigma$.  The set $\Sigma(n)$ is the set of elements of the symmetric group $\Sigma_{n}$.  The identity element $\textrm{id} \in \Sigma(1)$ is just the identity permutation on a one-element set.  Operadic composition in $\Sigma$ will then be given by a function
\[
\Sigma(n) \times \Sigma(k_{1}) \times \cdots \times \Sigma(k_{n}) \rightarrow \Sigma(k_{1} + \cdots + k_{n})
\]
which takes permutations $\sigma \in \Sigma_{n}, \tau_{i} \in \Sigma_{k_{i}}$ and produces the following permutation in $\Sigma_{k_{1} + \cdots + k_{n}}$.  First we form the block sum permutation $\tau_{1} \oplus \cdots \oplus \tau_{n}$ which permutes the first $k_{1}$ elements according to $\tau_{1}$, the next $k_{2}$ elements according to $\tau_{2}$ and so on; this is an element of $\Sigma_{k_{1} + \cdots + k_{n}}$.  Then we take the permutation $\sigma^+ \in \Sigma_{k_{1} + \cdots + k_{n}}$ which permutes the $n$ different blocks $1$ through $k_{1}$, $k_{1}+1$ through $k_{1} + k_{2}$, and so on, according to the permutation $\sigma \in \Sigma_{n}$.  Operadic composition in $\Sigma$ is then given by the formula
\[
\mu(\sigma; \tau_{1}, \ldots, \tau_{n}) = \sigma^+ \cdot (\tau_{1} \oplus \cdots \oplus \tau_{n}).
\]
Below we have drawn the permutation for the composition
\[
\mu:\Sigma(3) \times \Sigma(2) \times \Sigma(4) \times \Sigma(3) \rightarrow \Sigma(9)
\]
evaluated on the element $\Big( (123); (12), (12)(34), (13) \Big)$.
\[
\xy
{\ar@{-} (0,0)*{}; (5,-5)*{} };
{\ar@{-} (5,0)*{}; (0,-5)*{} };
{\ar@{-} (12,0)*{}; (17,-5)*{} };
{\ar@{-} (17,0)*{}; (12,-5)*{} };
{\ar@{-} (22,0)*{}; (27,-5)*{} };
{\ar@{-} (27,0)*{}; (22,-5)*{} };
{\ar@{-} (34,0)*{}; (44,-5)*{} };
{\ar@{-} (39,0)*{}; (39,-5)*{} };
{\ar@{-} (44,0)*{}; (34,-5)*{} };
{\ar@{-} (0,-5)*{}; (17,-13)*{} };
{\ar@{-} (5,-5)*{}; (22,-13)*{} };
{\ar@{-} (12,-5)*{}; (29,-13)*{} };
{\ar@{-} (17,-5)*{}; (34,-13)*{} };
{\ar@{-} (22,-5)*{}; (39,-13)*{} };
{\ar@{-} (27,-5)*{}; (44,-13)*{} };
{\ar@{-} (34,-5)*{}; (0,-13)*{} };
{\ar@{-} (39,-5)*{}; (5,-13)*{} };
{\ar@{-} (44,-5)*{}; (10,-13)*{} };
\endxy
\]
Note that $(12)(34) \in \Sigma(4)$ is actually $\mu(e_{2}; (12), (12))$, where $e_{2} \in \Sigma_{2}$ is the identity permutation.  Using this and operad associativity, one can easily check that
\[
\mu \Big( (123); (12), (12)(34), (13) \Big) = \mu \Big( (1234); (12), (12), (12), (13) \Big),
\]
where now the composition on the right side uses the function
\[
\mu:\Sigma(4) \times \Sigma(2) \times \Sigma(2) \times \Sigma(2) \times \Sigma(3) \rightarrow \Sigma(9).
\]
This equality is obvious using the picture above, but verifiable directly using only the algebra of the symmetric operad.
\end{example}

The definition we have given above is for what some might call a \textit{plain} or \textit{non-symmetric operad}.  In many applications, something more sophisticated is required.

\begin{Defi}
A \textit{symmetric operad} consists of
\begin{itemize}
\item an operad $O$ and
\item for each $n$, a right $\Sigma_{n}$-action on $O(n)$,
\end{itemize}
satisfying the following axioms.
\[
\begin{array}{rcl}
\mu(x; y_{1} \cdot \tau_{1}, \ldots, y_{n} \cdot \tau_{n}) & = & \mu(x; y_{1}, \ldots, y_{n}) \cdot (\tau_{1} \oplus \cdots \oplus \tau_{n}) \\
\mu(x \cdot \sigma; y_{1}, \ldots, y_{n}) & =  & \mu(x; y_{\sigma^{-1}(1)}, \ldots, y_{\sigma^{-1}(n)}) \cdot \sigma^{+}
\end{array}
\]
For the above equations to make sense, we must have
\begin{itemize}
\item $x \in O(n)$,
\item $y_{i} \in O(k_{i})$ for $i=1, \ldots, n$,
\item $\tau_{i} \in \Sigma_{k_{i}}$, and
\item $\sigma \in \Sigma_{n}$.
\end{itemize}
\end{Defi}

The operad $\Sigma$ detailed above is a symmetric operad, with the symmetric group action being given by right multiplication.  We leave it to the reader to check the axioms, but in each case they are entirely straightforward.  In the original topological applications \cite{maygeom}, symmetric operads were the central figures while plain operads were generally not as useful.  A further kind of operad was studied by Fiedorowicz in \cite{fie-br}; we give the definition below in analogy with that for symmetric operads, with interpretation to follow afterwards to make it entirely rigorous.  We do this to emphasize the key features that we will generalize in Definition \ref{aoperad}.

\begin{Defi}\label{broperad}
A \textit{braided operad} consists of
\begin{itemize}
\item an operad $O$ and
\item for each $n$, a right action of the $n$th braid group $Br_{n}$ on $O(n)$,
\end{itemize}
satisfying the following axioms.
\[
\begin{array}{rcl}
\mu(x; y_{1} \cdot \tau_{1}, \ldots, y_{n} \cdot \tau_{n}) & = & \mu(x; y_{1}, \ldots, y_{n}) \cdot (\tau_{1} \oplus \cdots \oplus \tau_{n}) \\
\mu(x \cdot \sigma; y_{1}, \ldots, y_{n}) & =  & \mu(x; y_{\sigma^{-1}(1)}, \ldots, y_{\sigma^{-1}(n)}) \cdot \sigma^{+}
\end{array}
\]
For the above equations to make sense, we must have
\begin{itemize}
\item $x \in O(n)$,
\item $y_{i} \in O(k_{i})$ for $i=1, \ldots, n$,
\item $\tau_{i} \in Br_{k_{i}}$, and
\item $\sigma \in Br_{n}$.
\end{itemize}
\end{Defi}

In order to make sense of this definition, we must define $\tau_{1} \oplus \cdots \oplus \tau_{n}$ and $\sigma^{+}$ in the context of braids.  The first is the block sum in the obvious sense:  given $n$ different braids on $k_{1}, \ldots, k_{n}$ strands, respectively, we form a new braid on $k_{1} + \cdots + k_{n}$ strands by taking a disjoint union where the braid $\tau_{i}$ is to the left of $\tau_{j}$ if $i < j$.  The braid $\sigma^{+}$ is obtained by replacing the $i$th strand with $k_{i}$ consecutive strands, all of which are braided together according to $\sigma$.  Finally, the notation $\sigma^{-1}(i)$ should be read as $\pi(\sigma)^{-1}(i)$, where $\pi:Br_{n} \rightarrow \Sigma_{n}$ is the underlying permutation map.

We require one final preparatory definition.

\begin{Defi}
Let $O,O'$ be operads.  Then an \textit{operad map} $f:O \rightarrow O'$ consists of functions $f_{n}:O(n) \rightarrow O'(n)$ for each natural number such that the following axioms hold.
\[
\begin{array}{rcl}
f(\textrm{id}_{O}) & = & \textrm{id}_{O'} \\
f(\mu^{O}(x; y_{1}, \ldots, y_{n})) & = & \mu^{O'}(f(x); f(y_{1}), \ldots, f(y_{n}))
\end{array}
\]
\end{Defi}

\begin{conv}
In the above definition and below, we adopt the convention that if an equation requires using operadic composition in more than one operad, we will indicate this by a superscript on each instance of $\mu$ unless it is entirely clear from context.
\end{conv}

\begin{example}
One can form an operad $Br$ where $Br(n)$ is the underlying set of the $n$th braid group, $Br_{n}$.  This is done in much the same way as we did for the symmetric operad, and the collection of maps $\pi_{n}:Br_{n} \rightarrow \Sigma_{n}$ giving the underlying permutations constitutes an operad map $Br \rightarrow \Sigma$.
\end{example}

One should note that the axioms for symmetric and braided operads each use the fact that the groups of equivariance themselves form an operad.  This is what we call an action operad.

\begin{Defi}\label{aoperad}
An \textit{action operad} $\mb{G}$ consists of
\begin{itemize}
\item an operad $G = \{ G(n) \}_{n \in \N}$ in the category of sets such that each $G(n)$ is equipped with the structure of a group and
\item a map $\pi:G \rightarrow \Sigma$ which is simultaneously a map of operads and a group homomorphism $\pi_{n}:G(n) \rightarrow \Sigma_{n}$ for each $n$
\end{itemize}
such that
\[
\mu(g; f_1, \ldots f_n)  \mu(g'; f_1', \ldots, f_n') = \mu (gg'; f_{\pi(g')(1)}f_{1}', \ldots, f_{\pi(g')(n)}f_{n}')
\]
holds in the group $G(k_{1} + \cdots + k_{n})$, provided both sides make sense.  This occurs precisely when
\begin{itemize}
\item $g, g' \in G(n)$,
\item $f_{i} \in G(k_{\pi(g')^{-1}(i)})$, and
\item $f_{i}' \in G(k_{i})$.
\end{itemize}
\end{Defi}

\begin{rem}
\begin{itemize}
\item The final axiom is best explained using the operad $\Sigma$ of symmetric groups.  Reading symmetric group elements as permutations from top to bottom, below is a pictorial representation of the final axiom for the map $\mu:\Sigma_{3} \times \Sigma_{2} \times \Sigma_{2} \times \Sigma_{2} \rightarrow \Sigma_{6}.$
\[
\xy
{\ar@{-} (0,0)*{}; (5,-5)*{} };
{\ar@{-} (5,-5)*{}; (29,-10)*{} };
{\ar@{-} (5,0)*{}; (0,-5)*{} };
{\ar@{-} (0,-5)*{}; (24,-10)*{} };
{\ar@{-} (12,0)*{}; (12,-5)*{} };
{\ar@{-} (12,-5)*{}; (0,-10)*{} };
{\ar@{-} (17,0)*{}; (17,-5)*{} };
{\ar@{-} (17,-5)*{}; (5,-10)*{} };
{\ar@{-} (24,0)*{}; (29,-5)*{} };
{\ar@{-} (29,-5)*{}; (17,-10)*{} };
{\ar@{-} (29,0)*{}; (24,-5)*{} };
{\ar@{-} (24,-5)*{}; (12,-10)*{} };
{\ar@{-} (0,-10)*{}; (5,-15)*{} };
{\ar@{-} (5,-10)*{}; (0,-15)*{} };
{\ar@{-} (12,-10)*{}; (17,-15)*{} };
{\ar@{-} (17,-10)*{}; (12,-15)*{} };
{\ar@{-} (24,-10)*{}; (24,-15)*{} };
{\ar@{-} (29,-10)*{}; (29,-15)*{} };
{\ar@{-} (0,-15)*{}; (0,-20)*{} };
{\ar@{-} (5,-15)*{}; (5,-20)*{} };
{\ar@{-} (12,-15)*{}; (24,-20)*{} };
{\ar@{-} (17,-15)*{}; (29,-20)*{} };
{\ar@{-} (24,-15)*{}; (12,-20)*{} };
{\ar@{-} (29,-15)*{}; (17,-20)*{} };
{\ar@{-} (40,0)*{}; (45,-5)*{} };
{\ar@{-} (45,0)*{}; (40,-5)*{} };
{\ar@{-} (52,0)*{}; (52,-5)*{} };
{\ar@{-} (57,0)*{}; (57,-5)*{} };
{\ar@{-} (64,0)*{}; (69,-5)*{} };
{\ar@{-} (69,0)*{}; (64,-5)*{} };
{\ar@{-} (40,-5)*{}; (40,-10)*{} };
{\ar@{-} (45,-5)*{}; (45,-10)*{} };
{\ar@{-} (52,-5)*{}; (57,-10)*{} };
{\ar@{-} (57,-5)*{}; (52,-10)*{} };
{\ar@{-} (64,-5)*{}; (69,-10)*{} };
{\ar@{-} (69,-5)*{}; (64,-10)*{} };
{\ar@{-} (40,-10)*{}; (64,-15)*{} };
{\ar@{-} (45,-10)*{}; (69,-15)*{} };
{\ar@{-} (52,-10)*{}; (40,-15)*{} };
{\ar@{-} (57,-10)*{}; (45,-15)*{} };
{\ar@{-} (64,-10)*{}; (52,-15)*{} };
{\ar@{-} (69,-10)*{}; (57,-15)*{} };
{\ar@{-} (40,-15)*{}; (40,-20)*{} };
{\ar@{-} (45,-15)*{}; (45,-20)*{} };
{\ar@{-} (52,-15)*{}; (64,-20)*{} };
{\ar@{-} (57,-15)*{}; (69,-20)*{} };
{\ar@{-} (64,-15)*{}; (52,-20)*{} };
{\ar@{-} (69,-15)*{}; (57,-20)*{} };
(34.5,-10)*+{=};
(4.5,-25)*{\scriptstyle \mu\big((23);(12),(12), \textrm{id}\big) \cdot \mu\big((132); (12), \textrm{id}, (12)\big) };
(64.5,-25)*{\scriptstyle \mu\big((23)\cdot (132); \textrm{id} \cdot (12), (12) \cdot \textrm{id}, (12) \cdot (12)\big)} ;
\endxy
\]
\item Our definition of an action operad is the same as that appearing in Wahl's thesis \cite{wahl-thesis}, but without the condition that each $\pi_{n}$ is surjective.  It is also the same as that appearing in work of Zhang \cite{zhang-grp}, although we prove later (see Lemma \ref{calclem}) that the condition $e_{1} = \textrm{id}$ in Zhang's definition follows from the rest of the axioms.
\end{itemize}
\end{rem}

\begin{example}
\begin{enumerate}
\item There are two trivial examples of action operads.  The first is the symmetric operad  $\mb{G} = \mb{\Sigma}$ with the identity map; this is the terminal object in the category of action operads (see Definition \ref{mapaop}).  The second is $\mb{G} = \mb{T}$ consisting of the terminal groups $T(n) = *$.  Here the $\pi_{n}$'s are given by the inclusion of identity elements; this is the initial object in the category of action operads.
\item Two less trivial examples are given by the braid groups, $\mb{G} = \mb{Br}$, and the ribbon braid groups, $\mb{G} = \mb{RBr}$.  (A ribbon braid is given, geometrically, as a braid with strands replaced by ribbons in which we allow full twists.  The actual definition of the ribbon braid groups is as the fundamental group of a configuration space in which points have labels in the circle, $S^{1}$; see \cite{sal-wahl}.)  In each case, the homomorphism $\pi$ is given by taking underlying permutations, and the operad structure is given geometrically by using the procedure explained after Definition \ref{broperad}.  We refer the reader to \cite{fie-br} for more information about braided operads, and to \cite{sal-wahl, wahl-thesis} for information about the ribbon case.
\end{enumerate}
\end{example}

Action operads are themselves the objects of a category, $\mb{AOp}$.  The morphisms of this category are defined below.
\begin{Defi}\label{mapaop}
A \textit{map of action operads} $f: \mb{G} \rightarrow \mb{G}'$ consists of a map $f:G \rightarrow G'$ of the underlying operads such that
\begin{enumerate}
\item $\pi^{G'} \circ f = \pi^{G}$ (i.e., $f$ is a map of operads over $\Sigma$) and
\item each $f_{n}:G(n) \rightarrow G'(n)$ is a group homomorphism.
\end{enumerate}
\end{Defi}


Just as we had the definitions of operad, symmetric operad, and braided operad, we now come to the general definition of a $\mb{G}$-operad, where $\mb{G}$ is an action operad.

\begin{Defi}
Let $\mb{G}$ be an action operad.  A \textit{$\mb{G}$-operad} $P$ (in $\mb{Sets}$) consists of
\begin{itemize}
\item an operad $P$ in $\mb{Sets}$ and
\item for each $n$, an action $P(n) \times G(n) \rightarrow P(n)$ of $G(n)$ on $P(n)$
\end{itemize}
such that the following two equivariance axioms hold.
\[
\begin{array}{c}
\mu^{P}(x; y_{1} \cdot g_{1}, \ldots, y_{n} \cdot g_{n}) =\mu^{P}(x; y_{1}, \ldots, y_{n}) \cdot \mu^{G}(e; g_{1}, \ldots, g_{n})  \\
\mu^{P}(x \cdot g; y_{1}, \ldots, y_{n})  =  \mu^{P}(x; y_{\pi(g)^{-1}(1)}, \ldots, y_{\pi(g)^{-1}(n)}) \cdot \mu^{G}(g; e_{1}, \ldots, e_{n})
\end{array}
\]
\end{Defi}

\begin{example}
\begin{enumerate}
\item Let $\mb{T}$ denote the terminal operad in $\mb{Sets}$ equipped with its unique action operad structure.  Then a $\mb{T}$-operad is just a non-symmetric operad in $\mb{Sets}$.
\item Let $\mb{\Sigma}$ denote the operad of symmetric groups with $\pi:\Sigma \rightarrow \Sigma$ the identity map.  Then a $\mb{\Sigma}$-operad is a symmetric operad in the category of sets.
\item Let $\mb{Br}$ denote the operad of braid groups with $\pi_{n}:Br_{n} \rightarrow \Sigma_{n}$ the canonical projection of a braid onto its underlying permutation.  Then a $\mb{Br}$-operad is a braided operad in the sense of Fiedorowicz \cite{fie-br}.
\end{enumerate}
\end{example}

\begin{rem}
It is possible to consider $\mb{G}$-operads in categories other than the category of sets.  In this case we still use the notion of an action operad given above, but then take the operad $P$ to have objects $P(n)$ which are the objects of some closed symmetric monoidal category $\mathcal{V}$.  We will rarely use anything that might require the closed structure as such, only the fact that the tensor product distributes over colimits in each variable.  This is a consequence of the fact that both $X \otimes -$ and $- \otimes X$ are left adjoints in the case of a closed symmetric monoidal category.  Thus while we set up the foundations using only operads in $\mb{Sets}$, the diligent reader can easily modify this theory for their closed symmetric monoidal category of choice.  In fact, we will use the same theory in $\mb{Cat}$ with its cartesian structure, noting only that the same arguments work in $\mb{Cat}$ with essentially no modification.
\end{rem}

\begin{Defi}
Let $\mb{G}$ be an action operad.  The category $\mb{G\mbox{-}Coll}$ of $\mb{G}$-collections has objects $X = \{ X(n) \}_{n \in \N}$ which consist of a set $X(n)$ for each natural number $n$ together with an action $X(n) \times G(n) \rightarrow X(n)$ of $G(n)$ on $X(n)$.  A morphism $f:X \rightarrow Y$ in $\mb{G\mbox{-}Coll}$ consists of a $G(n)$-equivariant map $f_{n}:X(n) \rightarrow Y(n)$ for each natural number $n$.
\end{Defi}

\begin{rem}
The definition of $\mb{G\mbox{-}Coll}$ does not require that $\mb{G}$ be an action operad, only that one has a natural number-indexed set of groups.  Given any such collection of groups $\{ G(n) \}_{n \in \N}$, we can form the category $\mathbb{G}$ whose objects are natural numbers and whose hom-sets are given by $\mathbb{G}(m,n) = \emptyset$ if $m \neq n$ and $\mathbb{G}(n,n) = G(n)$ (where composition and units are given by group multiplication and identity elements, respectively).  Then $\mb{G\mbox{-}Coll}$ is the presheaf category
\[
\hat{\mathbb{G}} = [\mathbb{G}^{\textrm{op}}, \mb{Sets}],
 \]
with the opposite category arising from our choice of right actions.  A key step in explaining how $\mb{G}$-operads arise as monoids in the category of $\mb{G}$-collections is to show that being an action operad endows $\mathbb{G}$ with a monoidal structure.
\end{rem}

\begin{Defi}
Let $\mb{G}$ be an action operad, and let $X, Y$ be $\mb{G}$-collections.  We define the $\mb{G}$-collection $X \circ Y$ to be
\[
X \circ Y (n) = \Big( \coprod_{k_{1} + \cdots + k_{r} = n} X(r) \times Y(k_{1}) \times \cdots \times Y(k_{r}) \Big) \times G(n) / \sim
\]
where the equivalence relation is generated by
\[
\begin{array}{rcl}
(xh; y_{1}, \ldots, y_{r}; g) & \sim & (x; y_{\pi(h)^{-1}(1)}, \ldots, y_{\pi(h)^{-1}(r)}; \mu(h; e, \ldots, e)g), \\
(x; y_{1}, \ldots, y_{r}; \mu(e; g_{1}, \ldots, g_{r})g) & \sim & (xe; y_{1}g_{1}, \ldots, y_{r}g_{r}; g).
\end{array}
\]
For the first relation above, we must have that the lefthand side is an element of
\[
X(r) \times Y(k_1) \times \cdots \times Y(k_r) \times G(n)\]
while the righthand side is an element of
\[
X(r) \times Y(k_{\pi(h)^{-1}(1)}) \times \cdots \times Y(k_{\pi(h)^{-1}(r)}) \times G(n);
\]
for the second relation, we must have $x \in X(r)$, $y_{i} \in Y(k_{i})$, $f \in G(r)$, $g_{i} \in G(k_{i})$, and $g \in G(n)$.  The right $G(n)$-action on $X \circ Y(n)$ is given by multiplication on the final coordinate.
\end{Defi}

We will now develop the tools to prove that the category $\mb{G\mbox{-}Coll}$ has a monoidal structure given by $\circ$, and that operads are the monoids therein.

\begin{thm}\label{operad=monoid}
Let $\mb{G}$ be an action operad.
\begin{enumerate}
\item The category $\mb{G\mbox{-}Coll}$ has a monoidal structure with tensor product given by $\circ$ and unit given by the collection $I$ with $I(n) = \emptyset$ when $n \neq 1$, and $I(1) = G(1)$ with the $\mb{G}$-action given by multiplication on the right.
\item The category $\mb{Mon}(\mb{G\mbox{-}Coll})$ of monoids in $\mb{G\mbox{-}Coll}$ is equivalent to the category of $\mb{G}$-operads with morphisms being those operad maps $P \rightarrow Q$ for which each $P(n) \rightarrow Q(n)$ is $G(n)$-equivariant.
\end{enumerate}
\end{thm}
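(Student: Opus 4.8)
The plan is to realize $(\mb{G\mbox{-}Coll}, \circ)$ inside the strictly associative monoidal category of endofunctors of $\hat{\mathbb{G}}$, so that the coherence data for $\circ$ is inherited from composition of functors, and then to read off part (2) from the universal property of the coend defining $\circ$. Throughout I use, as in the preceding remark, that $\mb{G\mbox{-}Coll} = \hat{\mathbb{G}} = [\mathbb{G}^{\mathrm{op}}, \mb{Sets}]$.

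First I would give $\mathbb{G}$ a strict monoidal structure: on objects $m \otimes n = m+n$, and on morphisms the block sum $g \oplus g' := \mu^{G}(e_2; g, g') \in G(m+n)$ for $g \in G(m),\ g' \in G(n)$. Functoriality $(g \oplus g')(h \oplus h') = (gh)\oplus(g'h')$ is exactly the interchange axiom of Definition \ref{aoperad} with $e_2 \in G(2)$, while strict associativity and unitality of $\otimes$ follow from the corresponding operad axioms for $G$ (strict unitality needing $e_1 = \mathrm{id}$, which is Lemma \ref{calclem}). Day convolution \cite{day-thesis} then makes $\hat{\mathbb{G}}$ monoidal. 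The key additional observation is that each Day tensor power $Y^{\otimes_{\mathrm{Day}} r}$ carries a right $G(r)$-action: an $h \in G(r)$ permutes the $r$ factors through $\pi(h)$ and corrects the bookkeeping by the block element $\mu^{G}(h; e, \dots, e)$, and the action operad axioms guarantee this is a well-defined action. Hence $r \mapsto Y^{\otimes_{\mathrm{Day}} r}$ is a functor $\mathbb{G} \to \hat{\mathbb{G}}$, and I define the composition product by the coend
\[
(X \circ Y)(n) = \int^{r \in \mathbb{G}} X(r) \times \big(Y^{\otimes_{\mathrm{Day}} r}\big)(n).
\]
Unwinding this coend reproduces precisely the quotient in the definition of $X \circ Y$: the coend over the groupoid $\mathbb{G}$ is a quotient by the $G(r)$-action on $X(r)$, giving the first generating relation, while expanding each $Y^{\otimes_{\mathrm{Day}} r}(n)$ as the iterated Day coend $\int^{k_1,\dots,k_r} Y(k_1)\times\cdots\times Y(k_r)\times \mathbb{G}(n, \textstyle\sum k_i)$ gives the second (with $\mu^{G}(e; g_1,\dots,g_r)$ the block sum appearing there). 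The unit laws are then immediate: in $I \circ Y$ only the $r=1$ summand survives since $I = y(1)$ is supported in arity $1$, yielding $I \circ Y \cong Y$; and since $y$ is strong monoidal, $I^{\otimes_{\mathrm{Day}} r} = y(1)^{\otimes_{\mathrm{Day}} r} \cong y(r)$, so $(X \circ I)(n) = \int^{r} X(r)\times \mathbb{G}(n,r) \cong X(n)$ by co-Yoneda. This identifies $I = y(1)$ as the required unit.

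For associativity I would pass to the endofunctors $\widehat{X} := X \circ (-)$ on $\hat{\mathbb{G}}$ and establish a natural isomorphism $\widehat{X \circ Y} \cong \widehat{X}\circ\widehat{Y}$, equivalently $(X \circ Y)\circ Z \cong X \circ (Y \circ Z)$. This is a Fubini-style interchange of iterated coends in which one commutes the coend over $r$ past the Day coends and uses the $G(r)$-actions on Day powers to match the two quotients on either side. \textbf{This coend bookkeeping — above all, verifying that the generating relations on the two sides correspond after the interchange — is the step I expect to be the main obstacle}; it is nonetheless a direct generalization of Kelly's calculations \cite{kelly-op}, with the action operad axioms replacing the manipulations special to symmetric groups. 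Because $\widehat{X}(I) \cong X$, the assignment $X \mapsto \widehat X$ is faithful and detects the monoidal data, so the associator together with the pentagon and triangle coherences descend from the strict associativity of functor composition, completing part (1).

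For part (2), a monoid structure on a collection $X$ consists of $m : X \circ X \to X$ and $u : I \to X$. Since $I$ is supported in arity $1$ with $I(1) = G(1)$, equivariance forces $u$ to be determined by the single element $\mathrm{id} := u(e_1) \in X(1)$, which is the operad identity. By the universal property of the coend defining $X \circ X$, a $G(n)$-equivariant map $m$ is the same as a family of functions $X(r)\times X(k_1)\times\cdots\times X(k_r) \to X(k_1 + \cdots + k_r)$ that respects the two generating relations; respecting them is verbatim the two equivariance axioms in the definition of a $\mb{G}$-operad, so $m$ is exactly an operadic multiplication $\mu^{X}$. The monoid associativity and unit axioms then become the operad associativity and two-sided unit axioms, and a morphism of monoids is precisely an operad map that is $G(n)$-equivariant in each arity. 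This gives the asserted equivalence (indeed an isomorphism) between $\mb{Mon}(\mb{G\mbox{-}Coll})$ and the category of $\mb{G}$-operads.
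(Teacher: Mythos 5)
Your proposal is, in its substantive steps, the same proof as the paper's: the strict monoidal structure on $\mathbb{G}$ via $g \oplus g' = \mu^{G}(e_{2}; g, g')$ is Proposition \ref{Gmonoidal}; the action of $G(r)$ on the Day powers $Y^{\star r}$ (permute the factors by $\pi(h)$, correct by left multiplication by $\mu^{G}(h; e, \ldots, e)$) is Lemma \ref{calclem2}; the coend formula $(X \circ Y)(n) = \int^{r} X(r) \times Y^{\star r}(n)$, the unit isomorphisms via (co-)Yoneda with $I = \mathbb{G}(-,1)$, and the identification of monoids with $\mb{G}$-operads through the universal property of the coend (a map $I \to X$ is an element of $X(1)$, a map $X \circ X \to X$ is an equivariant family of multiplications) are all exactly the paper's arguments. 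The step you flag as the main obstacle is also carried out in the paper: it constructs the isomorphism $(Y \circ Z)^{\star l} \cong \int^{m} Y^{\star l}(m) \times Z^{\star m}$ by induction on $l$, using the Fubini theorem for coends and distributivity of products over colimits; your ``Fubini-style interchange'' is the right method, though you leave it unexecuted.

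The one point where you genuinely depart from the paper is the verification of the monoidal category axioms, and there your argument is circular. You define the associator as the component at $Z$ of the comparison $m_{X,Y} \colon \widehat{X \circ Y} \cong \widehat{X}\widehat{Y}$, so $a_{X,Y,Z} = (m_{X,Y})_{Z}$, and assert that the pentagon and triangle ``descend from the strict associativity of functor composition'' because $X \mapsto \widehat{X}$ is faithful. Faithfulness (which does follow from $\widehat{X}(I) \cong X$) only lets you reflect equations along the embedding; to import the strictness of $\mathrm{End}(\hat{\mathbb{G}})$ you would need the comparisons $m$ to be coherent, namely
\[
(m_{X,Y} \ast 1_{\widehat{Z}}) \cdot m_{X \circ Y, Z} \;=\; (1_{\widehat{X}} \ast m_{Y,Z}) \cdot m_{X, Y \circ Z} \cdot \widehat{a_{X,Y,Z}},
\]
where $\ast$ is whiskering and $\cdot$ is vertical composition. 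But the component of this equation at an object $W$ is precisely the pentagon for $(X,Y,Z,W)$: the left side has component $a_{X,Y,Z\circ W} \cdot a_{X\circ Y,Z,W}$ and the right side has component $(1_{X} \circ a_{Y,Z,W}) \cdot a_{X,Y\circ Z,W} \cdot (a_{X,Y,Z} \circ 1_{W})$, and an equation of natural transformations holds iff it holds componentwise. So the coherence of $m$ \emph{is} the pentagon; strictness of composition has bought you nothing. A genuine transport-of-structure argument would instead require the embedding $X \mapsto X \circ (-)$ to be \emph{full} (so that the associator could be \emph{defined} as the preimage of a composite of $m$'s, after which coherence is automatic), and fullness is a Joyal-type statement about natural transformations between substitution functors that evaluation at $I$ does not provide. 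The paper closes this step differently: the unit and associativity isomorphisms are induced, through the universal property of the coend, by the canonical isomorphisms of $\mb{Sets}$ (associativity, unit, and distributivity of products over colimits), so the monoidal axioms for $\circ$ reduce to the corresponding axioms in $\mb{Sets}$ plus the uniqueness clause of the universal property. Either adopt that argument, or prove the displayed coherence of $m$ directly by the same coend manipulations used to construct it. Part (2) of your proposal is unaffected by this issue and is correct as written.
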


While this theorem can be proven by direct calculation using the equivalence relation given above, such a proof is unenlightening.  Furthermore, we want to consider $\mb{G}$-operads in categories other than sets, so an element-wise proof might not apply.  Instead we now develop some general machinery that will apply to $\mb{G}$-operads in any cocomplete symmetric monoidal category in which each of  the functors $X \otimes -, - \otimes X$ preserve colimits (as is the case if the monoidal structure is closed).  This theory also demonstrates the importance of the final axiom in the definition of an action operad.  We begin with a calculational lemma.

\begin{lem}\label{calclem}
Let $\mb{G}$ be an action operad, and write $e_{n}$ for the unit element in the group $G(n)$.
\begin{enumerate}
\item In $G(1)$, the unit element $e_{1}$ for the group structure is equal to the identity element for the operad structure, $\textrm{id}$.
\item The equation
\[
\mu(e_{n}; e_{i_{1}}, \ldots, e_{i_{n}}) = e_{I}
\]
holds for any natural numbers $n, i_{j}, I = \sum i_{j}$.
\item The group $G(1)$ is abelian.
\end{enumerate}
\end{lem}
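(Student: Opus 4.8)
The plan is to deduce all three parts from the single multiplication axiom of Definition~\ref{aoperad} together with the operadic unit laws, the recurring device being that an idempotent element of a group must be the group unit. The one point that requires care is to keep a sharp distinction, inside $G(1)$, between the operadic identity $\textrm{id}$ and the group unit $e_{1}$: these are \emph{a priori} different elements, and showing they coincide is exactly part (1). I regard part (1) as the crux and the main obstacle; once it is available, parts (2) and (3) are essentially formal.

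For part (1) I would specialise the multiplication axiom to $n = 1$ and $k_{1} = 1$, so that every element lives in $G(1)$ and the permutation $\pi(g')$ is forced to be trivial, giving
\[
\mu(g; f)\,\mu(g'; f') = \mu(gg'; ff').
\]
Taking $g = g' = \textrm{id}$, $f' = \textrm{id}$, and $f = x$ arbitrary, and applying the unit laws $\mu(\textrm{id}; x) = x$ and $\mu(\textrm{id}; \textrm{id}) = \textrm{id}$ to the left-hand side, I obtain $\mu(\textrm{id}\cdot\textrm{id}; x\cdot\textrm{id}) = x\cdot\textrm{id}$. Since $x \mapsto x\cdot\textrm{id}$ is a bijection of $G(1)$, this says $\mu(\textrm{id}\cdot\textrm{id}; z) = z$ for every $z$; setting $z = \textrm{id}$ and invoking the other unit law $\mu(w; \textrm{id}) = w$ yields $\textrm{id}\cdot\textrm{id} = \textrm{id}$. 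Thus $\textrm{id}$ is idempotent in the group $G(1)$, hence $\textrm{id} = e_{1}$.

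Part (2) is the same idempotency trick one dimension up, and is independent of part (1). Writing $w = \mu(e_{n}; e_{i_{1}}, \ldots, e_{i_{n}}) \in G(I)$, I would feed $g = g' = e_{n}$ and $f_{j} = f_{j}' = e_{i_{j}}$ into the multiplication axiom. Because $\pi$ is a group homomorphism, $\pi(e_{n})$ is the identity permutation, so no reindexing occurs on the right-hand side; using $e_{n}e_{n} = e_{n}$ and $e_{i_{j}}e_{i_{j}} = e_{i_{j}}$ the axiom collapses to $w\cdot w = w$, whence $w = e_{I}$.

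For part (3) I would observe that, with $n = k_{1} = 1$, the multiplication axiom is precisely the interchange law relating the operadic composition $\mu$ on the set $G(1)$ to the group multiplication. By the unit laws $\textrm{id}$ is a two-sided unit for $\mu$, and by part (1) this is the same element as the group unit $e_{1}$; hence the two binary operations on $G(1)$ share a unit and satisfy interchange. The Eckmann--Hilton argument then forces them to coincide and to be commutative, so in particular $G(1)$ is abelian. (Equivalently one can argue by hand: interchange together with the shared unit gives $\mu(a;b) = ab$, and then $(gf)(g'f') = (gg')(ff')$ cancels to $fg' = g'f$.)
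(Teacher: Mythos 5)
Your proposal is correct and follows essentially the same route as the paper: all three parts rest on specializing the multiplication axiom (where $\pi$ becomes trivial), using the operadic unit laws, and concluding via idempotency/cancellation in a group for (1) and (2) and Eckmann--Hilton for (3). The only difference is cosmetic: in part (1) you show $\textrm{id}\cdot\textrm{id} = \textrm{id}$ and invoke idempotency, whereas the paper shows $g\cdot\textrm{id} = g$ for all $g$ and cancels, but both come from the same instance of the axiom.
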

\begin{proof}
For the first claim, let $g \in G(1)$.  Then
\[
\begin{array}{rcl}
g & = & g \cdot e_{1} \\
& = & \mu(g; \textrm{id}) \cdot \mu(\textrm{id}; e_{1}) \\
& = & \mu(g \cdot \textrm{id}; \textrm{id} \cdot e_{1}) \\
& = & \mu(g \cdot \textrm{id}; \textrm{id}) \\
& = & g \cdot \textrm{id}
\end{array}
\]
using that $e_{1}$ is the unit element for the group structure, that $\textrm{id}$ is a two-sided unit for operad multiplication, and the final axiom for an action operad together with the fact that the only element of the symmetric group $\Sigma_{1}$ is the identity permutation.  Thus $g = g \cdot \textrm{id}$, so $\textrm{id} = e_{1}$.

For the second claim, write the operadic product as $\mu(e; \underline{e})$, and consider the square of this element. We have
\[
\begin{array}{rcl}
\mu(e; \underline{e}) \cdot \mu(e; \underline{e}) & = & \mu(e \cdot e; \underline{e} \cdot \underline{e}) \\
&= & \mu(e; \underline{e})
\end{array}
\]
where the first equality follows from the last action operad axiom together with the fact that $e$ gets mapped to the identity permutation; here $\underline{e} \cdot \underline{e}$ is the sequence $e_{i_{1}} \cdot e_{i_{1}}, \ldots, e_{i_{n}} \cdot e_{i_{n}}$.  Thus $\mu(e; \underline{e})$ is an idempotent element of the group $G(I)$, so must be the identity element $e_{I}$.

For the final claim, note that operadic multiplication $\mu:G(1) \times G(1) \rightarrow G(1)$ is a group homomorphism by the action operad axioms, and $\textrm{id} = e_{1}$ is a two-sided unit, so the Eckmann-Hilton argument shows that $\mu$ is actually group multiplication and that $G_{1}$ is abelian.
\end{proof}

Our construction of the monoidal structure on the category of $\mb{G}$-collections will require the Day convolution product \cite{day-thesis}.  This is a general construction which produces a monoidal structure on the category of presheaves $[\mathcal{V}^{\textrm{op}}, \mb{Sets}]$ from a monoidal structure on the category $\mathcal{V}$.  Since the category of $\mb{G}$-collections is the presheaf category $[\mathbb{G}^{\textrm{op}}, \mb{Sets}]$, we need to show that $\mathbb{G}$ has a monoidal structure.

\begin{prop}\label{Gmonoidal}
The action operad structure of $\mb{G}$ gives $\mathbb{G}$ a strict monoidal structure.
\end{prop}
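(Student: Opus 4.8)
The plan is to put on $\mathbb{G}$ the monoidal structure whose tensor is addition of objects, $m \otimes n = m+n$, with unit the object $0$, and whose effect on morphisms is the operadic block sum: for $g \in G(m) = \mathbb{G}(m,m)$ and $h \in G(n) = \mathbb{G}(n,n)$ set
\[
g \otimes h \;=\; \mu(e_2; g, h) \;\in\; G(m+n) = \mathbb{G}(m+n,m+n).
\]
Since $+$ is strictly associative and strictly unital on $\N$ with unit $0$, every associativity and unit constraint will be an identity; hence once $\otimes$ is shown to be a functor that is strictly associative and unital on arrows, the pentagon and triangle hold automatically and $\mathbb{G}$ is strict monoidal. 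Everything therefore reduces to three checks.

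First, functoriality. Preservation of identities, $\mu(e_2; e_m, e_n) = e_{m+n}$, is the instance $(n; i_1, i_2) = (2; m, n)$ of Lemma \ref{calclem}(2). Preservation of composition is the interchange law
\[
\mu(e_2; g, h)\cdot \mu(e_2; g', h') = \mu(e_2; gg', hh'),
\]
which is exactly the final axiom of Definition \ref{aoperad} with outer elements $g = g' = e_2$: because $\pi(e_2)$ is the identity permutation of $\Sigma_2$, the reindexing $f_{\pi(g')(i)}$ on the right-hand side is trivial and the axiom collapses to the displayed equation. This is the step in which the defining property of an action operad does its essential work.

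Second and third, strict associativity and unitality on morphisms, which I would derive from Lemma \ref{calclem} together with operad associativity and the operad unit laws. For associativity I claim $(g\otimes h)\otimes k$ and $g\otimes(h\otimes k)$ both equal $\mu(e_3; g, h, k)$: writing $e_3 = \mu(e_2; e_2, e_1)$ (an instance of Lemma \ref{calclem}(2)), operad associativity and the unit law $\mu(e_1; k) = k$ rewrite $\mu(e_3; g, h, k)$ as $\mu(e_2; \mu(e_2; g, h), k) = (g\otimes h)\otimes k$, and the dual expression $e_3 = \mu(e_2; e_1, e_2)$ gives $g\otimes(h\otimes k)$. For unitality I must show $\mu(e_2; e_0, f) = f = \mu(e_2; f, e_0)$; here the key observation is $\mu(e_2; e_0, e_1) = e_1$ (again Lemma \ref{calclem}(2)), so that operad associativity applied to $\mu\big(\mu(e_2; e_0, e_1); f\big)$, together with $\mu(e_0;) = e_0$ and $\mu(e_1; f) = f$, yields
\[
f = \mu(e_1; f) = \mu\big(\mu(e_2; e_0, e_1); f\big) = \mu(e_2; e_0, f),
\]
the right-hand unit law following symmetrically from $\mu(e_2; e_1, e_0) = e_1$.

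The main obstacle I anticipate is precisely this last unit calculation. The naive hope that $\mu(e_2; e_0, -)$ is already the identity map on $G(m)$ is not forced by the group axioms or the action-operad axiom in isolation---indeed $G(0)$ need not be trivial---so one cannot argue directly at the nullary level. The point is rather to route the computation through the identity $\mu(e_2; e_0, e_1) = e_1$ supplied by Lemma \ref{calclem}(2) and then let operad associativity and the operad unit laws do the rest; the analogous remark applies, more mildly, to associativity, where the whole argument rests on recognizing $e_3$ as a block sum of identities.
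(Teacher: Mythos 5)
Your proposal is correct and takes essentially the same route as the paper: the tensor is addition on objects and $\mu(e_2;-,-)$ on morphisms, functoriality is exactly the paper's check that $+$ is a group homomorphism (Lemma \ref{calclem}(2) for identities, the final action operad axiom with $\pi(e_2)$ trivial for composition), and your associativity computation via $e_3=\mu(e_2;e_2,e_1)=\mu(e_2;e_1,e_2)$ is the paper's naturality-of-the-associator argument read in the opposite direction. Your spelled-out unit calculation through $\mu(e_2;e_0,e_1)=e_1$ is precisely what the paper compresses into ``naturality of the unit follows similarly, using $e_0$.''
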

\begin{proof}
The tensor product on $\mathbb{G}$ is given by addition on objects, with unit object 0.  The only thing to do is define the tensor product on morphisms and check naturality for the associativity and unit isomorphisms, which will both be identities.  On morphisms, $+$ must be given by a group homomorphism
\[
+:G(n) \times G(m) \rightarrow G(n+m),
\]
 and this is given by the formula
\[
+(g,h) = \mu(e_{2}; g,h).
\]
We need that $+$ is a group homomorphism, and the second part of Lemma \ref{calclem} shows that it preserves identity elements.  The final action operad axiom shows that it also preserves group multiplication since $\pi_{2}(e_{2}) = e_{2}$ (each $\pi_{n}$ is a group homomorphism) and therefore
\[
\begin{array}{rcl}
\Big(+(g,h)\Big) \cdot \Big(+(g',h')\Big) & = & \mu(e_{2}; g,h) \cdot \mu(e_{2}; g',h') \\
 & = & \mu(e_{2}e_{2}; gg', hh') \\
& = & +(gg',hh').
\end{array}
\]
We now write $+(g,h)$ as $g+h$.

For naturality of the associator, we must have $(f+g)+h = f+(g+h)$.  By the operad axioms for both units and associativity, the lefthand side is given by
\[
\begin{array}{rcl}
\mu(e_{2}; \mu(e_{2}; f,g), h) & = & \mu(e_{2}; \mu(e_{2}; f,g), \mu(\textrm{id};h)) \\
& = & \mu(\mu(e_{2}; e_{2}, \textrm{id}); f,g,h),
\end{array}
\]
while the righthand side is then
\[
\mu(e_{2}; f, \mu(e_{2}; g,h)) = \mu(\mu(e_{2}; \textrm{id}, e_{2}); f,g,h).
\]
By Lemma \ref{calclem}, both of these are equal to $\mu(e_{3}; f,g,h)$, proving associativity.  Naturality of the unit follows similarly, using $e_{0}$.
\end{proof}

Now that $\mathbb{G}$ has a monoidal structure, we get a monoidal structure on the category of $\mathbb{G}$-collections
\[
[\mathbb{G}^{\textrm{op}}, \mb{Sets}] = \hat{\mathbb{G}}
\]
using Day convolution, denoted $\star$.  Given collections $X, Y$, their convolution product $X \star Y$ is given by the coend formula
\[
X \star Y (k) = \int^{m,n \in \mathbb{G}} X(m) \times Y(n) \times \mathbb{G}(k, m+n)
\]
We refer the reader to \cite{day-thesis} for further details.  We do note, however, that the $n$-fold Day convolution product of a presheaf $Y$ with itself is given by the following coend formula.
\[
Y^{\star n}(k) = \int^{(k_{1}, \ldots, k_{n}) \in \mathbb{G}^{n}} Y(k_{1}) \times \cdots \times Y(k_{n}) \times \mathbb{G}(k, k_{1} + \cdots + k_{n})
\]
Computations with Day convolution will necessarily involve heavy use of the calculus of coends, and we refer the reader in need of a refresher course on coends to \cite{maclane-catwork}.  Our goal is to express the substitution tensor product as a coend just as in \cite{kelly-op}, and to do that we need one final result about the Day convolution product.

\begin{lem}\label{calclem2}
Let $\mb{G}$ be an action operad, let $Y \in \hat{\mathbb{G}}$, and let $k$ be a fixed natural number.  Then the assignment
\[
n \mapsto Y^{\star n}(k)
\]
can be given the structure of a functor $\mathbb{G} \rightarrow \mb{Sets}$.
\end{lem}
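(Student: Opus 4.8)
The plan is to unwind the coend defining $Y^{\star n}(k)$ and to exploit the fact that, since $\mathbb{G}$ is a disjoint union of the one-object groupoids $BG(n)$, a functor $\mathbb{G} \to \mb{Sets}$ is exactly the data of a set for each natural number $n$ together with a left $G(n)$-action on it. Thus it suffices to equip each set $Y^{\star n}(k)$ with a natural left action of $G(n)$. Writing a typical element of the coend as an equivalence class $[y_1, \ldots, y_n; \phi]$ with $y_i \in Y(k_i)$, $k_1 + \cdots + k_n = k$, and $\phi \in \mathbb{G}(k, k_1 + \cdots + k_n) = G(k)$, the morphisms of $\mathbb{G}^n$ are tuples $(h_1, \ldots, h_n)$ with $h_i \in G(k_i)$, so the single coend relation reads
\[
[y_1 \cdot h_1, \ldots, y_n \cdot h_n; \phi] = [y_1, \ldots, y_n; \mu(e_n; h_1, \ldots, h_n) \cdot \phi],
\]
matching the second relation used earlier to define $X \circ Y$.

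For $g \in G(n)$ I would define
\[
g \cdot [y_1, \ldots, y_n; \phi] = [y_{\pi(g)^{-1}(1)}, \ldots, y_{\pi(g)^{-1}(n)}; \mu(g; e, \ldots, e) \cdot \phi],
\]
where $\mu(g; e, \ldots, e) \in G(k)$ is the block element with identities of the appropriate sizes $k_{\pi(g)^{-1}(i)}$; this is precisely the element appearing in the second equivariance axiom for $\mb{G}$-operads, and the target is again a valid representative since the sizes $k_{\pi(g)^{-1}(i)}$ still sum to $k$.

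First I would check that this is well defined, i.e. compatible with the coend relation above. Applying $g$ to the left-hand side and then using the relation once more to pull the (now permuted) elements $h_{\pi(g)^{-1}(i)}$ back out, the verification reduces to the single identity
\[
\mu(e_n; h_{\pi(g)^{-1}(1)}, \ldots, h_{\pi(g)^{-1}(n)}) \cdot \mu(g; e, \ldots, e) = \mu(g; e, \ldots, e) \cdot \mu(e_n; h_1, \ldots, h_n)
\]
in $G(k)$. I expect this to be the main obstacle, and it is also the conceptual heart of the lemma: each side is an instance of the final action operad axiom, both collapsing to $\mu(g; h_1, \ldots, h_n)$ once one uses that $\pi(e_n) = \mathrm{id}$. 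This is exactly the interchange law whose necessity the authors emphasize, and it is the same phenomenon that made $+$ a homomorphism in Proposition \ref{Gmonoidal}.

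Finally I would verify functoriality, namely that $g \mapsto (g \cdot -)$ is a group homomorphism $G(n) \to \mathrm{Sym}(Y^{\star n}(k))$. The permutation component composes correctly because $\pi$ is a homomorphism and the inverse in the indices turns permutation of positions into a left action, while the $G(k)$-component composes correctly because $g \mapsto \mu(g; e, \ldots, e)$ is a homomorphism $G(n) \to G(k)$, once again by the final axiom exactly as in Proposition \ref{Gmonoidal}; that $e_n$ acts trivially is immediate from $\mu(e_n; e, \ldots, e) = e_k$, which is Lemma \ref{calclem}(2). Assembling these gives the desired functor. Equivalently, and more in keeping with the coend calculus, one may build the same map as the composite of the reindexing isomorphism $\int^{\mathbb{G}^n} \Phi \cong \int^{\mathbb{G}^n} \Phi \circ \sigma_g$ along the factor-permuting automorphism $\sigma_g$ of $\mathbb{G}^n$ induced by $\pi(g)$ (where $\Phi(k_1, \ldots, k_n) = \prod_i Y(k_i) \times \mathbb{G}(k, k_1 + \cdots + k_n)$ is the functor whose coend is $Y^{\star n}(k)$) with the map on coends induced by a natural transformation twisting the $\mathbb{G}$-component by $\mu(g; e, \ldots, e)$, whose naturality is, yet again, the final action operad axiom.
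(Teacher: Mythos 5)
Your proposal is correct and is essentially the paper's own proof: the paper likewise defines the action of $f \in G(n)$ on the coend components by permuting the $Y$-factors along $\pi(f)$ and left-multiplying the $\mathbb{G}(k,-)$ factor by $\mu(f;\overline{e})$, its well-definedness check is exactly your displayed identity (both sides collapsing to $\mu(g;h_1,\ldots,h_n)$ by the final action operad axiom), and functoriality is likewise checked by composing the inducing maps. One harmless slip worth noting: the identity elements in $\mu(g;e,\ldots,e)$ must have sizes $k_1,\ldots,k_n$ (the paper writes $\mu(f;e_{k_1},\ldots,e_{k_n})$ explicitly), not $k_{\pi(g)^{-1}(i)}$ as your parenthetical says, since otherwise neither product in your key identity satisfies the stated hypotheses of the action operad axiom; with the correct sizes your equation and its verification are exactly the paper's.
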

\begin{proof}
Since the convolution product is given by a coend, it is the universal object with maps
\[
Y(k_{1}) \times \cdots \times Y(k_{n}) \times \mathbb{G}(k, k_{1} + \cdots + k_{n}) \rightarrow Y^{\star n}(k)
\]
such that the following diagram commutes for every $g_{1} \in G(k_{1}), \ldots, g_{n} \in G(k_{n})$.
\[
\xy
{\ar   (0,0)*+{Y(k_{1}) \times \cdots \times Y(k_{n}) \times \mathbb{G}(k, k_{1} + \cdots + k_{n})}; (40,15)*+{Y(k_{1}) \times \cdots \times Y(k_{n}) \times \mathbb{G}(k, k_{1} + \cdots + k_{n})} };
(9.5,10)*{\scriptstyle (-\cdot g_{1}, \ldots, -\cdot g_{n}) \times 1};
{\ar (40,15)*+{Y(k_{1}) \times \cdots \times Y(k_{n}) \times \mathbb{G}(k, k_{1} + \cdots + k_{n})}; (80,0)*+{Y^{\star n}(k)} };
{\ar (0,0)*+{Y(k_{1}) \times \cdots \times Y(k_{n}) \times \mathbb{G}(k, k_{1} + \cdots + k_{n})}; (40,-15)*+{Y(k_{1}) \times \cdots \times Y(k_{n}) \times \mathbb{G}(k, k_{1} + \cdots + k_{n})} };
(9.5,-10)*+{\scriptstyle 1 \times \big( (g_{1} + \cdots + g_{n})\cdot - \big)};
{\ar (40,-15)*+{Y(k_{1}) \times \cdots \times Y(k_{n}) \times \mathbb{G}(k, k_{1} + \cdots + k_{n})}; (80,0)*+{Y^{\star n}(k)} };
\endxy
\]
The first map along the top acts using the $g_{i}$'s, while the first map along the bottom is given by
\[
h \mapsto \mu(e_{n}; g_{1}, \ldots, g_{n}) \cdot h
\]
in the final coordinate.

Let $f \in G(n)$, considered as a morphism $n \rightarrow n$ in $\mathbb{G}$.  We induce a map $f \bullet -:Y^{\star n}(k) \rightarrow Y^{\star n}(k)$ using the collection of maps
\[
\prod_{i=1}^{n} Y(k_{i}) \times \mathbb{G}(k, k_{1} + \cdots + k_{n}) \rightarrow \prod_{i=1}^{n} Y(k_{\pi (f)^{-1}(i)}) \times \mathbb{G}(k, k_{1} + \cdots + k_{n})
\]
by using the symmetry $\pi(f)$ on the first $n$ factors and left multiplication by the element $\mu(f; e_{k_{1}}, \ldots, e_{k_{n}})$ on $\mathbb{G}(k, k_{1} + \cdots + k_{n})$.  To induce a map between the coends, we must show that these maps commute with the two lefthand maps in the diagram above.  For the top map, this is merely functoriality of the product together with naturality of the symmetry.  For the bottom map, this is the equation
\[
\mu(f; \overline{e}) \cdot \mu(e; g_{1}, \ldots, g_{n}) = \mu(e; g_{\pi (f)^{-1} 1}, \ldots, g_{\pi (f)^{-1} n}) \cdot \mu(f; \overline{e}).
\]
Both of these are equal to $\mu(f; g_{1}, \ldots, g_{n})$ by the action operad axiom.  Functoriality is then easy to check using that the maps inducing $(f_{1}f_{2}) \bullet -$ are given by the composite of the maps inducing $f_{1} \bullet (f_{2} \bullet -)$.
\end{proof}

We are now ready for the abstract description of the substitution tensor product.  The following proposition is easily checked directly using the definition of the coend; in fact, the righthand side below should be taken as the definition of $X \circ Y$ as both sides are really the result of some colimiting process.

\begin{prop}
Let $X, Y \in \hat{\mathbb{G}}$.  Then
\[
(X \circ Y)(k) \cong \int^{n} X(n) \times Y^{\star n}(k).
\]
\end{prop}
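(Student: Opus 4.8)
The plan is to prove the isomorphism by unwinding both coends into explicit coproduct-quotients and then matching the resulting generating relations against the two relations used to define $X \circ Y$. The key structural fact that makes this routine is that $\mathbb{G}$ is a coproduct of one-object categories (one for each $n$, with endomorphism monoid the group $G(n)$), so that for any functor $T: \mathbb{G}^{\textrm{op}} \times \mathbb{G} \rightarrow \mb{Sets}$ the coend $\int^{n} T(n,n)$ is simply the set $\big( \coprod_{n} T(n,n) \big)/\!\sim$, where for each $f \in G(n)$ one identifies $T(f,1)(t) \sim T(1,f)(t)$. There are no cross-terms because the hom-sets $\mathbb{G}(m,n)$ are empty for $m \neq n$. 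The same remark applies to $\mathbb{G}^{n}$ for the inner coend.

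First I would apply this to the inner coend. Taking $T(m,n) = X(m) \times Y^{\star n}(k)$, with $X$ contravariant and $n \mapsto Y^{\star n}(k)$ covariant by Lemma \ref{calclem2}, we get
\[
\int^{n} X(n) \times Y^{\star n}(k) \cong \Big( \coprod_{n} X(n) \times Y^{\star n}(k) \Big)\Big/ \big( (x \cdot f, w) \sim (x, f \bullet w) \big),
\]
where $f \bullet -$ is the action constructed in Lemma \ref{calclem2}. Next I would expand $Y^{\star n}(k)$ itself as a coend over $\mathbb{G}^{n}$. Since $\mathbb{G}(k, k_{1} + \cdots + k_{n})$ is $G(k)$ when $k = k_{1} + \cdots + k_{n}$ and empty otherwise, this yields
\[
Y^{\star n}(k) \cong \Big( \coprod_{k_{1} + \cdots + k_{n} = k} Y(k_{1}) \times \cdots \times Y(k_{n}) \times G(k) \Big)\Big/ \approx,
\]
where, reading off the cowedge condition recorded in the proof of Lemma \ref{calclem2}, the relation $\approx$ identifies $(y_{1} \cdot g_{1}, \ldots, y_{n} \cdot g_{n}; h)$ with $(y_{1}, \ldots, y_{n}; \mu(e_{n}; g_{1}, \ldots, g_{n}) \cdot h)$ for $g_{i} \in G(k_{i})$.

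Substituting the second display into the first and writing $r$ for $n$ gives precisely the underlying set of the definition of $(X \circ Y)(k)$, namely a quotient of $\coprod_{k_{1} + \cdots + k_{r} = k} X(r) \times Y(k_{1}) \times \cdots \times Y(k_{r}) \times G(k)$. It remains to check that the two generating relations coincide. The inner relation $\approx$ is literally the second relation in the definition of $X \circ Y$ (using $xe = x$). The outer relation $(x \cdot h, w) \sim (x, h \bullet w)$, once $h \bullet -$ is spelled out via Lemma \ref{calclem2} as the symmetry $\pi(h)$ on the $Y$-factors together with left multiplication by $\mu(h; e, \ldots, e)$ on the $G(k)$-coordinate, is exactly the first relation in the definition of $X \circ Y$. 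I expect the main obstacle to be purely bookkeeping: verifying that the equivalence relation generated by the two nested cowedge conditions is neither larger nor smaller than the one generated by the two displayed relations, keeping the reindexing by $\pi(h)^{-1}$ and the placement of the factor $\mu(h; e, \ldots, e)$ in the final $G(k)$-coordinate consistent. This consistency is guaranteed by the final action operad axiom, exactly as it was used in Lemma \ref{calclem2} to show that $h \bullet -$ is well defined, so once the two lists of relations are aligned the isomorphism is immediate and natural in $X$ and $Y$.
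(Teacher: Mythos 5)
Your proof is correct and takes essentially the same approach as the paper: the paper offers no written argument beyond remarking that the proposition ``is easily checked directly using the definition of the coend'' (even suggesting the coend formula be taken as the definition of $X \circ Y$), and your unwinding of the nested coends into coproduct-quotients is exactly that direct check. In particular, your matching of the outer cowedge relation (via the action $f \bullet -$ of Lemma \ref{calclem2}) with the first generating relation of $X \circ Y$, and of the inner Day-convolution relation with the second, is precisely the intended verification.
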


Finally we are in a position to prove Theorem \ref{operad=monoid}.  We make heavy use of the following consequence of the Yoneda lemma:  given any functor $F:\mathbb{G} \rightarrow \mb{Sets}$ and a fixed object $a \in \mathbb{G}$, we have a natural isomorphism
\[
\int^{n \in \mathbb{G}} \mathbb{G}(n,a) \times F(n) \cong F(a);
\]
there is a corresponding result for $F:\mathbb{G}^{\textrm{op}} \rightarrow \mb{Sets}$ using representables of the form $\mathbb{G}(a,n)$ instead.

\begin{proof}[Proof of \ref{operad=monoid}]
First we must show that $\mb{G}\mbox{-}\mb{Coll}$ has a monoidal structure using $\circ$.  To prove this, we must give the unit and associativity isomorphisms and then check the monoidal category axioms.  First, note that the unit object is given as $I = \mathbb{G}(-,1)$.  Then for the left unit isomorphism, we have
\[
\begin{array}{rcl}
I \circ Y (k) & \cong & \int^{n} \mathbb{G}(n,1) \times Y^{\star n}(k) \\
& \cong & Y^{\star 1}(k) \\
& \cong & Y(k)
\end{array}
\]
using only the properties of the coend.  For the right unit isomorphism, we have
\[
\begin{array}{rcl}
X \circ I (k) & \cong & \int^{n} X(n) \times I^{\star n}(k) \\
& \cong & \int^{n} X(n) \times \int^{k_{1}, \ldots, k_{n}} \mathbb{G}(k_{1},1) \times \cdots \times \mathbb{G}(k_{n},1) \times \mathbb{G}(k, k_{1} + \cdots + k_{n}) \\
& \cong & \int^{n} X(n) \times \mathbb{G}(k,1+ \cdots +1) \\
& = & \int^{n} X(n) \times \mathbb{G}(k,n) \\
& \cong & X(k)
\end{array}
\]
using the same methods.

For associativity, we compute $(X \circ Y) \circ Z$ and $X \circ (Y \circ Z)$.
\[
\begin{array}{rcl}
(X \circ Y) \circ Z (k) & = & \int^{m} X \circ Y (m) \times Z^{\star m}(k) \\
& = & \int^{m} \big( \int^{l} X(l) \times Y^{\star l}(m) \big) \times Z^{\star m}(k) \\
& \cong & \int^{m,l} X(l) \times Y^{\star l}(m) \times Z^{\star m}(k) \\
& \cong & \int^{l} X(l) \times \int^{m} Y^{\star l}(m) \times Z^{\star m}(k)
\end{array}
\]
The first isomorphism is from products distributing over colimits and hence coends, and the second is that fact plus the Fubini Theorem for coends \cite{maclane-catwork}.  A similar calculation shows
\[
X \circ (Y \circ Z)(k) \cong \int^{l} X(l) \times (Y \circ Z)^{\star l}(k).
\]
Thus the associativity isomorphism will be induced once we construct an isomorphism $\int^{m} Y^{\star l}(m) \times Z^{\star m} \cong (Y \circ Z)^{\star l}$.  We do this by induction, with the $l=1$ case being the isomorphism $Y^{\star 1} \cong Y$ together with the definition of $\circ.$  Assuming true for $l$, we prove the case for $l+1$ by the calculations below.
\[
\begin{array}{rcl}
(Y \circ Z)^{\star l+1} & \cong & (Y \circ Z) \star (Y \circ Z)^{\star l} \\
& \cong & (Y \circ Z) \star \big( \int^{m} Y^{\star l}(m) \times Z^{\star m} \big) \\
& = & \big( \int^{n} Y(n) \times Z^{\star n} \big) \star \big( \int^{m} Y^{\star l}(m) \times Z^{\star m} \big) \\
& = & \int^{a,b} \big( \int^{n} Y(n) \times Z^{\star n}(a) \big)  \times \big( \int^{m} Y^{\star l}(m) \times Z^{\star m}(b) \big) \times \mathbb{G}(-, a+b) \\
& \cong & \int^{a,b,n,m} Y(n) \times Y^{\star l}(m) \times Z^{\star n}(a) \times Z^{\star m}(b) \times  \mathbb{G}(-, a+b) \\
& \cong & \int^{n,m} Y(n) \times Y^{\star l}(m) \times Z^{\star (n+m)} \\
& \cong & \int^{j} \int^{n,m} Y(n) \times Y^{\star l}(m) \times \mathbb{G}(j, n+m) \times Z^{\star j} \\
& \cong & \int^{j} Y^{\star (l+1)}(j) \times Z^{\star j}
\end{array}
\]
Each isomorphism above arises from the symmetric monoidal structure on $\mb{Sets}$ using products, the monoidal structure on presheaves using $\star$, the properties of the coend, or the fact that products distribute over colimits.

For the monoidal category axioms on $\hat{\mathbb{G}}$, we only need to note that the unit and associativity isomorphisms arise, using the universal properties of the coend, from the unit and associativity isomorphisms on the category of sets together with the interaction between products and colimits.  Hence the monoidal category axioms follow by those same axioms in $\mb{Sets}$ together with the universal property of the coend.

Now we must show that monoids in $(\hat{\mathbb{G}}, \circ)$ are operads.  By the Yoneda lemma, a map of $\mb{G}$-collections $\eta: I \rightarrow X$ corresponds to an element $\textrm{id} \in X(1)$ since $I = \mathbb{G}(-,1)$.  A map $\mu:X \circ X \rightarrow X$ is given by a collection of $G(k)$-equivariant maps $X \circ X (k) \rightarrow X(k)$.  By the universal property of the coend, this is equivalent to giving maps
\[
\mu_{n, \underline{k}}:X(n) \times X(k_{1}) \times \cdots \times X(k_{n}) \times \mathbb{G}(k, k_{1}+\cdots +k_{n}) \rightarrow X(k)
\]
which are compatible with the actions of $G(k)$ (using the hom-set in the source, and the standard right action in the target) as well as each of $G(n), G(k_{1}), \ldots, G(k_{n})$.  The hom-set in $\mathbb{G}$ is nonempty precisely when $k=k_{1} + \cdots + k_{n}$, so we define the operad multiplication $\mu$ for $X$ to be
\[
\mu (x; y_{1}, \ldots, y_{n}) = \mu_{n, \underline{k}}(x; y_{1}, \ldots, y_{n}; e_{k}).
\]
Compatibility with the actions of the  $G(n), G(k_{1}), \ldots, G(k_{n})$ give the equivariance axioms, and the unit and associativity for the monoid structure give the unit and associativity axioms for the operad structure.  Finally, it is easy to check that a map of monoids is nothing more than an operad map which is appropriately equivariant for each $n$.
\end{proof}

\begin{rem}
The above result can be interpreted for $\mb{G}$-operads in an arbitrary cocomplete symmetric monoidal category $\mathcal{V}$ in which tensor distributes over colimits in each variable.  In order to do so, the following changes must be made.  First, cartesian products of objects $X(k)$ must be replaced by the tensor product in $\mathcal{V}$ of the same objects.  Second, any product with a hom-set from $\mathbb{G}$ must be replaced by a copower with the same set (recall that the copower of a set $S$ with an object $X$ is given by the formula $S \odot X = \coprod_{S} X$).  The same changes also allow one to interpret the results below about algebras in such a category, unless noted otherwise.
\end{rem}

An operad is intended to be an abstract description of a certain type of algebraic structure, and the particular instances of that structure are the algebras for that operad.  We begin with the definition of an algebra over a plain operad.

\begin{Defi}\label{opalgax}
Let $O$ be an operad.  An \textit{algebra} for $O$ consists of a set $X$ together with maps $\alpha_{n}:O(n) \times X^{n} \rightarrow X$ such that the following axioms hold.
\begin{enumerate}
\item The element $\textrm{id} \in O(1)$ is a unit in the sense that
\[
\alpha_{1}(\textrm{id}; x) = x
\]
for all $x \in X$.
\item The maps $\alpha_{n}$ are associative in the sense that the diagram
\[
\xy
(0,0)*+{O(n) \times O(k_{1}) \times X^{k_{1}} \times \cdots \times O(k_{n}) \times X^{k_{n}}}="ul";
(75,0)*+{O(n) \times X^{n}}="ur";
(0,-12)*+{O(n) \times O(k_{1}) \times \cdots \times O(k_{n}) \times X^{k_{1}} \times \cdots \times X^{k_{n}}}="ml";
(0,-24)*+{O(\sum k_{i}) \times X^{\sum k_{i}}}="bl";
(75,-24)*+{X}="br";
{\ar^>>>>>>>>>>>>>>{1 \times \alpha_{k_{1}} \times \cdots \alpha_{k_{n}}} "ul"; "ur"};
{\ar^{\alpha_{n}} "ur"; "br"};
{\ar_{\cong} "ul"; "ml"};
{\ar_{\mu \times 1} "ml"; "bl"};
{\ar_{\alpha_{\sum k_{i}}} "bl"; "br"};
\endxy
\]
commutes.
\end{enumerate}
\end{Defi}

Moving on to algebras for a $\mb{G}$-operad, let $P$ be a $\mb{G}$-operad and let $X$ be any set.  Write $P(n) \times_{G(n)} X^{n}$ for the coequalizer of the pair of maps
\[
P(n) \times G(n) \times X^{n} \rightrightarrows P(n) \times X^{n}
\]
of which the first map is the action of $G(n)$ on $P(n)$ and the second map is
\[
P(n) \times G(n) \times X^{n} \rightarrow P(n) \times \Sigma_{n} \times X^{n} \rightarrow P(n) \times X^{n}
\]
using $\pi_{n}:G(n) \rightarrow \Sigma_{n}$ together with the canonical action of $\Sigma_{n}$ on $X^{n}$ by permutation of coordinates: $\sigma \cdot (x_{1}, \ldots, x_{n}) = (x_{\sigma^{-1}(1)}, \ldots, x_{\sigma^{-1}(n)})$.  By the universal property of the coequalizer, a function $f:P(n) \times_{G(n)} X^{n} \rightarrow Y$ can be identified with a function $\tilde{f}:P(n) \times X^{n} \rightarrow Y$ such that
\[
\tilde{f}(p\cdot g; x_{1}, \ldots, x_{n}) = \tilde{f}(p; x_{\pi(g)^{-1}(1)}, \ldots, x_{\pi(g)^{-1}(n)}).
\]

\begin{Defi}
Let $P$ be a $\mb{G}$-operad.  An \textit{algebra} for $P$ consists of a set $X$ together with maps $\alpha_{n}:P(n) \times_{G(n)} X^{n} \rightarrow X$ such that the maps $\tilde{\alpha}_{n}$ satisfy the usual operad algebra axioms given in Definition \ref{opalgax}.
\end{Defi}

\begin{rem}
It is worth noting that the equivariance required for a $P$-algebra is built into the definition above by requiring the existence of the maps $\alpha_{n}$ to be defined on coequalizers, even though the algebra axioms then only use the maps $\tilde{\alpha}_{n}$.  Since every $\mb{G}$-operad has an underlying plain operad (see \ref{pbaop}, applied to the unique map $\mb{T} \rightarrow \mb{G}$), this reflects the fact that the algebras for the $\mb{G}$-equivariant version are always algebras for the plain version, but not conversely.
\end{rem}

\begin{Defi}
The category of algebras for $P$, $P\mbox{-}\mb{Alg}$, has objects the $P$-algebras $(X, \alpha)$ and morphisms $f: (X, \alpha) \rightarrow (Y, \beta)$ those functions $f:X \rightarrow Y$ such that the following diagram commutes for every $n$.
\[
\xy
{\ar^{1 \times f^{n}} (0,0)*+{P(n) \times X^{n}}; (50,0)*+{P(n) \times Y^{n}} };
{\ar^{\tilde{\beta}_{n}} (50,0)*+{P(n) \times Y^{n}}; (50,-15)*+{Y} };
{\ar_{\tilde{\alpha}_{n}} (0,0)*+{P(n) \times X^{n}}; (0,-15)*+{X} };
{\ar_{f} (0,-15)*+{X}; (50,-15)*+{Y} };
\endxy
\]
\end{Defi}

Let $X$ be a set.  Then the endomorphism operad of $X$, denoted $\mathcal{E}_{X}$, is given by the sets $\mathcal{E}_{X}(n) = \mb{Sets}(X^{n}, X)$ with the identity function in $\mathcal{E}_{X}(1)$ giving the unit element and composition of functions giving the composition operation.  Concretely, composition is given by the formula
\[
\mu(f; g_{1}, \ldots, g_{n}) = f \circ (g_{1} \times \cdots \times g_{n}).
\]

\begin{lem}
Let $G$ be an action operad, and let $X$ be a set.  Then $\mathcal{E}_{X}$ carries a canonical $\mb{G}$-operad structure.
\end{lem}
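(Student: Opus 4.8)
The plan is to equip each set $\mathcal{E}_X(n) = \mb{Sets}(X^n, X)$ with the right $G(n)$-action in which $G(n)$ permutes the inputs of a function through its underlying permutation. Explicitly, for $f \in \mathcal{E}_X(n)$ and $g \in G(n)$ I set
\[
(f \cdot g)(x_1, \ldots, x_n) = f(x_{\pi(g)^{-1}(1)}, \ldots, x_{\pi(g)^{-1}(n)}),
\]
that is, $f \cdot g = f \circ (\pi(g) \cdot -)$, the precomposition of $f$ with the standard left $\Sigma_n$-action on $X^n$. The first thing to check is that this is a right action. Since each $\pi_n$ is a group homomorphism and $\sigma \mapsto (\sigma \cdot -)$ is a genuine left action of $\Sigma_n$ on $X^n$, we get
\[
(f \cdot g) \cdot g' = f \circ \big((\pi(g) \cdot -) \circ (\pi(g') \cdot -)\big) = f \circ (\pi(g)\pi(g') \cdot -) = f \circ (\pi(gg') \cdot -) = f \cdot (gg'),
\]
and unit preservation is immediate from $\pi(e_n) = e$.

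The crucial observation is that this action factors through $\pi$: the element $g$ acts on $\mathcal{E}_X(n)$ only via $\pi(g) \in \Sigma_n$. I would use this to reduce the two equivariance axioms for a $\mb{G}$-operad to the corresponding axioms for the symmetric operad $\mb{\Sigma}$. Because $\pi$ is simultaneously a map of operads over $\Sigma$ and a homomorphism in each degree, operadic composition in $\Sigma$ gives
\[
\pi\big(\mu^{G}(e; g_1, \ldots, g_n)\big) = \mu^{\Sigma}(e; \pi(g_1), \ldots, \pi(g_n)) = \pi(g_1) \oplus \cdots \oplus \pi(g_n)
\]
and
\[
\pi\big(\mu^{G}(g; e_1, \ldots, e_n)\big) = \mu^{\Sigma}(\pi(g); e, \ldots, e) = \pi(g)^{+},
\]
the block sum and the block permutation respectively. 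Substituting these into the right-hand sides of the two $\mb{G}$-operad axioms, and using that $y \cdot h$ depends only on $\pi(h)$, both axioms become exactly the usual symmetric-operad equivariance identities for the endomorphism operad, namely
\[
\mu(f; h_1 \cdot \tau_1, \ldots, h_n \cdot \tau_n) = \mu(f; h_1, \ldots, h_n) \cdot (\tau_1 \oplus \cdots \oplus \tau_n)
\]
and
\[
\mu(f \cdot \sigma; h_1, \ldots, h_n) = \mu(f; h_{\sigma^{-1}(1)}, \ldots, h_{\sigma^{-1}(n)}) \cdot \sigma^{+},
\]
taken with $\tau_i = \pi(g_i)$ and $\sigma = \pi(g)$.

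It therefore remains to verify that $\mathcal{E}_X$ with its permutation action satisfies these two identities when the operad composition is $\mu(f; h_1, \ldots, h_n) = f \circ (h_1 \times \cdots \times h_n)$; this is the classical fact that $\mathcal{E}_X$ is the endomorphism symmetric operad. I would prove it by unwinding both sides as functions $X^{k_1 + \cdots + k_n} \to X$ and comparing coordinates. The first identity reduces to the bookkeeping statement $(h_1 \cdot \tau_1) \times \cdots \times (h_n \cdot \tau_n) = (h_1 \times \cdots \times h_n) \circ \big((\tau_1 \oplus \cdots \oplus \tau_n) \cdot -\big)$, expressing that the block sum permutes the inputs blockwise; the second reduces to $(\sigma \cdot -) \circ (h_1 \times \cdots \times h_n) = (h_{\sigma^{-1}(1)} \times \cdots \times h_{\sigma^{-1}(n)}) \circ (\sigma^{+} \cdot -)$, expressing that permuting the $n$ outputs by $\sigma$ agrees with permuting the $n$ input blocks by $\sigma^{+}$. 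The main obstacle is purely organizational: keeping the block-index bookkeeping straight in these two coordinate computations, in particular the direction of each permutation and which side carries the inverse. Once these two identities are confirmed the symmetric-operad axioms hold, and by the reduction above the canonical $\mb{G}$-operad structure on $\mathcal{E}_X$ follows.
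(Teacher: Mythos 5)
Your proposal is correct and is essentially the paper's own argument: the paper defines the action as the composite $\mathcal{E}_{X}(n) \times G(n) \xrightarrow{1 \times \pi_{n}} \mathcal{E}_{X}(n) \times \Sigma_{n} \rightarrow \mathcal{E}_{X}(n)$, i.e., pulling back the standard symmetric-operad action along $\pi$, exactly as you do. The only difference is that you spell out the routine verifications (that the pulled-back action is a right action, and that the $\mb{G}$-equivariance axioms reduce via the operad map $\pi$ to the classical symmetric equivariance of $\mathcal{E}_{X}$) which the paper leaves implicit.
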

\begin{proof}
$\mathcal{E}_{X}$ is a symmetric operad, so we define the actions by
\[
\mathcal{E}_{X}(n) \times G(n) \stackrel{1 \times \pi_{n}}{\longrightarrow} \mathcal{E}_{X}(n) \times \Sigma_{n} \rightarrow \mathcal{E}_{X}.
\]
\end{proof}

The previous result is really a change-of-structure groups result.  We record the general result as the following proposition and note that the proof is essentially the same as that for the lemma.

\begin{prop}\label{pbaop}
Let $f:\mb{G} \rightarrow \mb{G'}$ be a map of action operads.  Then $f$ induces a functor $f^{*}$ from the category of $\mb{G'}$-operads to the category of $\mb{G}$-operads.
\end{prop}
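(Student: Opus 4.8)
The plan is to leave the underlying operad untouched and simply restrict each group action along the homomorphism $f_{n}$. Concretely, given a $\mb{G'}$-operad $P$, I would define $f^{*}(P)$ to be the operad $P$ equipped, for each $n$, with the $G(n)$-action $p \cdot g := p \cdot f_{n}(g)$, where the right-hand side uses the given $G'(n)$-action on $P(n)$. That this is an action of $G(n)$ is immediate from $f_{n}$ being a group homomorphism: it preserves the unit, so $p \cdot e_{n} = p \cdot f_{n}(e_{n}) = p$, and it preserves multiplication, so $(p \cdot g) \cdot g' = p \cdot \big(f_{n}(g)f_{n}(g')\big) = p \cdot (gg')$. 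Everything then reduces to checking the two equivariance axioms of a $\mb{G}$-operad, and the point is that each of the three conditions defining a map of action operads gets used exactly once.

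For the first equivariance axiom I would rewrite $y_{i} \cdot g_{i}$ as $y_{i} \cdot f_{n}(g_{i})$ and apply the corresponding axiom for the $\mb{G'}$-operad $P$; this produces the factor $\mu^{G'}(e'; f(g_{1}), \ldots, f(g_{n}))$ acting on $\mu^{P}(x; y_{1}, \ldots, y_{n})$. Since $f$ is a map of operads and preserves identity elements (so $f(e_{n}) = e'_{n}$), this factor equals $f\big(\mu^{G}(e; g_{1}, \ldots, g_{n})\big)$, which is precisely the element by which we must act to match the $\mb{G}$-operad structure on $f^{*}(P)$. Hence the first axiom holds.

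For the second axiom the extra ingredient is compatibility with the projections to $\Sigma$. Writing $x \cdot g = x \cdot f(g)$ and applying the $\mb{G'}$-operad axiom reindexes the inputs by $\pi^{G'}(f(g))^{-1}$; the condition $\pi^{G'} \circ f = \pi^{G}$ guarantees this equals $\pi^{G}(g)^{-1}$, so the reindexing is exactly the one demanded by the $\mb{G}$-operad axiom. The surviving group factor $\mu^{G'}(f(g); e', \ldots, e') = f\big(\mu^{G}(g; e, \ldots, e)\big)$ again by $f$ being an operad map that preserves units, which completes the verification on objects.

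Finally, on morphisms a map $\phi: P \rightarrow Q$ of $\mb{G'}$-operads is an operad map with each $\phi_{n}$ being $G'(n)$-equivariant, and I would set $f^{*}(\phi) = \phi$ on underlying operads. Equivariance for the pulled-back actions is then $\phi_{n}(p \cdot f_{n}(g)) = \phi_{n}(p) \cdot f_{n}(g)$, which is just the $G'(n)$-equivariance of $\phi_{n}$ specialized to elements in the image of $f_{n}$; functoriality is automatic since the underlying operad maps and their composites are unchanged. The only genuine bookkeeping — and the nearest thing to an obstacle — is tracking which defining property of a map of action operads (group homomorphism, operad map, or commutation with $\pi$) licenses each equality, so that no hidden use of any condition is overlooked; this is precisely why the argument runs parallel to that of the preceding lemma.
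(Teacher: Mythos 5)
Your proposal is correct and takes essentially the same approach as the paper: the paper defines $f^{*}$ by restricting each $G'(n)$-action along the homomorphism $f_{n}$ (exactly as in the preceding lemma, where the endomorphism operad's symmetric action is pulled back along $\pi_{n}$), which is precisely your construction. The paper leaves the verification implicit ("the proof is essentially the same as that for the lemma"), and your check of the two equivariance axioms and of functoriality on morphisms supplies exactly those omitted details.
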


We can now use endomorphism operads to characterize algebra structures.

\begin{prop}\label{endoalg}
Let $X$ be a set, and $P$ a $\mb{G}$-operad.  Then algebra structures on $X$ are in 1-to-1 correspondence with $\mb{G}$-operad maps $P \rightarrow \mathcal{E}_{X}$.
\end{prop}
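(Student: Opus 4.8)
The plan is to exploit the exponential adjunction in $\mb{Sets}$, namely the natural bijection
\[
\mb{Sets}(P(n) \times X^{n}, X) \cong \mb{Sets}\big(P(n), \mb{Sets}(X^{n},X)\big) = \mb{Sets}(P(n), \mathcal{E}_{X}(n)),
\]
and to show that, under currying, the data and axioms of a $P$-algebra structure on $X$ match exactly the data and axioms of a $\mb{G}$-operad map $P \to \mathcal{E}_{X}$. Concretely, given a family of functions $\phi_{n}:P(n) \to \mathcal{E}_{X}(n)$ I would set $\tilde{\alpha}_{n}(p; x_{1}, \ldots, x_{n}) = \phi_{n}(p)(x_{1}, \ldots, x_{n})$, and conversely recover $\phi_{n}$ from any family $\tilde{\alpha}_{n}$ by the same formula. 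These assignments are mutually inverse by the adjunction, so it suffices to check that $\phi = \{\phi_{n}\}$ satisfies the conditions for a $\mb{G}$-operad map if and only if $\{\tilde{\alpha}_{n}\}$ satisfies the conditions for a $P$-algebra.

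First I would match the equivariance conditions. The canonical $\mb{G}$-operad structure on $\mathcal{E}_{X}$ lets $G(n)$ act through $\pi_{n}:G(n)\to\Sigma_{n}$ followed by the symmetric action on $\mathcal{E}_{X}(n)$, so that $G(n)$-equivariance of $\phi_{n}$ reads
\[
\phi_{n}(p\cdot g)(x_{1}, \ldots, x_{n}) = \phi_{n}(p)(x_{\pi(g)^{-1}(1)}, \ldots, x_{\pi(g)^{-1}(n)}).
\]
After currying this is precisely the identity $\tilde{\alpha}_{n}(p\cdot g; x_{1}, \ldots, x_{n}) = \tilde{\alpha}_{n}(p; x_{\pi(g)^{-1}(1)}, \ldots, x_{\pi(g)^{-1}(n)})$, which by the universal property of the coequalizer recalled just before the definition of a $P$-algebra is exactly the condition for $\tilde{\alpha}_{n}$ to factor through $P(n)\times_{G(n)} X^{n}$ and so produce the required map $\alpha_{n}$. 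Thus $G(n)$-equivariant families $\phi_{n}$ correspond precisely to families $\alpha_{n}$ defined on the coequalizers.

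Next I would match the operad axioms with the algebra axioms. Preservation of the unit, $\phi_{1}(\textrm{id}_{P}) = \textrm{id}_{X}$, curries to $\tilde{\alpha}_{1}(\textrm{id}; x) = x$, which is the algebra unit axiom. For composition, using the formula $\mu^{\mathcal{E}_{X}}(f; g_{1}, \ldots, g_{n}) = f\circ(g_{1}\times\cdots\times g_{n})$, the operad-map condition $\phi(\mu^{P}(x; y_{1}, \ldots, y_{n})) = \mu^{\mathcal{E}_{X}}(\phi(x); \phi(y_{1}), \ldots, \phi(y_{n}))$ curries, upon evaluation at a tuple $(z_{1}, \ldots, z_{\sum k_{i}})$, to
\[
\tilde{\alpha}(\mu^{P}(x; y_{1}, \ldots, y_{n}); z_{1}, \ldots) = \tilde{\alpha}\big(x; \tilde{\alpha}(y_{1}; z_{1}, \ldots, z_{k_{1}}), \ldots, \tilde{\alpha}(y_{n}; \ldots)\big),
\]
which is exactly the commutativity of the associativity square in Definition \ref{opalgax}. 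Since both correspondences are governed by the single currying bijection, they are inverse to one another, giving the claimed one-to-one correspondence.

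The main obstacle is the equivariance matching: one must verify that the right $\Sigma_{n}$-action used to endow $\mathcal{E}_{X}$ with its $\mb{G}$-operad structure is precisely the one whose curried form is the permutation action $\sigma\cdot(x_{1}, \ldots, x_{n}) = (x_{\sigma^{-1}(1)}, \ldots, x_{\sigma^{-1}(n)})$ appearing in the coequalizer, so that the $\pi(g)^{-1}$ indices line up on both sides. Once the conventions are pinned down, the unit and associativity transcriptions are routine applications of the adjunction, and the bijection follows.
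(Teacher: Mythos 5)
Your proposal is correct and takes essentially the same approach as the paper: the paper's (much terser) proof likewise uses the closed structure on $\mb{Sets}$ to identify maps $P(k) \rightarrow \mathcal{E}_{X}(k)$ with maps $P(k) \times X^{k} \rightarrow X$, with the operad-map axioms supplying the unit and associativity axioms and $\mb{G}$-equivariance supplying the factorization through the coequalizers. Your write-up simply makes explicit the currying computations and the matching of $\pi(g)^{-1}$ conventions that the paper leaves to the reader.
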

\begin{proof}
A map $P(k) \rightarrow \mathcal{E}_{X}(k)$ corresponds, using the closed structure on $\mb{Sets}$, to a map $P(k) \times X^{k} \rightarrow X$.  The monoid homomorphism axioms give the unit and associativity axioms, and the requirement that $P \rightarrow \mathcal{E}_{X}$ be a map of $\mb{G}$-operads gives the equivariance condition.
\end{proof}

\begin{rem}
The proposition above holds for $P$-algebras in any closed symmetric monoidal category.  Having a closed structure (in addition to all small colimits) is a stronger condition than the tensor preserving colimits in each variable, but it is a natural one that arises in many examples.
\end{rem}

\begin{Defi}
Let $P$ be a $\mb{G}$-operad.  Then $P$ induces an endofunctor of $\mb{Sets}$, denoted $\underline{P}$, by the following formula.
 \[
	\underline{P}(X)	 =  \coprod_n P(n) \times_{G(n)} X^n
\]
\end{Defi}

We now have the following proposition; its proof is standard \cite{maygeom}, and we leave it to the reader.

\begin{prop}\label{op=monad1}  Let $P$ be a $\mb{G}$-operad.
\begin{enumerate}
\item The $\mb{G}$-operad structure on $P$ induces a monad structure on $\underbar{P}$.
\item The category of algebras for the operad $P$ is isomorphic to the category of algebras for the monad $\underbar{P}$.
\end{enumerate}
\end{prop}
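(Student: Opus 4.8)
The plan is to build the monad $(\underline{P}, \eta, m)$ straight from the operad data and then to observe that, once every definition is unwound, a $\underline{P}$-algebra is literally the same datum as a $P$-algebra. For part (1), I would take the unit $\eta_{X} : X \to \underline{P}(X)$ to send $x$ to the class $[\mathrm{id}; x]$ lying in the $n=1$ summand $P(1) \times_{G(1)} X^{1}$ of $\underline{P}(X)$, where $\mathrm{id} \in P(1)$ is the operad identity. For the multiplication $m_{X} : \underline{P}\,\underline{P}(X) \to \underline{P}(X)$, I would first expand
\[
\underline{P}\,\underline{P}(X) = \coprod_{n} P(n) \times_{G(n)} \Big( \coprod_{k} P(k) \times_{G(k)} X^{k} \Big)^{n},
\]
and, since products distribute over coproducts, rewrite a typical summand so that a representative element has the form $[\,p; (q_{1}; \vec{x}_{1}), \ldots, (q_{n}; \vec{x}_{n})\,]$ with $p \in P(n)$, $q_{i} \in P(k_{i})$, and $\vec{x}_{i} \in X^{k_{i}}$. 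I then define $m_{X}$ on such representatives by
\[
[\,p; (q_{1}; \vec{x}_{1}), \ldots, (q_{n}; \vec{x}_{n})\,] \longmapsto [\,\mu^{P}(p; q_{1}, \ldots, q_{n}); \vec{x}_{1}, \ldots, \vec{x}_{n}\,],
\]
with the target lying in the $(k_{1} + \cdots + k_{n})$-summand of $\underline{P}(X)$.

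The hard part is checking that $m_{X}$ is well defined, i.e. that this formula descends through all of the coequalizers $\times_{G(-)}$ and is independent of the chosen representatives. There are three sources of relations to respect: the outer $G(n)$-action on $p$, the inner $G(k_{i})$-actions on the $q_{i}$, and the permutation of the $X$-coordinates induced by $\pi$. These are controlled precisely by the two equivariance axioms in the definition of a $\mb{G}$-operad: the first axiom handles the inner actions $q_{i} \mapsto q_{i} \cdot g_{i}$ together with the corresponding block reindexing of the $\vec{x}_{i}$, while the second handles the outer action $p \mapsto p \cdot g$ together with the reindexing by $\pi(g)^{-1}$ of the pairs $(q_{i}; \vec{x}_{i})$. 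This is exactly where the $\mb{G}$-operad structure, as opposed to a mere operad, is used, and it is the only step requiring genuine care; naturality of $\eta$ and $m$ in $X$ is immediate from the formulas. Granting well-definedness, the unit laws for the monad follow from the two-sided unit axiom for $\mathrm{id} \in P(1)$, and the associativity law $m \circ \underline{P}m = m \circ m\underline{P}$ follows from the operad associativity diagram, in both cases by unwinding the displays above on representatives.

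For part (2), I would observe that a $\underline{P}$-algebra structure on $X$ is a map $a : \underline{P}(X) \to X$, which by the universal property of the coproduct is exactly a family of maps $\alpha_{n} : P(n) \times_{G(n)} X^{n} \to X$. The monad unit law $a \circ \eta_{X} = \mathrm{id}_{X}$ unwinds to $\tilde{\alpha}_{1}(\mathrm{id}; x) = x$, which is the unit axiom of Definition \ref{opalgax}, and the monad associativity law $a \circ m_{X} = a \circ \underline{P}(a)$ unwinds, using the formula for $m_{X}$ above, to precisely the associativity diagram of Definition \ref{opalgax}. Thus $\underline{P}$-algebras and $P$-algebras carry identical data satisfying identical axioms. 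Finally, a morphism of $\underline{P}$-algebras is a map $f : X \to Y$ with $f \circ a = b \circ \underline{P}(f)$, which componentwise is exactly the commuting square defining a $P$-algebra morphism. Hence the assignment that is the identity on underlying sets and on structure maps is an isomorphism of categories $P\mbox{-}\mb{Alg} \cong \underline{P}\mbox{-}\mb{Alg}$, as required.
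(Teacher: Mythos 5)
Your proposal is correct, and it is precisely the standard argument that the paper itself declines to spell out (it cites May and leaves the proof to the reader): unit from $\mathrm{id}\in P(1)$, multiplication from $\mu^{P}$ on representatives, with the only substantive point being that the formula descends through the coequalizers $\times_{G(-)}$. You correctly locate that point in the two equivariance axioms of a $\mb{G}$-operad (together with $\pi$ being an operad map, so that $\mu^{G}(e;\ldots,g_{i},\ldots)$ and $\mu^{G}(g;\underline{e})$ map to the block-sum and block permutations acting on the $X$-coordinates), after which the unit, associativity, and algebra-comparison steps unwind exactly as you describe.
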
 

\section{Operads in $\mb{Cat}$}

This section will study those $\mb{G}$-operads for which each $P(n)$ is a category, and from here onwards any operad denoted $P$ is in $\mb{Cat}$. The extra structure that this 2-categorical setting provides allows us to consider notions of pseudoalgebras for an operad, as well as pseudomorphisms of operads. The induced monad associated to an operad of this sort can be shown to be a $2$-monad (see \cite{KS} for background on $2$-monads) and we will proceed to show that the notions of pseudoalgebra for both the operad and the associated 2-monad correspond precisely, i.e., there is an isomorphism of $2$-categories between the 2-category with either strict or pseudo-level cells defined operadically and the 2-category with either strict or pseudo-level cells defined 2-monadically.

The associated monad $\underline{P}$ acquires the structure of a $2$-functor as follows. We define $\underline{P}$ on categories much like before as  the coproduct
	\[
		\underline{P}(X) = \coprod_n P(n) \times_{G(n)} X^n,
	\]
whose objects will be written as equivalence classes $[p;x_1,\ldots,x_n]$ where $p \in P(n)$ and each $x_i \in X$, sometimes written as $[p;\underline{x}]$ when there is no confusion. On functors we define $\underline{P}$ in a similar way, exactly as with functions of sets. Given a natural transformation $\alpha \colon f \Rightarrow g$ we define a new natural transformation $\underline{P}(\alpha)$ as follows. The component of $\underline{P}(\alpha)$ at the object
	\[
		[p;x_1,\ldots,x_n]
	\]
is given by the morphism
	\[
		[1_p;\alpha_{x_1},\ldots,\alpha_{x_n}]
	\]
in $\underline{P}(X)$.
It is a simple observation that this constitutes a $2$-functor, and that the components of the unit and multiplication are functors and are $2$-natural.

%
%
First we will set out some conventions and definitions.
\begin{conv}
We will identify maps $\alpha_n \colon P(n) \times_{G(n)} X^n \rightarrow X$ with maps $\tilde{\alpha}_n \colon P(n) \times X^n \rightarrow X$ via the universal property of the coequalizer. Note also that in the following definitions we will often write the composite
    \[
        P(n) \times \prod(P(k_i) \times X^{k_i}) \rightarrow P(n) \times \prod P(k_i) \times X^{\Sigma k_i} \xrightarrow{\mu^P \times 1} P(\Sigma_{k_i}) \times X^{\Sigma k_i}
    \]
simply abbreviated as $\mu^P \times 1$.  Furthermore, instead of using an element $\textrm{id} \in P(1)$ as the operadic unit, we will now denote this as $\eta^{P}:1 \rightarrow P(1)$.
\end{conv}

We begin with the definitions of the pseudo-level cells in the operadic context, and after each specialize to the strict version.

\begin{Defi}
Let $P$ be a $\mb{G}$-operad. A \textit{pseudoalgebra} for $P$ consists of:
    \begin{itemize}
        \item a category $X$,
        \item a family of functors
            \[
                \left(\alpha_n: P(n) \times_{G(n)} X^n \rightarrow X \right)_{n \in \mathbb{N}},
            \]
        \item for each $n, k_1, \ldots, k_n \in \mathbb{N}$, a natural isomorphism
            \[
                \xy
                    (0,0)*+{\scriptstyle P_n \times \prod_{i=1}^n \left(P_{k_i} \times X^{k_i}\right)}="00";
                    (0,-10)*+{\scriptstyle P_n \times \prod_{i=1}^n P_{k_i} \times X^{\Sigma k_i}}="01";
                    (0,-20)*+{\scriptstyle P_{\Sigma k_i} \times X^{\Sigma k_i}}="02";
                    (60,-20)*+{\scriptstyle X}="12";
                    (60,0)*+{\scriptstyle P_n \times X^n}="11";
                    {\ar_{} "00" ; "01"};
                    {\ar^{1 \times \prod \tilde{\alpha}_{k_i}} "00" ; "11"};
                    {\ar^{\tilde{\alpha}_n} "11" ; "12"};
                    {\ar_{\mu^P \times 1} "01" ; "02"};
                    {\ar_>>>>>>>>>>>>>>>>>>>{\tilde{\alpha}_{\Sigma k_i}} "02" ; "12"};
                    {\ar@{=>}^{\phi_{k_1, \ldots, k_n}} (30,-8) ; (30,-12)};
                \endxy
            \]
        \item and a natural isomorphism
            \[
                \xy
                    (0,0)*+{X}="00";
                    (0,-15)*+{1 \times X}="x10";
                    (0,-30)*+{P(1) \times X}="10";
                    (30,-30)*+{X}="11";
                    {\ar_{\eta^P \times 1} "x10" ; "10"};
                    {\ar_{\tilde{\alpha}_1} "10" ; "11"};
                    {\ar^{1} "00" ; "11"};
                    {\ar_{\cong} "00" ; "x10"};
                    {\ar@{=>}^{\phi_\eta} (10,-18) ; (10,-22)};
                \endxy
            \]
    \end{itemize}
satisfying the following axioms.
    \begin{itemize}
        \item For all $n, k_i, m_{ij} \in \mathbb{N}$, the following equality of pasting diagrams holds.
            \[
                \xy
                    (0,0)*+{\scriptstyle P_n \times \prod_i\left(P_{k_i} \times \prod_j \left(P_{m_{ij}} \times X^{m_{ij}}\right)\right)}="00";
                    (60,0)*+{\scriptstyle P_n \times \prod_i \left(P_{k_i} \times X^{k_i}\right)}="10";
                    (0,-30)*+{\scriptstyle P_{\Sigma k_i} \times \prod_i\prod_j\left(P_{m_{ij}} \times X^{m_{ij}}\right)}="02";
                    (30,-50)*+{\scriptstyle P_{\Sigma\Sigma m_{ij}} \times X^{\Sigma \Sigma m_{ij}}}="04";
                    (80,-20)*+{\scriptstyle P_n \times X^n}="12";
                    (80,-50)*+{\scriptstyle X}="14";
                    {\ar^>>>>>>>>>>>>>>{1 \times \prod\left(1 \times \prod \tilde{\alpha}_{m_ij}\right)} "00" ; "10"};
                    {\ar^{1 \times \prod \tilde{\alpha}_{k_i}} "10" ; "12"};
                    {\ar^{\tilde{\alpha}_n} "12" ; "14"};
                    {\ar_{\mu^P \times 1} "00" ; "02"};
                    {\ar_{\mu^P \times 1} "02" ; "04"};
                    {\ar_{\tilde{\alpha}_{\Sigma\Sigma m_{ij}}} "04" ; "14"};
                    (30,-20)*+{\scriptstyle P_n \times \prod_i\left(P_{\Sigma m_{ij}} \times X^{\Sigma m_{ij}}\right)}="22";
                    {\ar^{\mu^P \times 1} "00" ; "22"};
                    {\ar^{1 \times \prod \tilde{\alpha}_{\Sigma m_{ij}}} "22" ; "12"};
                    {\ar^{\mu^P \times 1} "22" ; "04"};
                    (0,-70)*+{\scriptstyle P_n \times \prod_i\left(P_{k_i} \times \prod_j \left(P_{m_{ij}} \times X^{m_{ij}}\right)\right)}="b00";
                    (50,-70)*+{\scriptstyle P_n \times \prod_i \left(P_{k_i} \times X^{k_i}\right)}="b10";
                    (0,-100)*+{\scriptstyle P_{\Sigma k_i} \times \prod_i\prod_j\left(P_{m_{ij}} \times X^{m_{ij}}\right)}="b02";
                    (20,-120)*+{\scriptstyle P_{\Sigma\Sigma m_{ij}} \times X^{\Sigma \Sigma m_{ij}}}="b04";
                    (80,-90)*+{\scriptstyle P_n \times X^n}="b12";
                    (80,-120)*+{\scriptstyle X}="b14";
                    {\ar^>>>>>>>>>{1 \times \prod\left(1 \times \prod \tilde{\alpha}_{m_ij}\right)} "b00" ; "b10"};
                    {\ar^{1 \times \prod \tilde{\alpha}_{k_i}} "b10" ; "b12"};
                    {\ar^{\tilde{\alpha}_n} "b12" ; "b14"};
                    {\ar_{\mu^P \times 1} "b00" ; "b02"};
                    {\ar_{\mu^P \times 1} "b02" ; "b04"};
                    {\ar_{\tilde{\alpha}_{\Sigma\Sigma m_{ij}}} "b04" ; "b14"};
                    (50,-100)*+{\scriptstyle P_{\Sigma k_i} \times X^{\Sigma k_i}}="b22";
                    {\ar_{\mu^P \times 1} "b10" ; "b22"};
                    {\ar^>>>>>>>>>>>>>>>>{1 \times \prod\prod \tilde{\alpha}_{m_{ij}}} "b02" ; "b22"};
                    {\ar^{\tilde{\alpha}_{\Sigma k_i}} "b22" ; "b14"};
                    {\ar@{=>}^{1 \times \prod_i \phi_{m_{i1}, \ldots, m_{ik_{i}}}} (35,-8) ; (35,-12)};
                    {\ar@{=>}^{\phi_{\Sigma m_{1j}, \ldots, \Sigma m_{nj}}} (50,-33) ; (50,-37)};
                    {\ar@{=>}^{\phi_{k_1,\ldots,k_n}} (60,-92) ; (60,-96)};
                    {\ar@{=>}^{\phi_{m_{11}, \ldots, m_{nk_n}}} (30,-108) ; (30,-112)};
                    {\ar@{=} (45,-58) ; (45,-62)};
                \endxy
            \]
        \item Each pasting diagram of the following form is an identity.
            \[
                \xy
                    (0,0)*+{P_n \times X^n}="00";
                    (12,-12)*+{P_n \times (1 \times X)^n}="11";
                    (24,-24)*+{P_n \times (P_1 \times X)^n}="22";
                    (60,-24)*+{P_n \times X^n}="32";
                    (60,-48)*+{X}="34";
                    (24,-36)*+{P_n \times P_1^n \times X^n}="23";
                    (24,-48)*+{P_n \times X^n}="24";
                    {\ar@/^2.5pc/^{1} "00" ; "32"};
                    {\ar^{\tilde{\alpha}_n} "32" ; "34"};
                    {\ar^{\cong} "00" ; "11"};
                    {\ar^>>>{1 \times \left(\eta^P \times 1\right)^n} "11" ; "22"};
                    {\ar^>>>>>>{1 \times \tilde{\alpha}_1^n} "22" ; "32"};
                    {\ar@/_3pc/_{1} "00" ; "24"};
                    {\ar_{\cong} "22" ; "23"};
                    {\ar_{\mu^P \times 1} "23" ; "24"};
                    {\ar_{\tilde{\alpha}_n} "24" ; "34"};
                    {\ar@{=>}^{1 \times \phi_\eta^n} (32,-8) ; (32,-12)};
                    {\ar@{=>}^{\phi_{1,\ldots,1}} (40,-34) ; (40,-38)};
                \endxy
            \]
    \end{itemize}
\end{Defi}

\begin{Defi}
Let $P$ be a $\mb{G}$-operad. A \textit{ strict algebra} for $P$ consists of a pseudoalgebra in which all of the isomorphisms $\phi$ are identities.
\end{Defi}

\begin{Defi}
Let $(X, \alpha_n,\phi,\phi_\eta)$ and $(Y, \beta_n,\psi,\psi_{\eta})$ be pseudoalgebras for a $\mb{G}$-operad $P$. A \textit{pseudomorphism} of $P$-pseudoalgebras consists of:
    \begin{itemize}
        \item a functor $f \colon X \rightarrow Y$
        \item and a family of natural isomorphisms
            \[
                \xy
                    (0,0)*+{P_n \times X^n}="00";
                    (20,0)*+{X}="10";
                    (0,-15)*+{P_n \times Y^n}="01";
                    (20,-15)*+{Y}="11";
                    {\ar^>>>>>{\tilde{\alpha}_n} "00" ; "10"};
                    {\ar^{f} "10" ; "11"};
                    {\ar_{1 \times f^n} "00" ; "01"};
                    {\ar_>>>>>{\tilde{\beta}_n} "01" ; "11"};
                    {\ar@{=>}^{\overline{f}_n} (10,-5.5) ; (10,-9.5)};
                \endxy
            \]
        \end{itemize}
satisfying the following axioms.
    \begin{itemize}
        \item The following equality of pasting diagrams holds.
            \[
                \xy
                    (0,0)*+{\scriptstyle P_n \times \prod_i (P_{k_i} \times X^{k_i})}="00";
                    (50,0)*+{\scriptstyle P_n \times \prod_i (P_{k_i} \times Y^{k_i})}="10";
                    (0,-25)*+{\scriptstyle P_{\Sigma k_i} \times X^{\Sigma k_i}}="01";
                    (50,-25)*+{\scriptstyle P_{\Sigma k_i} \times Y^{\Sigma k_i}}="11";
                    (75,-15)*{\scriptstyle P_n \times Y^n}="21";
                    (75,-40)*+{\scriptstyle Y}="22";
                    (25,-40)*+{\scriptstyle X}="02";
                    {\ar^{1 \times \prod(1 \times f^{k_i})} "00" ; "10"};
                    {\ar^{1 \times \prod \tilde{\beta}_{k_i}} "10" ; "21"};
                    {\ar_{\mu^P \times 1} "00" ; "01"};
                    {\ar_{\tilde{\alpha}_{\Sigma k_i}} "01" ; "02"};
                    {\ar_{f} "02" ; "22"};
                    {\ar^{1 \times f^{\Sigma k_i}} "01" ; "11"};
                    {\ar_{\tilde{\beta}_{\Sigma k_i}} "11" ; "22"};
                    {\ar_{\mu^P \times 1} "10" ; "11"};
                    {\ar^{\tilde{\beta}_n} "21" ; "22"};
                    {\ar@{=>}^{\overline{f}_n} (37.5,-30.5) ; (37.5,-34.5)};
                    {\ar@{=>}^{\psi_{k_1,\ldots,k_n}} (57.5,-16.5) ; (57.5,-20.5)};
                    (0,-55)*+{\scriptstyle P_n \times \prod_i (P_{k_i} \times X^{k_i})}="b00";
                    (50,-55)*+{\scriptstyle P_n \times \prod_i (P_{k_i} \times Y^{k_i})}="b10";
                    (0,-80)*+{\scriptstyle P_{\Sigma k_i} \times X^{\Sigma k_i}}="b01";
                    (25,-70)*+{\scriptstyle P_n \times X^n}="b11";
                    (75,-70)*{\scriptstyle P_n \times Y^n}="b21";
                    (75,-95)*+{\scriptstyle Y}="b22";
                    (25,-95)*+{\scriptstyle X}="b02";
                    {\ar^{1 \times \prod(1 \times f^{k_i})} "b00" ; "b10"};
                    {\ar^{1 \times \prod \tilde{\beta}_{k_i}} "b10" ; "b21"};
                    {\ar_{\mu^P \times 1} "b00" ; "b01"};
                    {\ar_{\tilde{\alpha}_{\Sigma k_i}} "b01" ; "b02"};
                    {\ar_{f} "b02" ; "b22"};
                    {\ar^{\tilde{\beta}_n} "b21" ; "b22"};
                    {\ar^{1 \times \prod \tilde{\alpha}_{k_i}} "b00" ; "b11"};
                    {\ar^{1 \times f^n} "b11" ; "b21"};
                    {\ar_{\tilde{\alpha}_n} "b11" ; "b02"};
                    {\ar@{=>}^{\overline{f}_n} (50,-80.5) ; (50,-84.5)};
                    {\ar@{=>}^{1 \times \prod\overline{f}_{k_i}} (37.5,-60.5) ; (37.5,-64.5)};
                    {\ar@{=>}^{\phi_{k_1,\ldots,k_n}} (9,-72) ; (9,-76)};
                    {\ar@{=} (37.5,-45.5) ; (37.5,-49.5)};
                \endxy
            \]
            \item The following equality of pasting diagrams holds.
                \[
                    \xy
                        (0,0)*+{X}="00";
                        (20,0)*+{Y}="10";
                        (0,-15)*+{1 \times X}="01";
                        (20,-15)*+{1 \times Y}="11";
                        (0,-30)*+{P_1 \times X}="02";
                        (20,-30)*+{P_1 \times Y} = "12";
                        (20,-45)*+{X}="r02";
                        (40,-45)*+{Y}="r12";
                        {\ar^{f} "00" ; "10"};
                        {\ar@/^2pc/^{1} "10" ; "r12"};
                        {\ar_{\cong} "00" ; "01"};
                        {\ar_{\eta^P \times 1} "01" ; "02"};
                        {\ar_{\tilde{\alpha}_1} "02" ; "r02"};
                        {\ar^{1 \times f} "01" ; "11"};
                        {\ar^{1 \times f} "02" ; "12"};
                        {\ar^{\tilde{\beta}_1} "12" ; "r12"};
                        {\ar_{\cong} "10" ; "11"};
                        {\ar_{\eta^P \times 1} "11" ; "12"};
                        {\ar_{f} "r02" ; "r12"};
                        {\ar@{=>}^{\overline{f}_1} (20,-35.5) ; (20,-39.5)};
                        {\ar@{=>}^{\psi_{\eta}} (30,-20) ; (30,-24)};
                        (60,0)*+{X}="x00";
                        (80,0)*+{Y}="x10";
                        (60,-15)*+{1 \times X}="x01";
                        (60,-30)*+{P_1 \times X}="x02";
                        (80,-45)*+{X}="xr02";
                        (100,-45)*+{Y}="xr12";
                        {\ar^{f} "x00" ; "x10"};
                        {\ar@/^2pc/^{1} "x10" ; "xr12"};
                        {\ar_{\cong} "x00" ; "x01"};
                        {\ar_{\eta^P \times 1} "x01" ; "x02"};
                        {\ar_{\tilde{\alpha}_1} "x02" ; "xr02"};
                        {\ar_{f} "xr02" ; "xr12"};
                        {\ar@/^2pc/^{1} "x00" ; "xr02"};
                        {\ar@{=>}^{\phi_\eta} (70,-20) ; (70,-24)};
                        {\ar@{=} (45,-22.5) ; (49,-22.5)};
                    \endxy
                \]
    \end{itemize}
\end{Defi}

\begin{Defi}
Let $(X, \alpha_n,\phi,\phi_\eta)$ and $(Y, \beta_n,\psi,\psi_{\eta})$ be pseudoalgebras for a $\mb{G}$-operad $P$. A \textit{strict morphism} of $P$-pseudoalgebras consists of a pseudomorphism in which all of the isomorphisms $\overline{f}_{n}$ are identities.
\end{Defi}

\begin{rem}
A strict algebra for a $\mb{G}$-operad $P$ in $\mb{Cat}$ is precisely the same thing as an algebra for $P$ considered as an operad on the \textit{category} of small categories and functors.  A strict morphism between strict algebras is then just a map of $P$-algebras in the standard sense.  We could also consider the notion of a lax algebra for an operad, or a lax morphism of algebras, simply by considering natural transformations in place of isomorphisms in the definitions.
\end{rem}

\begin{Defi}
Let $P$ be a $\mb{G}$-operad and let $f, g \colon (X, \alpha, \phi, \phi_\eta) \rightarrow (Y, \beta, \psi, \psi_\eta)$ be pseudomorphisms of $P$-pseudoalgebras. A \textit{$P$-transformation} is then a natural transformation $\gamma \colon f \Rightarrow g$ such that the following following equality of pasting diagrams holds, for all $n$.
    \[
        \xy
            (0,0)*+{P_n \times X^n}="00";
            (30,0)*+{P_n \times Y^n}="10";
            (0,-20)*+{X}="01";
            (30,-20)*+{Y}="11";
            {\ar@/^1.5pc/^{1 \times f^n} "00" ; "10"};
            {\ar_{1 \times g^n} "00" ; "10"};
            {\ar^{\tilde{\beta}_n} "10" ; "11"};
            {\ar_{\tilde{\alpha}_n} "00" ; "01"};
            {\ar_{g} "01" ; "11"};
            {\ar@{=>}^{1 \times \gamma^n} (13.5,5.5) ; (13.5,1.5)};
            {\ar@{=>}^{\overline{g}_n} (13.5,-8) ; (13.5,-12)};
            (60,0)*+{P_n \times X^n}="x00";
            (90,0)*+{P_n \times Y^n}="x10";
            (60,-20)*+{X}="x01";
            (90,-20)*+{Y}="x11";
            {\ar^{1 \times f^n} "x00" ; "x10"};
            {\ar^{\tilde{\beta}_n} "x10" ; "x11"};
            {\ar_{\tilde{\alpha}_n} "x00" ; "x01"};
            {\ar^{f} "x01" ; "x11"};
            {\ar@/_1.5pc/_{g} "x01" ; "x11"};
            {\ar@{=>}^{\gamma} (75,-21.5) ; (75,-25.5)};
            {\ar@{=>}^{\overline{f}_n} (75,-8) ; (75,-12)};
            {\ar@{=} (42.75,-10) ; (46.75,-10)};
        \endxy
    \]
\end{Defi}

We can form various 2-categories using these cells.

\begin{Defi}
Let $P$ be a $\mb{G}$-operad.
\begin{itemize}
\item The $2$-category $P\mbox{-}\mb{Alg}_{s}$ consists of strict $P$-algebras, strict morphisms, and $P$-transformations.
\item The $2$-category $\mb{Ps}\mbox{-}P\mbox{-}\mb{Alg}$ consists of $P$-pseudoalgebras, pseudomorphisms, and $P$-transformations.
\end{itemize}
\end{Defi}

We also have the corresponding 2-monadic definitions, which we give for completeness.  We state these for any 2-category $\m{K}$, as specializing to $\mb{Cat}$ does not simplify them in any way.

\begin{Defi}
Let $T \colon \m{K} \rightarrow \m{K}$ be a $2$-monad. A $T$-\textit{pseudoalgebra} consists of an object $X$, a $1$-cell $\alpha \colon TX \rightarrow X$, and invertible $2$-cells
    \[
        \xy
            (0,0)*+{T^2X}="00";
            (20,0)*+{TX}="10";
            (0,-15)*+{TX}="01";
            (20,-15)*+{X}="11";
            {\ar^{T\alpha} "00" ; "10"};
            {\ar^{\alpha} "10" ; "11"};
            {\ar_{\mu_X} "00" ;  "01"};
            {\ar_{\alpha} "01" ; "11"};
            {\ar@{=>}^{\Phi} (10,-5.5) ; (10,-9.5)};
            (40,0)*+{X}="20";
            (52.5,-15)*+{TX}="31";
            (72.5,-15)*+{X}="41";
            {\ar_{\eta_X} "20" ; "31"};
            {\ar_{\alpha} "31" ; "41"};
            {\ar@/^1.5pc/^{1_X} "20" ; "41"};
            {\ar@{=>}^{\Phi_{\eta}} (54.5,-5.5) ; (54.5,-9.5)};
        \endxy
    \]
satisfying the following axioms.
    \begin{itemize}
        \item The following equality of pasting diagrams holds.
    \[
        \xy
            (5,0)*+{T^3X}="t3xl";
            (29,0)*+{T^2X}="t2xl1";
            (5,-17.5)*+{T^2X}="t2xl2";
            (24,-35)*+{TX}="txl1";
            (48,-17.5)*+{TX}="txl2";
            (48,-35)*+{X}="xl";
            (24,-17.5)*+{T^2X}="t2xl3";
            {\ar^{T^2\alpha} "t3xl" ; "t2xl1"};
            {\ar^{T\alpha} "t2xl1" ; "txl2"};
            {\ar^{\alpha} "txl2" ; "xl"};
            {\ar_{\mu_{TX}} "t3xl" ; "t2xl2"};
            {\ar_{\mu_X} "t2xl2" ; "txl1"};
            {\ar_{\alpha} "txl1" ; "xl"};
            {\ar_{T\mu_X} "t3xl" ; "t2xl3"};
            {\ar^{T\alpha} "t2xl3" ; "txl2"};
            {\ar_{\mu_X} "t2xl3" ; "txl1"};
            {\ar@{=>}_{T\Phi} (26,-6) ; (26,-10)};
            {\ar@{=>}^{\Phi} (36,-24) ; (36,-28)};
            (64,0)*+{T^3X}="t3xr";
            (88,0)*+{T^2X}="t2xr1";
            (64,-17.5)*+{T^2X}="t2xr2";
            (83,-35)*+{TX}="txr1";
            (107,-17.5)*+{TX}="txr2";
            (107,-35)*+{X}="xr";
            (88,-17.5)*+{TX}="txr3";
            {\ar^{T^2\alpha} "t3xr" ; "t2xr1"};
            {\ar^{T\alpha} "t2xr1" ; "txr2"};
            {\ar^{\alpha} "txr2" ; "xr"};
            {\ar_{\mu_{TX}} "t3xr" ; "t2xr2"};
            {\ar_{\mu_X} "t2xr2" ; "txr1"};
            {\ar_{\alpha} "txr1" ; "xr"};
            {\ar_{T\alpha} "t2xr2" ; "txr3"};
            {\ar_{\alpha} "txr3" ; "xr"};
            {\ar_{\mu_X} "t2xr1" ; "txr3"};
            {\ar@{=>}_{\Phi} (98,-15) ; (98,-19)};
            {\ar@{=>}^{\Phi} (85,-24) ; (85,-28)};
            {\ar@{=} (54,-20) ; (56,-20)};
        \endxy
    \]
    \item The following pasting diagram is an identity.
    \[
        \xy
            (0,0)*+{TX}="txl1";
            (15,-15)*+{T^2X}="t2x";
            (15,-30)*+{TX}="txl2";
            (35,-15)*+{TX}="txl3";
            (35,-30)*+{X}="xl";
            {\ar@/^1.7pc/^{1_{TX}} "txl1" ; "txl3"};
            {\ar@/_1.7pc/_{1_{TX}} "txl1" ; "txl2"};
            {\ar_{T\eta_X} "txl1" ; "t2x"};
            {\ar^{T\alpha} "t2x" ; "txl3"};
            {\ar_{\mu_X} "t2x" ; "txl2"};
            {\ar_{\alpha} "txl2" ; "xl"};
            {\ar^{\alpha} "txl3" ; "xl"};
            {\ar@{=>}^{T\Phi_\eta} (17,-5.5) ; (17,-9.5)};
            {\ar@{=>}^{\Phi} (25,-20.5) ; (25,-24.5)};
        \endxy
    \]
    \end{itemize}
\end{Defi}

\begin{Defi}
Let $T \colon \m{K} \rightarrow \m{K}$ be a $2$-monad. A \textit{strict $T$-algebra} consists of a pseudoalgebra in which all of the isomorphisms $\Phi$ are identities.
\end{Defi}

\begin{Defi}
Let $T$ be a $2$-monad and let $(X,\alpha,\Phi,\Phi_\eta)$, $(Y,\beta,\Psi,\Psi_\eta)$ be $T$-pseudoalgebras. A \textit{pseudomorphism} $(f, \bar{f})$ between these pseudoalgebras consists of a $1$-cell $f \colon X \rightarrow Y$ along with an invertible $2$-cell
    \[
        \xy
            (0,0)*+{TX}="00";
            (20,0)*+{TY}="10";
            (0,-15)*+{X}="01";
            (20,-15)*+{Y}="11";
            {\ar^{Tf} "00" ; "10"};
            {\ar^{\beta} "10" ; "11"};
            {\ar_{\alpha} "00" ; "01"};
            {\ar_{f} "01" ; "11"};
            {\ar@{=>}^{\bar{f}} (10,-5.5) ; (10,-9.5)};
        \endxy
    \]
satisfying the following axioms.
    \begin{itemize}
        \item The following equality of pasting diagrams holds.
                \[
        \xy
            (5,0)*+{T^2X}="t3xl";
            (29,0)*+{T^2Y}="t2xl1";
            (5,-17.5)*+{TX}="t2xl2";
            (24,-35)*+{TX}="txl1";
            (48,-17.5)*+{TY}="txl2";
            (48,-35)*+{Y}="xl";
            (24,-17.5)*+{TX}="t2xl3";
            {\ar^{T^2f} "t3xl" ; "t2xl1"};
            {\ar^{T\beta} "t2xl1" ; "txl2"};
            {\ar^{\beta} "txl2" ; "xl"};
            {\ar_{\mu_X} "t3xl" ; "t2xl2"};
            {\ar_{\alpha} "t2xl2" ; "txl1"};
            {\ar_{f} "txl1" ; "xl"};
            {\ar^{T\alpha} "t3xl" ; "t2xl3"};
            {\ar^{Tf} "t2xl3" ; "txl2"};
            {\ar_{\alpha} "t2xl3" ; "txl1"};
            {\ar@{=>}^{T\bar{f}} (24,-6) ; (24,-10)};
            {\ar@{=>}^{\bar{f}} (36,-24) ; (36,-28)};
            {\ar@{=>}^{\Phi} (13.5,-15.5) ; (13.5,-19.5)};
            (64,0)*+{T^2X}="t3xr";
            (88,0)*+{T^2Y}="t2xr1";
            (64,-17.5)*+{TX}="t2xr2";
            (83,-35)*+{TX}="txr1";
            (107,-17.5)*+{TY}="txr2";
            (107,-35)*+{Y}="xr";
            (88,-17.5)*+{TX}="txr3";
            {\ar^{T^2f} "t3xr" ; "t2xr1"};
            {\ar^{T\beta} "t2xr1" ; "txr2"};
            {\ar^{\beta} "txr2" ; "xr"};
            {\ar_{\mu_{X}} "t3xr" ; "t2xr2"};
            {\ar_{\alpha} "t2xr2" ; "txr1"};
            {\ar_{f} "txr1" ; "xr"};
            {\ar_{Tf} "t2xr2" ; "txr3"};
            {\ar_{\beta} "txr3" ; "xr"};
            {\ar_{\mu_Y} "t2xr1" ; "txr3"};
            {\ar@{=>}_{\Psi} (98,-15) ; (98,-19)};
            {\ar@{=>}^{\bar{f}} (85,-24) ; (85,-28)};
            {\ar@{=} (54,-20) ; (56,-20)};
        \endxy
    \]
    \item The following equality of pasting diagrams holds.
            \[
                        \xy
            (0,0)*+{X}="00";
            (20,0)*+{Y}="10";
            (0,-20)*+{TX}="01";
            (20,-20)*+{TY}="11";
            (10,-35)*+{X}="02";
            (30,-35)*+{Y}="12";
            {\ar^{f} "00" ; "10"};
            {\ar@/^1.5pc/^{1_Y} "10" ; "12"};
            {\ar_{\eta_X} "00" ; "01"};
            {\ar_{\eta_Y} "10" ; "11"};
            {\ar_{Tf} "01" ; "11"};
            {\ar_{\alpha} "01" ; "02"};
            {\ar_{f} "02" ; "12"};
            {\ar^{\beta} "11" ; "12"};
            {\ar@{=>}^{\bar{f}} (15,-25.5) ; (15,-29.5)};
            {\ar@{=>}^{\Psi_{\eta}} (25,-17) ; (25,-21)};
            (50,0)*+{X}="30";
            (70,0)*+{Y}="40";
            (50,-20)*+{TX}="31";
            (60,-35)*+{X}="32";
            (80,-35)*+{Y}="42";
            {\ar^{f} "30" ; "40"};
            {\ar_{\eta_X} "30" ; "31"};
            {\ar_{\alpha} "31" ; "32"};
            {\ar_{f} "32" ; "42"};
            {\ar@/^1.5pc/^{1_X} "30" ; "32"};
            {\ar@/^1.5pc/^{1_Y} "40" ; "42"};
            {\ar@{=>}^{\Phi_{\eta}} (55,-17) ; (55,-21)};
        \endxy
        \]
\end{itemize}
\end{Defi}

\begin{Defi}
Let $T$ be a $2$-monad and let $(X,\alpha,\Phi,\Phi_\eta)$, $(Y,\beta,\Psi,\Psi_\eta)$ be $T$-pseudoalgebras. A \textit{strict morphism} $(f, \bar{f})$ consists of a pseudomorphism in which $\bar{f}$ is an identity.
\end{Defi}

\begin{rem}
Once again, the strict algebras and strict morphisms are exactly the same as algebras and morphisms for the underlying monad on the underlying category of $\m{K}$.
\end{rem}

\begin{Defi}
Let $(f, \overline{f}), (g, \overline{g}):X \rightarrow Y$ be pseudomorphisms of $T$-algebras.  A \textit{$T$-transformation} consists of a 2-cell $\gamma:f \Rightarrow g$ such that the following equality of pasting diagrams holds.
\[
        \xy
            (0,0)*+{TX}="00";
            (30,0)*+{TY}="10";
            (0,-20)*+{X}="01";
            (30,-20)*+{Y}="11";
            {\ar@/^1.5pc/^{Tf} "00" ; "10"};
            {\ar_{Tg} "00" ; "10"};
            {\ar^{\beta} "10" ; "11"};
            {\ar_{\alpha} "00" ; "01"};
            {\ar_{g} "01" ; "11"};
            {\ar@{=>}^{T \gamma} (13.5,5.5) ; (13.5,1.5)};
            {\ar@{=>}^{\overline{g}} (13.5,-8) ; (13.5,-12)};
            (60,0)*+{TX}="x00";
            (90,0)*+{TY}="x10";
            (60,-20)*+{X}="x01";
            (90,-20)*+{Y}="x11";
            {\ar^{Tf} "x00" ; "x10"};
            {\ar^{\beta} "x10" ; "x11"};
            {\ar_{\alpha} "x00" ; "x01"};
            {\ar^{f} "x01" ; "x11"};
            {\ar@/_1.5pc/_{g} "x01" ; "x11"};
            {\ar@{=>}^{\gamma} (75,-21.5) ; (75,-25.5)};
            {\ar@{=>}^{\overline{f}} (75,-8) ; (75,-12)};
            {\ar@{=} (42.75,-10) ; (46.75,-10)};
        \endxy
    \]
\end{Defi}

Once again, we have 2-categories defined using the different kinds of cells.

\begin{Defi}
Let $T$ be a 2-monad.
\begin{itemize}
\item The $2$-category $T\mbox{-}\mb{Alg}_{s}$ consists of strict $T$-algebras, strict morphisms, and $T$-transformations.
\item The $2$-category $\mb{Ps}\mbox{-}T\mbox{-}\mb{Alg}$ consists of $T$-pseudoalgebras, pseudomorphisms, and $T$-transformations.
\end{itemize}
\end{Defi}

Our main result in this section is the following, showing that one can consider algebras and higher cells, in either strict or pseudo strength, using either the operadic or 2-monadic incarnation of a $\mb{G}$-operad $P$.  This extends Proposition \ref{op=monad1}.

\begin{thm}
Let $P$ be a $\mb{G}$-operad in $\mb{Cat}$.
\begin{itemize}
\item There is an isomorphism of $2$-categories
    \[
        P\mbox{-}\mb{Alg}_{s} \cong \underline{P}\mbox{-}\mb{Alg}_{s}.
    \]
\item There is an isomorphism of $2$-categories
    \[
        \mb{Ps}\mbox{-}P\mbox{-}\mb{Alg} \cong \mb{Ps}\mbox{-}\underline{P}\mbox{-}\mb{Alg}
    \]
    extending the one above.
\end{itemize}
\end{thm}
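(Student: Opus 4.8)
The plan is to establish both isomorphisms by matching the operadic data with the 2-monadic data piece by piece, bootstrapping from the underlying (strict) identification already recorded in Proposition \ref{op=monad1} and then upgrading to the coherence cells. The crucial structural fact is that $\underline{P}X = \coprod_n P(n) \times_{G(n)} X^n$ is a coproduct, so that a functor $\alpha \colon \underline{P}X \to X$ is precisely a family of functors $\alpha_n \colon P(n) \times_{G(n)} X^n \to X$, and likewise the $1$-cell $\underline{P}f$ and the $2$-cell $\underline{P}\gamma$ are assembled coproduct-component-wise from the corresponding operadic data. Thus the underlying objects, $1$-cells, and strict algebra structures on both sides are identified by \ref{op=monad1}; what remains is to match the invertible $2$-cells $\Phi, \Phi_\eta$ with the families $\phi_{k_1,\ldots,k_n}, \phi_\eta$, the mediating $2$-cells $\bar{f}$ with the families $\bar{f}_n$, and then to verify that the respective axioms correspond.

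First I would describe $\underline{P}^2 X$ concretely. Using that $\times_{G(n)}$ and the cartesian product commute with coproducts, $\underline{P}^2 X$ is a coproduct indexed by tuples $(n; k_1, \ldots, k_n)$ whose component is a suitable quotient of $P(n) \times \prod_i (P(k_i) \times X^{k_i})$, and under this decomposition the multiplication $\mu_X$ restricts on each component to the map induced by $\mu^P \times 1$. Consequently the $2$-monadic coherence square defining $\Phi$ decomposes, component by component, into exactly the square defining $\phi_{k_1,\ldots,k_n}$ in the definition of a $P$-pseudoalgebra; since a natural transformation out of a coproduct is the same as a family of natural transformations, specifying $\Phi$ is the same as specifying the family $\{\phi_{k_1,\ldots,k_n}\}$, and invertibility matches on the nose. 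The unit cell $\Phi_\eta$, which involves the monad unit $\eta_X \colon X \to \underline{P}X$ landing in the $n=1$ component through $\eta^P$, corresponds to $\phi_\eta$ in the same manner. The two pseudoalgebra axioms are then handled by the analogous decomposition of $\underline{P}^3 X$ into components indexed by $(n; k_i; m_{ij})$: the $2$-monadic associativity pasting equation, read on each such component, is precisely the large pasting equation in the operadic definition, and the $2$-monadic unit axiom restricts to the operadic unit pasting identity.

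With the correspondence of algebras in hand, I would repeat the same component-wise argument one categorical level down for the higher cells. For a pseudomorphism $(f, \bar{f})$ of $\underline{P}$-algebras, the $2$-cell $\bar{f} \colon \beta \circ \underline{P}f \Rightarrow f \circ \alpha$ lives over the coproduct $\underline{P}X$ and therefore corresponds to a family $\{\bar{f}_n\}$ exactly as in the operadic definition, with the two pseudomorphism coherence axioms matching by the $\underline{P}^2$-decomposition above. For a $\underline{P}$-transformation $\gamma$, the single pasting equation, decomposed over the components of $\underline{P}X$, is precisely the operadic $P$-transformation equation for each $n$. These assignments are visibly functorial in $1$-cell composition and in both kinds of $2$-cell composition, because every operation on the $\underline{P}$-side is computed coproduct-component-wise, so the two assignments define mutually inverse $2$-functors. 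The strict statements follow by specialization: requiring all $\Phi, \Phi_\eta$ (equivalently all $\phi_{k_1,\ldots,k_n}, \phi_\eta$) to be identities and each $\bar{f}$ (equivalently each $\bar{f}_n$) to be an identity, after which the second isomorphism manifestly restricts to the first.

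The main obstacle I expect is bookkeeping rather than conceptual: one must track the coequalizers $\times_{G(n)}$ and the associated equivariance carefully when translating between the coproduct description of $\underline{P}^2 X$ and the explicit products $P(n) \times \prod_i (P(k_i) \times X^{k_i})$ appearing in the operadic coherence diagrams. In particular, verifying that each component $\phi_{k_1,\ldots,k_n}$ descends to a well-defined natural isomorphism on the relevant coequalizer, compatibly with $\mu_X$, uses the equivariance axioms for $P$ together with the identification of $\tilde{\alpha}_n$ with $\alpha_n$ via the universal property of the coequalizer, exactly as in the convention preceding the definition of a $P$-pseudoalgebra. Once this compatibility is confirmed on the structure $2$-cells, the matching of the axioms is forced, since both sides are determined by their restrictions to the coproduct components.
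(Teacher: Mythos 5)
Your proposal is correct and follows essentially the same route as the paper's proof: both exploit the coproduct decomposition of $\underline{P}X$ (and the induced decompositions of $\underline{P}^2X$, $\underline{P}^3X$) to match $\Phi,\Phi_\eta,\bar{f}$ with the operadic families $\phi_{k_1,\ldots,k_n},\phi_\eta,\bar{f}_n$ component-wise, yielding mutually inverse $2$-functors, with the strict case obtained by specializing to identity $2$-cells. The descent-to-the-coequalizer bookkeeping you flag as the main obstacle is exactly the point the paper also treats briefly (``checking the axioms here is most easily done on components''), so your account matches the published argument in both structure and level of detail.
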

\begin{proof}
A proof of the first statement follows from our proof of the second by inserting identities where appropriate.  Thus we begin by constructing a $2$-functor $R \colon \mb{Ps}\mbox{-}\underline{P}\mbox{-}\mb{Alg} \rightarrow \mb{Ps}\mbox{-}P\mbox{-}\mb{Alg}$. We map a $\underline{P}$-pseudoalgebra $(X,\alpha,\Phi,\Phi_\eta)$ to the following $P$-pseudoalgebra on the same object $X$. First we define the functor $\alpha_n$ to be the composite
    \[
        \xy
            (0,0)*+{\alpha_n \colon P(n) \times_{G(n)} X^n}="00";
            (35,0)*+{\underline{P}(X)}="10";
            (55,0)*+{X.}="20";
            {\ar@{^{(}->} "00" ; "10"};
            {\ar^{\alpha} "10" ; "20"};
        \endxy
    \]
The isomorphisms $\phi_{k_1,\ldots,k_n}$ are defined using $\Phi$ as in the following diagram
	\[
		\xy
			(-10,0)*+{\scriptstyle P_n \times \prod_{i=1}^n\left(P_{k_i} \times X^{k_i}\right)}="00";
           	(30,0)*+{\scriptstyle P_n \times \prod_i \left( P_{k_i} \times_{G_{k_i}} X^{k_i} \right)}="10";
            (60,0)*+{\scriptstyle P_n \times \underline{P}(X)^n}="20";
            (90,0)*+{\scriptstyle P_n \times X^n}="30";
            (-10,-15)*+{\scriptstyle P_n \times \prod_{i} P_{k_i} \times X^{\Sigma k_I}}="01";
            (-10,-30)*+{\scriptstyle P_{\Sigma k_i} \times X^{\Sigma k_{i}}}="02";
            (60,-7.5)*+{\scriptstyle P_n \times_{G_n} \underline{P}(X)^n}="21";
            (60,-15)*+{\scriptstyle \underline{P}^2(X)}="22";
            (90,-7.5)*+{\scriptstyle P_n \times_{G_n} X^n}="31";
            (90,-15)*+{\scriptstyle \underline{P}(X)}="32";
            (30,-30)*+{\scriptstyle P_{\Sigma k_i} \times_{G_{\Sigma k_i}} X^{\Sigma k_i}}="12";
            (60,-30)*+{\scriptstyle \underline{P}(X)}="23";
            (90,-30)*+{\scriptstyle X}="33";
            {\ar "00" ; "10"};
            {\ar "00" ; "01"};
            {\ar_{\mu^P \times 1} "01" ; "02"};
            {\ar@{^{(}->} "10" ; "20"};
            {\ar "20" ; "21"};
            {\ar^{1 \times \alpha^n} "20" ; "30"};
            {\ar "30" ; "31"};
            {\ar@{^{(}->} "21" ; "22"};
            {\ar^{\underline{P}\alpha} "22" ; "32"};
            {\ar@{^{(}->} "31" ; "32"};
            {\ar_{\mu_X} "22" ; "23"};
            {\ar_{\alpha} "23" ; "33"};
            {\ar^{\alpha} "32" ; "33"};
            {\ar "02" ; "12"};
            {\ar@{^{(}->} "12" ; "23"};
            {\ar@{=>}^{\Phi} (75,-20.5) ; (75,-24.5)};
        \endxy
    \]
whilst $\Phi_\eta$ is simply sent to itself, since the composition of $\alpha$ with the composite of the coequalizer and inclusion map from $P(1) \times X$ into $\underline{P}(X)$ is just $\tilde{\alpha_1}$. Checking the axioms here is most easily done on components and it is easily seen that the axioms required of this data to be a $P$-pseudoalgebra are precisely those that they satisfy by virtue of $X$ being  a $\underline{P}$-pseudoalgebra.

For a $1$-cell $(f,\overline{f}) \colon (X, \alpha) \rightarrow (Y, \beta)$, we send $f$ to itself whilst sending $\overline{f}$ to the obvious family of isomorphisms, as follows.
    \[
        \xy
            (-30,0)*+{P(n) \times X^n}="-10";
            (-30,-15)*+{P(n) \times Y^n}="-11";
            (0,0)*+{P(n) \times_{G(n)} X^n}="00";
            (30,0)*+{\underline{P}(X)}="10";
            (60,0)*+{X}="20";
            (0,-15)*+{P(n) \times_{G(n)} Y^n}="01";
            (30,-15)*+{\underline{P}(Y)}="11";
            (60,-15)*+{Y}="21";
            {\ar@{^{(}->} "00" ; "10"};
            {\ar^{\alpha} "10" ; "20"};
            {\ar_{1 \times f^n} "00" ; "01"};
            {\ar_{\underline{P}f} "10" ; "11"};
            {\ar^{f} "20" ; "21"};
            {\ar@{^{(}->} "01" ; "11"};
            {\ar_{\beta} "11" ; "21"};
            {\ar "-10" ; "00"};
            {\ar "-11" ; "01"};
            {\ar_{1 \times f^n} "-10" ; "-11"};
            {\ar@{=>}^{\overline{f}} (45,-5.5) ; (45,-9.5)};
        \endxy
    \]
It is easy to check that the above data satisfy the axioms for being a pseudomorphism of $P$-pseudoalgebras, following from the axioms for $(f,\overline{f})$ being a pseudomorphism of $\underline{P}$-pseudoalgebras. A $\underline{P}$-transformation $\gamma \colon (f, \bar{f}) \Rightarrow (g, \bar{g})$ immediately gives a $P$-transformation $\bar{\gamma}$ between the families of isomorphisms we previously defined, with the components of $\bar{\gamma}$ being precisely those of $\gamma$.  It is then obvious that $R$ is a $2$-functor.

For there to be an isomorphism of $2$-categories, we require an inverse to $R$, namely a $2$-functor $S \colon \mb{Ps}\mbox{-}P\mbox{-}\mb{Alg} \rightarrow \mb{Ps}\mbox{-}\underline{P}\mbox{-}\mb{Alg}$. Now assume that $(X, \alpha_n, \phi_{\underline{k}_i}, \phi_\eta)$ is a $P$-pseudoalgebra.  We will give the same object $X$ a $\underline{P}$-pseudoalgebra structure. We can induce a functor $\alpha \colon \underline{P}(X) \rightarrow X$ by using the universal property of the coproduct.
    \[
        \xy
            (-30,0)*+{P(n) \times X^n}="-10";
            (0,0)*+{P(n) \times_{G(n)} X^n}="00";
            (30,0)*+{\underline{P}(X)}="10";
            (30,-15)*+{X}="11";
            {\ar "-10" ; "00"};
            {\ar^{\alpha_n} "00" ; "11"};
            {\ar@{^{(}->} "00" ; "10"};
            {\ar^{\exists ! \alpha} "10" ; "11"};
            {\ar_{\tilde{\alpha}_n} "-10" ; "11"};
        \endxy
    \]
Of course, this can be induced using either $\alpha_n$ or $\tilde{\alpha}_n$, each giving the same functor $\alpha$ by uniqueness. The components of the isomorphism $\Phi \colon \alpha \circ \underline{P}(\alpha) \Rightarrow \alpha \circ \mu_X$ can be given as follows. Let $|\underline{x}_i|$ denote the number of objects in the list $\underline{x}_i$. Then define the component of $\Phi$ at the object
    \[
        [p;[q_1;\underline{x}_1],\ldots,[q_n;\underline{x}_n]]
    \]
to be component of $\phi_{|\underline{x}_1|, \ldots, |\underline{x}_n|}$ at the same object. To make this clearer, consider the object $[p;[q_1;x_{11}],[q_2;x_{21},x_{22}],[q_3;x_{31}]]$. The component of $\Phi$ at this object is given by the component of $\phi_{1,2,1}$ at the same object. The isomorphism $\phi_\eta$ is again sent to itself.

Now given a $1$-cell $f$ with structure $2$-cells $\overline{f}_n$ we define a $1$-cell $(F,\overline{F})$ with underlying $1$-cell $f$ and structure $2$-cell $\overline{F}$ with components
    \[
        \overline{F}_{[p;x_1, \ldots, x_n]} := \left(\overline{f}_{n}\right)_{(p;x_1,\ldots,x_n)}.
    \]
For example, the component of $\overline{F}$ at the object $[p;x_1,x_2,x_3]$ would be the component of $f_3$ at the object $(p;x_1,x_2,x_3)$.

The mapping for $2$-cells is just the identity as before. These mappings again constitute a $2$-functor in the obvious way and from how they are defined it is also clear that this is an inverse to $R$.
\end{proof}

\begin{rem}
Another interpretation of pseudoalgebras can be given in terms of pseudomorphisms of operads. Algebras for an operad $P$ can be identified with a morphism of operads $F \colon P \rightarrow \mathcal{E}_X$, where $\mathcal{E}_X$ is the endomorphism operad (Proposition \ref{endoalg}). We can similarly define pseudomorphisms for a $\mathbf{Cat}$-enriched $\mb{G}$-operad and identify pseudoalgebras with pseudomorphisms into the endomorphism operad.

If $P$, $Q$ are $\mb{G}$-operads then a \textit{pseudomorphism} of $\mb{G}$-operads $F \colon P \rightarrow Q$ consists of a family of $\mb{G}$-equivariant functors
            \[
                \left(F_n \colon P(n) \rightarrow Q(n)\right)_{n \in \mathbb{N}}
            \]
together with isomorphisms instead of the standard algebra axioms.  For example, the associativity isomorphism has the following form.
            \[
                \xy
                    (0,0)*+{\scriptstyle P(n) \times \prod_i P(k_i)}="00";
                    (35,0)*+{\scriptstyle Q(n) \times \prod_i Q(k_i)}="10";
                    (0,-15)*+{\scriptstyle P(\Sigma k_i)}="01";
                    (35,-15)*+{\scriptstyle Q(\Sigma k_i)}="11";
                    {\ar^{F_n \times \prod_i F_{k_i}} "00" ; "10"};
                    {\ar^{\mu^Q} "10" ; "11"};
                    {\ar_{\mu^P} "00" ; "01"};
                    {\ar_{F_{\Sigma k_i}} "01" ; "11"};
                    {\ar@{=>}^{\psi_{k_1,\ldots,k_n}} (15,-5.5) ; (15,-9.5)};
                \endxy
            \]
These isomorphisms are then required to satisfy their own axioms, and these ensure that we have a weak map of 2-monads $\underline{P} \Rightarrow \underline{Q}$.  In particular, one can show that a pseudomorphism from $P$ into the endomorphism operad $\mathcal{E}_X$ produces pseudoalgebras for the operad $P$ using the closed structure on $\mb{Cat}$.  While abstractly pleasing, we do not pursue this argument any further here.
\end{rem}

\section{Basic properties}
This section will be concerned with characterizing various properties of those $2$-monads induced by $\mb{G}$-operads. We first consider when these $2$-monads are finitary as this describes how they interact with colimits. We will then give conditions for these $2$-monads to be $2$-cartesian, describing how they interact with certain limits (namely $2$-pullbacks). Finally in this section we will continue the study of algebras for these $2$-monads, showing that the coherence theorem in \cite{lack-cod} applies to all such $2$-monads and allows us to show that each pseudo-$\underline{P}$-algebra is equivalent to a strict $\underline{P}$-algebra (and so similarly, by our previous results, to the pseudoalgebras for a $\mb{G}$-operad $P$).

 The $2$-categories $\mb{Ps}\mbox{-}T\mbox{-}\mb{Alg}$ (of pseudoalgebras and weak morphisms) and $T\mbox{-}\mb{Alg}_s$ (of strict algebras and strict morphisms) are of particular interest. The behavior of colimits in both of these 2-categories can often be deduced from properties of the 2-monad $T$, the most common being that $T$ is finitary.  In practice, one thinks of a finitary monad as one in which all operations take finitely many inputs as variables.  If $T$ is finitary, then $T\mbox{-}\mb{Alg}_s$ will be cocomplete by standard results given in \cite{BKP}.  There are additional results of a purely 2-dimensional nature concerning finitary $2$-monads, detailed in \cite{lack-cod} and extending those in \cite{BKP}, namely the existence of a left adjoint
    \[
        \mb{Ps}\mbox{-}T\mbox{-}\mb{Alg} \rightarrow T\mbox{-}\mb{Alg}_s
    \]
to the forgetful $2$-functor which regards a strict algebra as a pseudoalgebra with identity structure isomorphisms.

We begin by showing each associated $2$-monad is finitary.
\begin{prop}
Let $P$ be a $\mb{G}$-operad. Then $\underline{P}$ is finitary.
\end{prop}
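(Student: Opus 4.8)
The plan is to prove the statement directly from the defining property of a finitary 2-monad, namely that its underlying endofunctor preserves filtered colimits. Recall that $\underline{P}(X) = \coprod_n P(n) \times_{G(n)} X^n$, where $P(n) \times_{G(n)} X^n$ is the coequalizer of the pair $P(n) \times G(n) \times X^n \rightrightarrows P(n) \times X^n$. Since coproducts and coequalizers are themselves colimits, and colimits commute with colimits (each being computed by a left adjoint), the two outer layers of this formula automatically commute with filtered colimits. Thus the whole problem reduces to showing that, for each fixed $n$, the functor $X \mapsto P(n) \times X^n$ preserves filtered colimits; the two diagrams $P(n) \times G(n) \times X^n$ and $P(n) \times X^n$ appearing in the coequalizer are then both built from functors of this shape, so they preserve filtered colimits as well, and hence so does their coequalizer.

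The functor $X \mapsto P(n) \times X^n$ factors as $X \mapsto X^n$ followed by $P(n) \times -$. The second factor $P(n) \times -$ preserves \emph{all} colimits, since $\mb{Cat}$ is cartesian closed and so $P(n) \times -$ is a left adjoint (with right adjoint the internal hom out of $P(n)$). For the first factor, the key point is that $X \mapsto X^n$ is a finite product, and in $\mb{Cat}$ finite products---indeed all finite limits---commute with filtered colimits, because $\mb{Cat}$ is locally finitely presentable. Composing, $X \mapsto P(n) \times X^n$ preserves filtered colimits, which is exactly what the reduction above required. Assembling the layers, $\underline{P}$ preserves filtered colimits and is therefore finitary.

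The only step that is not a purely formal ``colimits commute with colimits'' observation is the claim that finite products in $\mb{Cat}$ commute with filtered colimits; this is the sole point where one uses something specific about $\mb{Cat}$, and I expect it to be the main obstacle to spell out cleanly (though it is standard). Concretely, it amounts to the fact that an object or morphism of a filtered colimit $\mathrm{colim}_i X_i$ is represented at some stage of the diagram, and that any finite collection of such data can be transported to a common later stage using filteredness---this is precisely the reason $\mb{Cat}$ is locally finitely presentable. If one wished to avoid invoking local finite presentability as a black box, one could verify this commutation by hand on objects and morphisms of the categories involved, but citing the standard fact keeps the argument short and makes transparent that nothing beyond the elementary behavior of filtered colimits is needed.
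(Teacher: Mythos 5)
Your proof is correct and follows essentially the same route as the paper's: both reduce the problem to the commutation of finite powers $X \mapsto X^n$ with filtered (equivalently directed) colimits, justified by $\mb{Cat}$ being locally finitely presentable, and handle the remaining layers (the product with $P(n)$, the coequalizer, and the coproduct) by the formal facts that products distribute over colimits and colimits commute with colimits. Your version merely organizes the same ingredients a bit more explicitly, so there is nothing to fix.
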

\begin{proof}
To show that $\underline{P}$ is finitary we must show that it preserves filtered colimits or, equivalently, that it preserves directed colimits (see \cite{ar}). Consider some directed colimit, $\text{colim}X_{i}$ say, in $\mathbf{Cat}$. Then consider the following sequence of isomorphisms:
    \begin{align*}
        \underline{P}(\text{colim}X_{i}) &= \coprod_n P(n) \times_{G(n)} (\text{colim}X_{i})^n \\
                                                               &\cong \coprod_n P(n) \times_{G(n)} \text{colim}(X_{i}^n) \\
                                                               &\cong \coprod_n \text{colim}(P(n) \times_{G(n)} X_{i}^n) \\
                                                               &\cong \text{colim}\coprod_n P(n) \times_{G(n)} X_{i}^n = \text{colim}\underline{P}(X_{i}).
    \end{align*}
Since $\mathbf{Cat}$ is locally finitely presentable then directed colimits commute with finite limits, giving the first isomorphism. The second isomorphism follows from this fact as well as that colimits commute with coequalizers. The third isomorphism is simply coproducts commuting with other colimits.
\end{proof}

The monads arising from a non-symmetric operad are always cartesian, as described in \cite{leinster}. The monads that arise from symmetric operads, however, are not always cartesian and so it is useful to be able to characterize exactly when they are. An example of where this fails is the symmetric operad for which the algebras are commutative monoids. In the case of $2$-monads we can consider the  strict $2$-limit analogous to the pullback, the $2$-pullback, and characterize when the induced $2$-monad from a $\mb{G}$-operad is $2$-cartesian, as we now describe.

\begin{Defi}
A $2$-monad $T \colon \mathcal{K} \rightarrow \mathcal{K}$ is said to be \textit{$2$-cartesian} if
    \begin{itemize}
        \item the $2$-category $\mathcal{K}$ has $2$-pullbacks,
        \item the functor $T$ preserves $2$-pullbacks, and
        \item the naturality squares for the unit and multiplication of the $2$-monad are $2$-pullbacks.
    \end{itemize}
\end{Defi}

It is important to note that the  $2$-pullback of a diagram is actually the same as the ordinary pullback in $\mb{Cat}$, see \cite{kelly-elem}. Since we will be computing with coequalizers of the form $A \times_{G} B$ repeatedly, we give the following useful lemma.

\begin{lem}\label{coeq-lem}
Let $G$ be a group and let $A$, $B$ be categories for which $A$ has a right action by $G$ and $B$ has a left action by $G$. There is then an action of $G$ on the product $A \times B$ given by
    \[
        (a,b) \cdot g \colon= (a \cdot g, g^{-1} \cdot b).
    \]
The category $A \times B/G$, consisting of the equivalence classes of this action, is isomorphic to the coequalizer $A \times_G B$.
\end{lem}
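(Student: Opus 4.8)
The plan is to exhibit $A \times B/G$, together with the evident quotient functor, as a coequalizer of the parallel pair defining $A \times_G B$, and then to invoke the uniqueness of colimits. Recall that $A \times_G B$ is, by definition, the coequalizer in $\mb{Cat}$ of the two functors $\ell, r \colon A \times G \times B \rightrightarrows A \times B$, in which $G$ is regarded as a discrete category, $\ell(a,g,b) = (a \cdot g, b)$ uses the right action on $A$, and $r(a,g,b) = (a, g \cdot b)$ uses the left action on $B$ (and similarly on morphisms). When $A = P(n)$ and $B = X^n$ these are exactly the two maps used earlier to build $P(n) \times_{G(n)} X^n$. Write $q \colon A \times B \rightarrow A \times B/G$ for the functor sending an object $(a,b)$ to its orbit $[a,b]$ and a morphism $(\phi,\psi)$ to its orbit $[\phi,\psi]$ under the induced action $(\phi,\psi) \cdot g = (\phi \cdot g, g^{-1} \cdot \psi)$ on morphisms.

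First I would check that $A \times B/G$ really is a category. Its objects and morphisms are the orbit sets, while identities and candidate composites are inherited from $A \times B$; the only point requiring care is that composition is well defined, since the action of $G$ on $A \times B$ need not be free. To compose $[\phi,\psi]$ and $[\rho,\sigma]$ one chooses representatives whose middle objects agree and forms $[\rho \circ \phi, \sigma \circ \psi]$. Replacing that middle object by another object in its orbit amounts to replacing \emph{both} chosen representatives by their common $g$-translate, which replaces the composite $(\rho \circ \phi, \sigma \circ \psi)$ by $(\rho \circ \phi, \sigma \circ \psi) \cdot g$ and hence leaves its orbit unchanged. Associativity and unitality are then immediate from those of $A \times B$.

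Next I would verify that $q$ coequalizes $\ell$ and $r$: on objects $[a \cdot g, b] = [a, g \cdot b]$ because $(a, g \cdot b) \cdot g = (a \cdot g, b)$, and the analogous identity holds on morphisms. For the universal property, suppose $H \colon A \times B \rightarrow \m{C}$ satisfies $H\ell = Hr$. These equations say precisely that $H$ takes equal values on objects, and on morphisms, related by the generating moves $(\phi \cdot g, \psi) \mapsto (\phi, g \cdot \psi)$; since the orbit relation is generated by these moves, $H$ is constant on orbits. Hence $H$ factors uniquely as $\bar{H} q$ with $\bar{H}[a,b] = H(a,b)$ and $\bar{H}[\phi,\psi] = H(\phi,\psi)$, the functoriality of $\bar{H}$ following from that of $H$ and its uniqueness from the surjectivity of $q$ on objects and morphisms. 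This identifies $A \times B/G$ with the coequalizer $A \times_G B$.

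The main obstacle is the well-definedness of composition in $A \times B/G$ in the absence of freeness, which is handled by the simultaneous-translation argument above. A secondary point worth a remark is that no zig-zag morphisms are introduced in passing to the coequalizer: the functors $\ell, r$ act as identities on the $G$-coordinate, so all identified morphisms point in the same direction, and every morphism of $A \times_G B$ is therefore the image of a single morphism of $A \times B$ rather than of a zig-zag. This is what makes the orbit description exhaustive and the factorization $\bar{H}$ available on all of $A \times B/G$.
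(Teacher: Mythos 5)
Your route is the same as the paper's---exhibit the orbit quotient, with its quotient functor, as having the universal property of the coequalizer---and on the part the two proofs share (the quotient functor coequalizes $\ell,r$; any coequalizing functor is constant on orbits and so factors uniquely) your argument matches the paper's almost step for step. The difference is that you try to justify the point the paper passes over in silence, namely that $A \times B/G$ is a category at all, and that is exactly where your argument has a genuine gap. The claim that replacing the middle object ``amounts to replacing \emph{both} chosen representatives by their common $g$-translate'' is false when the action is not free. If $(\phi,\psi)\colon (a,b)\to(a',b')$ and $(\rho,\sigma)\colon (a',b')\to(a'',b'')$ are composable representatives, then so are $(\phi,\psi)\cdot h$ and $(\rho,\sigma)\cdot k$ for \emph{any} $h,k$ with $(a',b')\cdot h=(a',b')\cdot k$, i.e.\ any $h,k$ such that $hk^{-1}$ stabilizes the middle object; the resulting composite $\bigl((\rho\cdot k)(\phi\cdot h),(k^{-1}\sigma)(h^{-1}\psi)\bigr)$ agrees with $(\rho\phi,\sigma\psi)\cdot h$ when $h=k$, but when the stabilizer is nontrivial it need not lie in the orbit of $(\rho\phi,\sigma\psi)$ at all.

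This is not a repairable slip, because the lemma as stated (and hence also the paper's own proof) is false without a freeness-type hypothesis. Take $G=\mathbb{Z}/2=\{e,s\}$, let $B=\mathbf{1}$ with the trivial action, and let $A$ be the free category on the graph with vertices $a_1,a_2,m,c_1,c_2$ and edges $u_i\colon a_i\to m$, $v_j\colon m\to c_j$, with $s$ acting by swapping the indices $1\leftrightarrow 2$ everywhere (so $m$ is fixed). In the orbit quotient $[u_1]=[u_2]$ and $[v_1]=[v_2]$, yet $[v_1u_1]\neq[v_1u_2]$, so the composite $[v_1]\circ[u_1]$ depends on the chosen representatives ($u_1$ with $v_1$ gives $[v_1u_1]$, while $u_1$ with $v_2$ gives $[v_2u_1]=[v_1u_2]$): the orbit quotient is not a category. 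Moreover any functor $\chi$ with $\chi\ell=\chi r$ must satisfy $\chi(v_1u_1)=\chi(v_1)\chi(u_1)=\chi(v_1)\chi(u_2)=\chi(v_1u_2)$, so the true coequalizer identifies these two orbits; the failure is thus not the surjectivity your zig-zag remark addresses (every morphism of $A\times_G B$ is indeed the image of a single morphism of $A\times B$), but injectivity: distinct orbits can be forced together by functoriality. The lemma, your proof, and the paper's proof are all correct once one assumes, say, that $G$ acts freely on the objects of $A\times B$, or more generally that stabilizers of objects act trivially on the morphisms into and out of them; this holds in the places the paper actually invokes Lemma \ref{coeq-lem} (for instance the action of $Br_4$ on $B(4)$ in the proof of Theorem \ref{braidpscomm} is free), but it is not part of the statement, and your ``common translate'' argument cannot be salvaged without adding such a hypothesis.
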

\begin{proof}
The category $A \times_G B$ is defined as the coequalizer
    \[
        \xy
            (0,0)*+{A \times G \times B}="00";
            (30,0)*+{A \times B}="10";
            (60,0)*+{A \times_G B}="20";
            {\ar@<1ex>^{\lambda} "00" ; "10"};
            {\ar@<-1ex>_{\rho} "00" ; "10"};
            {\ar^{\varepsilon} "10" ; "20"};
        \endxy
    \]
where $\lambda(a,g,b) = (a \cdot g, b)$ and $\rho(a,g,b) = (a, g \cdot b)$. However, the map $A \times B \rightarrow A \times B/G$, sending $(a,b)$ to the equivalence class $[a,b] = [a \cdot g, g^{-1} \cdot b]$, also coequalizes $\lambda$ and $\rho$ since
    \[
        [a \cdot g, b] = [(a \cdot g) \cdot g^{-1}, g \cdot b] = [a, g \cdot b].
    \]

Given any other category $X$ and a functor $\chi \colon A \times B \rightarrow X$ which coequalizes $\lambda$ and $\rho$, we get a functor $\phi \colon A \times B/G \rightarrow X$ defined by $\phi[a,b] = \chi(a,b)$. That this is well defined is clear, since
    \[
        \phi[a \cdot g, g^{-1} \cdot b] = \chi(a \cdot g, g^{-1} \cdot b) = \chi(a \cdot (gg^{-1}), b) = \chi(a, b) = \phi[a,b].
    \]
This is also unique and so we find that $A \times B/G$ satisfies the universal property of the coequalizer.
\end{proof}

\begin{prop}
Let $P$ be a $\mb{G}$-operad. Then the $2$-monad $\underline{P}$ is $2$-cartesian if and only if the action of each $G(n)$ on $P(n)$ has the following property:
    \begin{itemize}
        \item if $p \in P(n)$ and $g \in G(n)$ such that $p \cdot g = p$, then $g \in ker \pi_n$, where $\pi_n \colon G(n) \rightarrow \Sigma_n$.
    \end{itemize}
\end{prop}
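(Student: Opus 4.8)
The plan is to use the observation (already noted in the excerpt) that a $2$-pullback in $\mb{Cat}$ is an ordinary pullback, so $\underline{P}$ is $2$-cartesian precisely when the underlying endofunctor preserves pullbacks and the naturality squares for $\eta$ and $\mu$ are pullbacks. Since $\mb{Cat}$ is extensive, coproducts interact well with pullbacks, so I would first reduce to the individual summand functors $F_n(X) = P(n)\times_{G(n)} X^n$. Using Lemma~\ref{coeq-lem} I rewrite $F_n(X) = (P(n)\times X^n)/G(n)$, where $g$ acts by $(p,\bar x)\mapsto(p\cdot g,\pi_n(g)^{-1}\cdot\bar x)$. The decisive simplification is to quotient first by $K=\ker\pi_n$: since $K$ acts trivially on $X^n$, one gets $F_n(X)\cong (B\times X^n)/H$ with $B = P(n)/K$ and $H = G(n)/K$, regarded as a subgroup of $\Sigma_n$ acting on $X^n$ by permutation of coordinates. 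A short calculation then shows that the stated stabilizer condition is equivalent to $H$ acting \emph{freely} on $B$, on both objects and morphisms (freeness on morphisms follows from freeness on objects by applying the hypothesis to sources and targets).

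For the ``if'' direction I would isolate the single fact that free actions buy us: whenever $[b,\bar u]=[b',\bar u']$ in a quotient $(B\times Z)/H$ with $H$ acting freely on $B$, the element $h\in H$ witnessing the identification ($b'=b\cdot h$, $\bar u' = h^{-1}\cdot\bar u$) is \emph{unique}. This uniqueness is exactly what is needed to invert the comparison functor $F_n(Y\times_X Z)\to F_n(Y)\times_{F_n(X)}F_n(Z)$: given compatible classes $[b,\bar y]$ and $[b',\bar z]$ agreeing over $F_n(X)$, the unique gluing element aligns $\bar y$ and $\bar z$ into a well-defined element of $(Y\times_X Z)^n\cong Y^n\times_{X^n}Z^n$, producing a unique preimage. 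The same uniqueness principle handles the naturality square for $\mu$; the square for $\eta$ is immediate because $\Sigma_1$ is trivial, so the $n=1$ identifications involve no nontrivial permutation, and here one also invokes Lemma~\ref{calclem} to identify the operadic unit with $e_1$.

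For the ``only if'' direction I would argue by contraposition and exhibit an explicit unpreserved pullback. If the condition fails there is an object $p\in P(n)$ and $g\in G(n)$ with $p\cdot g = p$ but $\sigma := \pi_n(g)\neq e$. The $G(n)$-orbit of $p$ contributes, naturally in $X$ and on underlying object sets, a summand of the form $X\mapsto X^n/H_p$, where $H_p = \pi_n(\mathrm{Stab}_{G(n)}(p))\leq\Sigma_n$ is nontrivial since it contains $\sigma$. This symmetric-power functor fails to preserve the product of $X$ with itself regarded as a pullback over the terminal category: taking $X$ a two-element discrete category and counting orbits by Burnside's lemma gives $|X^{2n}/H_p|\neq|X^n/H_p|^2$, the strict inequality holding exactly because $H_p\neq 1$. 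By extensivity this one bad summand obstructs preservation of that pullback already on objects, which suffices since $\mathrm{Obj}\colon\mb{Cat}\to\mb{Sets}$ preserves pullbacks; hence $\underline{P}$ is not $2$-cartesian.

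I expect the main obstacle to be the ``if'' direction at the level of morphisms rather than objects: promoting the uniqueness-of-gluing argument from a bijection on objects to an isomorphism of categories, and in particular verifying the $\mu$-square, requires that the free action of $H$ on the \emph{arrows} of $B$ also make the comparison functors fully faithful. Organizing this cleanly — for instance by treating the free quotient $B\to B/H$ as a covering and recognizing $(B\times -)/H$ as the associated ``bundle'', computed fibrewise and hence preserving all limits in the fibre variable — is where the real work lies.
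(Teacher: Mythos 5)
Your proposal is correct in substance, and its skeleton is the same as the paper's: reduce along the coproduct to the summands $P(n)\times_{G(n)}X^n$, kill preservation of pullbacks with a discrete counterexample when the condition fails, and build explicit inverses to the comparison functors when it holds. But two of your choices are genuinely different from the paper's. First, the paper never forms the quotients $B=P(n)/\ker\pi_n$ and $H=G(n)/\ker\pi_n$; it works directly with $G(n)$, and at each point where you invoke ``uniqueness of the witnessing element $h$'' the paper instead argues that the witness $g$ is unique \emph{up to} $\ker\pi_n$, so that $\pi(g)$ -- which is all the formulas use -- is well defined. Your reformulation (condition $\Leftrightarrow$ $H$ acts freely on $B$) is accurate and buys real cleanliness: a free action makes the orbit category honestly levelwise, which is precisely what dissolves the morphism-level worry you flag at the end; the cost is the extra care needed to justify $P(n)\times_{G(n)}X^n\cong(B\times X^n)/H$ (quotient in stages, plus $-\times X^n$ preserving colimits). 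Second, for the ``only if'' direction the paper does not count: using the same pullback square $\mb{2}\times\mb{2}$ over the point, it exhibits two explicitly distinct objects of $\underline{P}(\mb{4})$ -- one with $(x',y)$ in position $1$ and $(x,y')$ in position $\pi(g)(1)$, the other with $(x',y')$ in position $\pi(g)(1)$ -- and checks they have equal images under both projections, so the comparison functor already fails injectivity on objects. Your Burnside/Cauchy--Schwarz argument, $|X^{2n}/H_p|>|X^n/H_p|^2$ strictly when $H_p\neq 1$ and $|X|=2$, establishes the same non-injectivity by pure cardinality; it is slicker but less constructive, and it leans on the orbit decomposition of the pullback side over $\mathrm{Ob}\,\underline{P}(1)$, which you correctly justify by $\mathrm{Ob}$ preserving both limits and colimits.

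The one piece you leave open -- promoting the object-level bijections to isomorphisms of categories and verifying the $\mu$-naturality squares -- is real work but not a gap in the approach: the paper does exactly this by hand, constructing the inverse functor on the $\mu$-square by choosing $g$ with $p=\mu^P(q;p_1,\ldots,p_n)\cdot g$ and permuting the blocks of $\underline{x}$ by $\pi(g)$, with the stabilizer condition guaranteeing independence of the choice of $g$; in your setup the same computation becomes the statement that a free $H$-action makes each witness unique, and then morphisms come along for free since an $h$ fixing an arrow fixes its source. One further reduction the paper uses that your sketch should adopt: it suffices to check the $\mu$-squares over the terminal category (i.e.\ against $\underline{P}(!)\colon\underline{P}(Y)\to\underline{P}(1)$), since general $\mu$-naturality squares then follow by pasting of pullbacks; this keeps the $\underline{P}^2$-level bookkeeping finite.
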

\begin{proof}
Consider the following pullback of discrete categories.
    \[
        \xy
            (0,0)*+{\lbrace (x,y), (x,y'), (x',y), (x',y') \rbrace}="00";
            (40,0)*+{\lbrace y,y' \rbrace}="10";
            (0,-15)*+{\lbrace x, x' \rbrace}="01";
            (40,-15)*+{\lbrace z \rbrace}="11";
            {\ar "00" ; "10"};
            {\ar "10" ; "11"};
            {\ar "00" ; "01"};
            {\ar "01" ; "11"};
        \endxy
    \]
Letting $\mathbf{4}$ denote the pullback and similarly writing $\mathbf{2}_X = \{ x, x' \}$ and $\mathbf{2}_Y = \{y, y'\}$, we get the following diagram as the image of this pullback square under $\underline{P}$.
    \[
        \xy
            (0,0)*+{\coprod P(n) \times_{G(n)} \mathbf{4}^n}="00";
            (40,0)*+{\coprod P(n) \times_{G(n)} \mathbf{2}_Y^n}="10";
            (0,-15)*+{\coprod P(n) \times_{G(n)} \mathbf{2}_X^n}="01";
            (40,-15)*+{\coprod P(n)/G(n)}="11";
            {\ar "00" ; "10"};
            {\ar "10" ; "11"};
            {\ar "00" ; "01"};
            {\ar "01" ; "11"}:
        \endxy
    \]
The projection map $\underline{P}(\mb{4}) \rightarrow \underline{P}(\mb{2}_Y)$ maps an element
    \[
        [p;(x_1,y_1), \ldots, (x_n,y_n)]
    \]
to the element
    \[
        [p;y_1,\ldots,y_n]
    \]
and likewise for the projection to $\underline{P}(\mb{2}_X)$.

Now assume, in order to derive a contradiction, that, for some $n$, that the action of $G(n)$ on $P(n)$ does not have the prescribed property. Then find some $p \in P(n)$ along with $g \notin \text{ker} \, \pi_n$ such that $p \cdot g = p$. We will show that the existence of $g$ proves that $\underline{P}$ is not cartesian.

Now $\pi(g) \neq e$ since $g$ is not in the kernel, so there exists an $i$ such that $\pi(g)(i) \neq i$; without loss of generality, we may take $i=1$. Using this $g$ we can find two distinct elements
    \[
        [p;(x',y),(x,y),\ldots,(x,y),(x,y'),(x,y),\ldots,(x,y)]
    \]
and
    \[
        [p;(x,y),\ldots,(x,y),(x',y'),(x,y),\ldots,(x,y)]
    \]
in $\underline{P}(\mb{4})$.  In the first element we put $(x',y)$ in the first position and $(x,y')$ in position $\pi(g)(1)$, whilst in the second element we put $(x',y')$ in position $\pi(g)(1)$. Both of these elements, however, are mapped to the same elements in $\underline{P}(\mb{2}_X)$, since
    \begin{align*}
           [p; x', x, \ldots, x]&= [p \cdot g; (x', x, \ldots, x)]\\
          &= [p;\pi(g)\cdot (x', x, \ldots, x)]\\
          &= [p;x,x,\ldots,x',\ldots,x].
    \end{align*}
Similarly, both of the elements are mapped to the same element in $\underline{P}(\mathbf{2}_Y)$, simply
    \[
        [p;y,\ldots,y', \ldots, y].
    \]
The pullback of this diagram, however, has a unique element which is projected to the ones we have considered, so $\underline{P}(\mb{4})$ is not a pullback. Hence $\underline{P}$ does not preserve pullbacks if for some $n$ the action of $G(n)$ on $P(n)$ does not have the given property.

For the rest of the proof we will assume that each $G(n)$ acts on $P(n)$ in the prescribed way. We require that the naturality squares for $\eta$ and $\mu$ are $2$-pullbacks. In the case of $\eta$ this is to require that for a functor $f \colon X \rightarrow Y$, the pullback of the following diagram is the category $X$.
	\[
		\xy
			(40,0)*+{Y}="10";
			(0,-15)*+{\coprod P(n) \times_{G(n)} X^n}="01";
			(40,-15)*+{\coprod P(n) \times_{G(n)} Y^n}="11";
			{\ar^{\eta_Y} "10" ; "11"};
			{\ar_{\underline{P}(f)} "01" ; "11"};
		\endxy
	\]
The pullback of this diagram is isomorphic to the coproduct of the pullbacks of diagrams of the following form.
\[
		\xy
			(30,0)*+{Y}="10";
			(0,-15)*+{P(n) \times_{G(n)} X^n}="01";
			(30,-15)*+{P(n) \times_{G(n)} Y^n}="11";
			{\ar^{} "10" ; "11"};
			{\ar_{1 \times f^n} "01" ; "11"};
		\endxy
	\]

Note that $P(1) \times_{G(1)} Y$ is isomorphic to $(P(1)/G(1)) \times Y$, the latter clearly satisfying the universal property of the coequalizer - since every element of $G(1)$ acts trivially on $Y$ we can write objects of $P(1) \times_{G(1)} Y$ as pairs $([p],y)$, where $p \in P(1)$ and $y \in Y$.

Now since $\eta_Y$ lands in $P(1) \times_{G(1)} Y$, we need only check that $X$ is the pullback of the above cospan in the case that $n = 1$. The pullback is then the category consisting of pairs $(([p],x),y)$ such that $([p],f(x)) = ([id],y)$. Such pairs exist only when $y = f(x)$ and $[p] = [id]$, showing that $X$ is indeed the pullback. Thus naturality squares for $\eta$ are pullbacks.

For $\mu$ we will use the fact that if all of the diagrams
    \[
        \xy
            (0,0)*+{\underline{P}^2(X)}="00";
            (20,0)*+{\underline{P}^2(1)}="10";
            (0,-15)*+{\underline{P}(X)}="01";
            (20,-15)*+{\underline{P}(1)}="11";
            {\ar^{\underline{P}^2(!)} "00" ; "10"};
            {\ar^{\mu_1} "10" ; "11"};
            {\ar_{\mu_X} "00" ; "01"};
            {\ar_{\underline{P}(!)} "01" ; "11"};
        \endxy
    \]
are pullbacks then the outside of the diagram
    \[
        \xy
            (0,0)*+{\underline{P}^2(X)}="00";
            (20,0)*+{\underline{P}^2(Y)}="10";
            (40,0)*+{\underline{P}^2(1)}="20";
            (0,-15)*+{\underline{P}(X)}="01";
            (20,-15)*+{\underline{P}(Y)}="11";
            (40,-15)*+{\underline{P}(1)}="21";
            {\ar^{\underline{P}^2(f)} "00" ; "10"};
            {\ar^{\underline{P}^2(!)} "10" ; "20"};
            {\ar^{\mu_{1}} "20" ; "21"};
            {\ar_{\mu_X} "00" ; "01"};
            {\ar_{\underline{P}(f)} "01" ; "11"};
            {\ar_{\underline{P}(!)} "11" ; "21"};
            {\ar_{\mu_Y} "10" ; "11"};
        \endxy
    \]
is also a pullback and so each of the naturality squares for $\mu$ must therefore be a pullback. Now we can split up the square above, much like we did for $\eta$, and prove that each of the squares
    \[
        \xy
            (0,0)*+{\coprod P(m) \times_{G(m)} \prod_i \left(P(k_i) \times_{G(k_i)} X^{k_i}\right)}="00";
            (60,0)*+{\coprod P(m) \times_{G(m)} \prod_i \left(P(k_i) / G(k_i)\right)}="10";
            (0,-20)*+{P(n) \times_{G(n)} X^n}="01";
            (60,-20)*+{P(n) / G(n)}="11";
            {\ar "00" ; "10"};
            {\ar "00" ; "01"};
            {\ar "01" ; "11"};
            {\ar "10" ; "11"};
        \endxy
    \]
is a pullback. The map along the bottom is the obvious one, sending $[p; x_1, \ldots, x_n]$ simply to the equivalence class $[p]$. Along the right hand side the map is the one corresponding to operadic composition, sending $[q;[p_1],\ldots,[p_m]]$ to $[\mu^P(q;p_1,\ldots,p_n)]$. The pullback of these maps would be the category consisting of pairs
    \[
        \left([p;x_1,\ldots,x_{\Sigma k_i}],[q;[p_1],\ldots,[p_n]]\right),
    \]
where $q \in P(n)$, $p_i \in P(k_i)$, $p \in P(\Sigma k_i)$, and for which $[p] = [\mu^P(q;p_1,\ldots,p_n)]$. The upper left category in the diagram, which we will refer to here as $Q$, has objects
    \[
        [q;[p_1;\underline{x}_1],\ldots,[p_n;\underline{x}_n]].
    \]

There are obvious maps out of $Q$ making the diagram commute and as such inducing a functor from $Q$ into the pullback via the universal property. This functor sends an object such as the one just described to the pair
    \[
        \left([\mu^P(q;p_1,\ldots,p_n);\underline{x}], [q;[p_1],\ldots,[p_n]]\right).
    \]
Given an object in the pullback, we then have a pair, as described above, which has $[p] = [\mu^P(q;p_1,\ldots,p_n)]$ meaning that we can find an element $g \in G(\Sigma k_i)$ such that $p  = \mu^P(q;p_1,\ldots,p_n) \cdot g$. Thus we can describe an inverse to the induced functor by sending a pair in the pullback to the object
    \[
        [q;[p_1;\pi(g)(\underline{x})_1],\ldots,[p_n;\pi(g)(\underline{x})_n]],
    \]
where $\pi(g)(\underline{x})_i$ denotes the $i$th block of $\underline{x}$ after applying the permutation $\pi(g)$. For example, if $\underline{x} = (x_{11}, x_{12}, x_{21}, x_{22}, x_{23}, x_{31})$ and $\pi(g) = (1\, 3\, 5)$, then $\pi(g)(\underline{x}) = (x_{23}, x_{12}, x_{11}, x_{22}, x_{21}, x_{31})$. Thus $\pi(g)(\underline{x})_1 = (x_{23}, x_{12})$, $\pi(g)(\underline{x})_2 = (x_{11}, x_{22}, x_{21})$ and $\pi(g)(\underline{x})_3 = (x_{31})$. Now applying the induced functor we find that we get back an object in the pullback for which the first entry is $[q;[p_1],\ldots,[p_n]]$ and whose second entry is
    \[
       [\mu^P(q;p_1,\ldots,p_n);\pi(g)(\underline{x})] = [\mu^P(q;p_1,\ldots,p_n) \cdot g;\underline{x}] = [p;\underline{x}],
    \]
which is what we started with. Showing the other composite is an identity is similar, here using the fact that the identity acts trivially on $\mu^P(q;p_1,\ldots,p_n)$. Taking the coproduct of these squares then gives us the original diagram that we wanted to show was a pullback and, since each individual square is a pullback, so is the original.

To finish we must also show that $\underline{P}$ preserves pullbacks. Given a pullback
    \[
        \xy
            (0,0)*+{A}="00";
            (15,0)*+{B}="10";
            (0,-15)*+{C}="01";
            (15,-15)*+{D}="11";
            {\ar^{F} "00" ; "10"};
            {\ar^{S} "10" ; "11"};
            {\ar_{R} "00" ; "01"};
            {\ar_{H} "01" ; "11"};
        \endxy
    \]
we must show that the image of the diagram under $\underline{P}$ is also a pullback. Now this will be true if and only if each individual diagram
        \[
            \xy
                (0,0)*+{P(n) \times_{G(n)} A^n}="00";
                (30,0)*+{P(n) \times_{G(n)} B^n}="10";
                (0,-15)*+{P(n) \times_{G(n)} C^n}="01";
                (30,-15)*+{P(n) \times_{G(n)} D^n}="11";
                {\ar^{1 \times F^n} "00" ; "10"};
                {\ar^{1 \times S^n} "10" ; "11"};
                {\ar_{1 \times R^n} "00" ; "01"};
                {\ar_{1 \times H^n} "01" ; "11"}:
            \endxy
    \]
is also a pullback. The pullback of the functors $1 \times H^n$ and $1 \times S^n$ is a category consisting of pairs of objects $[p;\underline{c}]$ and $[q;\underline{b}]$, where $\underline{b}$ and $\underline{c}$ represent lists of elements in $B$ and $C$, respectively. These pairs are then required to satisfy the property that
    \[
        [p;\underline{H(c)}] = [q; \underline{S(b)}].
    \]
Using the previous lemma, we know that a pair
    \[
        \left([p;\underline{c}], [q;\underline{b}]\right)
    \]
is in the pullback if and only if there exists an element $g \in G(n)$ such that $p \cdot g = q$ and $Hc_i = (Sb_{\pi(g)^{-1}(i)})$. Using this we can define mutual inverses between $P(n) \times_{G(n)} A^n$ and the pullback $Q'$. Considering the category $A$ as the pullback of the diagram we started with, we can consider objects of $P(n) \times_{G(n)} A^n$ as being equivalence classes
    \[
        [p;(b_1,c_1),\ldots,(b_n,c_n)]
    \]
where $p \in P(n)$ and $Hc_i = Sb_i$ for all $i$.

Taking such an object, we send it to the pair
    \[
        \left([p;c_1,\ldots,c_n],[p;b_1,\ldots,b_n]\right)
    \]
which lies in the pullback since the identity in $G(n)$ satisfies the condition given earlier. An inverse to this sends a pair of equivalence classes in $Q'$ to the single equivalence class
    \[
        [p;(c_1,b_{\pi(g)^{-1}(1)}),\ldots,(c_n,b_{\pi(g)^{-1}(n)})]
    \]
in $P(n) \times_{G(n)} A^n$. If we apply the map into $Q'$ we get the pair
    \[
        \left([p;c_1,\ldots,c_n],[p;b_{\pi(g)^{-1}(1)},\ldots,b_{\pi(g)^{-1}(n)}]\right)
    \]
which is equal to the original pair since $p \cdot g = q$. The other composite is trivially an identity since the identity in $G(n)$ has trivial permutation.
\end{proof}
\begin{cor}
The $2$-monad associated to a symmetric operad $P$ is $2$-cartesian if and only if the action of $\Sigma_n$ is free on each $P(n)$.
\end{cor}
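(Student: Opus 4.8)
The plan is to deduce this corollary immediately from the preceding proposition by specializing the action operad $\mb{G}$ to the operad of symmetric groups $\mb{\Sigma}$. First I would recall, as recorded in the examples following the definition of a $\mb{G}$-operad, that a symmetric operad is precisely a $\mb{\Sigma}$-operad, where $\mb{\Sigma}$ is the action operad whose groups are the symmetric groups $G(n) = \Sigma_n$ and whose structure map $\pi_n \colon \Sigma_n \to \Sigma_n$ is the identity homomorphism. Thus the $2$-monad associated to the symmetric operad $P$ is exactly the $2$-monad $\underline{P}$ to which the proposition applies.

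Next I would invoke the proposition directly: $\underline{P}$ is $2$-cartesian if and only if, for every $n$, the action of $G(n)$ on $P(n)$ has the property that $p \cdot g = p$ forces $g \in \ker \pi_n$. The entire content of the corollary is then to identify this kernel in the symmetric case. Since each $\pi_n$ is the identity homomorphism, $\ker \pi_n = \{ e_n \}$ is the trivial subgroup. Substituting this into the criterion, the condition becomes the statement that for all $p \in P(n)$ and all $g \in \Sigma_n$, the equation $p \cdot g = p$ implies $g = e_n$; that is, the stabilizer of every object of $P(n)$ is trivial. This is precisely the definition of a free action of $\Sigma_n$ on $P(n)$, and quantifying over all $n$ yields the claim in both directions simultaneously.

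There is essentially no obstacle here, as the corollary is a direct specialization of the proposition and the only step requiring any thought is the identification of $\ker \pi_n$ with the trivial group when $\pi_n$ is the identity, which is immediate. The single point I would be careful to state cleanly is the characterization of freeness in terms of trivial stabilizers, so that the equivalence between the kernel condition of the proposition and the freeness condition of the corollary is completely transparent.
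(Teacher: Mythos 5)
Your proposal is correct and matches the paper's intended argument exactly: the corollary is stated without proof precisely because it is the immediate specialization of the preceding proposition to the action operad $\mb{\Sigma}$, where $\pi_n$ is the identity and hence $\ker \pi_n$ is trivial, so the kernel condition reduces to trivial stabilizers, i.e., freeness of each $\Sigma_n$-action. Nothing is missing.
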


The final part of this section is motivated by the issue of coherence. At its most basic, a coherence theorem is a way of describing when a notion of weaker structure is in some way equivalent to a stricter structure. The prototypical case here is the coherence theorem for monoidal categories. In a monoidal category we require associator isomorphisms
    \[
        \left( A \otimes B \right) C \cong A \otimes \left( B \otimes C \right)
    \]
for all objects in the category. The coherence theorem tells us that, for any monoidal category $M$, there is a strict monoidal category which is equivalent to $M$.  In other words, we can treat the associators in $M$ as identities, and similarly for the unit isomorphisms.

The abstract approach to coherence considers when the pseudoalgebras for a $2$-monad $T$ are equivalent to strict $T$-algebras, with the most comprehensive account appearing in \cite{lack-cod}.  Lack gives a general theorem which provides sufficient conditions for the existence of a left adjoint to the forgetful $2$-functor
    \[
        U \colon T\mbox{-}\mb{Alg}_s \rightarrow \mb{Ps}\mbox{-}T\mbox{-}\mb{Alg}
    \]
for which the components of the unit of the adjunction are equivalences. We focus on one version of this general result which has hypotheses that are quite easy to check in practice.  First we require that the base 2-category $\mathcal{K}$ has an enhanced factorization system. This is much like an orthogonal factorization system on a $2$-category, consisting of two classes of maps $(\mathcal{L},\mathcal{R})$, satisfying the lifting properties on $1$-cells and $2$-cells as follows. Given a commutative square
     \[
        \xy
            (0,0)*+{A}="00";
            (15,0)*+{C}="10";
            (0,-15)*+{B}="01";
            (15,-15)*+{D}="11";
            {\ar^{f} "00" ; "10"};
            {\ar^{r} "10" ; "11"};
            {\ar_{l} "00" ; "01"};
            {\ar_{g} "01" ; "11"};
        \endxy
     \]
where $l \in \m{L}$ and $r \in {R}$, there exists a unique morphism $m \colon B \rightarrow C$ such that $rm = g$ and $ml = f$. Similarly, given two commuting squares for which $rf = gl$ and $rf' = f'l$, along with $2$-cells $\delta \colon f \Rightarrow f'$ and $\gamma \colon g \Rightarrow g'$ for which $\gamma \ast 1_l = 1_r \ast \delta$, there exists a unique $2$-cell $\mu \colon m \Rightarrow m'$, where $m$ and $m'$ are induced by the $1$-cell lifting property, satisfying $\mu \ast 1_l = \delta$ and $1_r \ast \mu = \gamma$. However, there is an additional $2$-dimensional property of the factorization system which says that given maps $l \in \m{L}$, $r \in \m{R}$ and an invertible $2$-cell $\alpha \colon rf \Rightarrow gl$
    \[
        \xy
            (0,0)*+{A}="00";
            (15,0)*+{C}="10";
            (0,-15)*+{B}="01";
            (15,-15)*+{D}="11";
            {\ar^{f} "00" ; "10"};
            {\ar^{r} "10" ; "11"};
            {\ar_{l} "00" ; "01"};
            {\ar_{g} "01" ; "11"};
            {\ar@{=>}^{\alpha} (9.375,-5.625) ; (5.625,-9.375)};
            (22.5,-7.5)*+{=};
            (30,0)*+{A}="20";
            (45,0)*+{C}="30";
            (30,-15)*+{B}="21";
            (45,-15)*+{D}="31";
            {\ar^{f} "20" ; "30"};
            {\ar^{r} "30" ; "31"};
            {\ar_{l} "20" ; "21"};
            {\ar_{g} "21" ; "31"};
            {\ar^{m} "21" ; "30"};
            {\ar@{=>}^{\beta} (41,-8) ; (38,-12)};
        \endxy
    \]
there is a unique pair $(m,\beta)$ where $m \colon C \rightarrow B$ is a $1$-cell and $\beta \colon rm \Rightarrow g$ is an invertible $2$-cell such that $ml = f$ and $\beta \ast 1_{l} = \alpha$.

Further conditions require that $T$ preserve $\mathcal{L}$ maps and that whenever $r \in \mathcal{R}$ and $rk \cong 1$, then $kr \cong 1$. In our case we are considering $2$-monads on the $2$-category $\mathbf{Cat}$, which has the enhanced factorization system where $\m{L}$ consists of bijective-on-objects functors and $\m{R}$ is given by the full and faithful functors. This, along with the $2$-dimensional property making it an enhanced factorization system, is described in \cite{power-gen}. The last stated condition, involving isomorphisms and maps in $\m{R}$, is then clearly satisfied and so the only thing we need to check in order to satisfy the conditions of the coherence result are that the induced $2$-monads $\underline{P}$ preserve bijective-on-objects functors, which is a simple exercise involving coequalizers.

\begin{prop}
For any $\mb{G}$-operad $P$, the $2$-monad $\underline{P}$ preserves bijective-on-objects functors.
\end{prop}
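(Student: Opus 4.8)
The plan is to peel the problem apart summand-by-summand and then reduce everything to a statement about orbit sets. Since $\underline{P}(F) = \coprod_n (1 \times F^n)$, where $1 \times F^n \colon P(n) \times_{G(n)} X^n \to P(n) \times_{G(n)} Y^n$, and the object set of a coproduct of categories is the disjoint union of the object sets, the functor $\underline{P}(F)$ is block-diagonal with respect to this decomposition. Hence $\underline{P}(F)$ is bijective on objects if and only if each $1 \times F^n$ is, and so it suffices to treat a single $n$.

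First I would use Lemma \ref{coeq-lem} to identify $P(n) \times_{G(n)} X^n$ with the quotient $(P(n) \times X^n)/G(n)$, whose objects are the $G(n)$-orbits of $\mathrm{ob}\,P(n) \times (\mathrm{ob}\,X)^n$ under the action $(p; x_1, \ldots, x_n) \cdot g = (p \cdot g; x_{\pi(g)(1)}, \ldots, x_{\pi(g)(n)})$. On objects the functor $1 \times F^n$ is the map $1 \times (\mathrm{ob}\,F)^n$, sending $(p; x_1, \ldots, x_n)$ to $(p; Fx_1, \ldots, Fx_n)$. Before passing to the quotient this map is already a bijection on objects: since $F$ is bijective on objects, $(\mathrm{ob}\,F)^n$ is a bijection $(\mathrm{ob}\,X)^n \to (\mathrm{ob}\,Y)^n$, while the $P(n)$-coordinate is left untouched. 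I would then check that $1 \times (\mathrm{ob}\,F)^n$ is $G(n)$-equivariant for the two actions above: applying $F$ in each coordinate commutes with any permutation of the coordinates, in particular the permutation $\pi(g)$, and $g$ acts by the same right multiplication on the $P(n)$-factor on both sides, so the map intertwines the $G(n)$-actions.

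Finally, an equivariant bijection of $G(n)$-sets descends to a bijection on orbit sets. It is surjective on orbits because it is already surjective; it is injective on orbits because if two images lie in a common orbit, say $(1 \times F^n)(p';\underline{x}') = \big((1 \times F^n)(p;\underline{x})\big)\cdot g$, then by equivariance the right-hand side equals $(1 \times F^n)\big((p;\underline{x})\cdot g\big)$, and injectivity of $1 \times F^n$ forces $(p';\underline{x}') = (p;\underline{x})\cdot g$, so the two originals share an orbit. Thus each $1 \times F^n$ is bijective on objects, and therefore so is $\underline{P}(F)$. The argument is entirely routine; the only place that demands any care is matching the two descriptions of the $G(n)$-action—right multiplication on the $P(n)$-coordinate together with permutation of the coordinates of $X^n$ via $\pi$—when verifying equivariance, after which the passage to orbits is immediate.
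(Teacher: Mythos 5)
Your proof is correct, and it is precisely the ``simple exercise involving coequalizers'' that the paper alludes to without writing out: the paper states this proposition with no proof, and your argument --- reducing to each summand $1 \times F^n$, using Lemma \ref{coeq-lem} to identify the object set of $P(n) \times_{G(n)} X^n$ with the set of $G(n)$-orbits, and observing that a $G(n)$-equivariant bijection on objects descends to a bijection on orbit sets --- is the intended route. All the details check out, including the matching of the quotient action $(p;\underline{x})\cdot g = (p\cdot g; x_{\pi(g)(1)},\ldots,x_{\pi(g)(n)})$ with the paper's coequalizer conventions.
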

\begin{cor}
Every pseudo-$\underline{P}$-algebra is equivalent to a strict $\underline{P}$-algebra.
\end{cor}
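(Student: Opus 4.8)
The plan is to treat the Proposition as the substantive step and the Corollary as a short application, since the surrounding discussion has already reduced the latter to the former. Once $\underline{P}$ is known to preserve bijective-on-objects functors, all the hypotheses of the coherence theorem of Lack \cite{lack-cod} are in place: the enhanced factorization system $(\m{L},\m{R})$ of bijective-on-objects and fully faithful functors on $\mb{Cat}$ is supplied by \cite{power-gen}, the condition that $rk \cong 1$ implies $kr \cong 1$ for $r \in \m{R}$ is automatic for fully faithful $r$, and preservation of $\m{L}$-maps is exactly the Proposition. So the technical content lives entirely in the Proposition.

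To prove the Proposition I would argue componentwise. Writing $\underline{P}(f) = \coprod_n (1 \times_{G(n)} f^n)$, it suffices to treat each functor $P(n) \times_{G(n)} X^n \rightarrow P(n) \times_{G(n)} Y^n$ and then observe that a coproduct of bijective-on-objects functors is again bijective-on-objects. Using Lemma \ref{coeq-lem}, I identify the objects of $P(n) \times_{G(n)} X^n$ with orbits $[p; x_1, \ldots, x_n]$ of the diagonal action $(p; \underline{x}) \cdot g = (p \cdot g; \pi(g)^{-1} \cdot \underline{x})$, where $\pi(g)$ permutes coordinates. Surjectivity on objects is immediate: given $[q; y_1, \ldots, y_n]$, choose preimages $x_i$ with $f(x_i) = y_i$, using that $f$ is surjective on objects. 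For injectivity, suppose $[p; f(\underline{x})] = [p'; f(\underline{x}')]$ in $P(n) \times_{G(n)} Y^n$; these lie in the same arity component, so there is $g \in G(n)$ with $p' = p \cdot g$ and $f(x_i') = f(x_{\pi(g)(i)})$ for all $i$. Injectivity of $f$ on objects then forces $x_i' = x_{\pi(g)(i)}$, i.e. $\underline{x}' = \pi(g)^{-1} \cdot \underline{x}$, so that $[p'; \underline{x}'] = [p \cdot g; \pi(g)^{-1} \cdot \underline{x}] = [p; \underline{x}]$.

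The main obstacle is precisely this injectivity step. It is here that one must use the orbit description of Lemma \ref{coeq-lem} and track the $G(n)$-action carefully, since distinct-looking representatives can name the same object and one must rule out any new coincidence created by applying $f$; it is also the only place where injectivity of $f$ on objects enters (surjectivity being used only for surjectivity of $\underline{P}(f)$). Everything else is bookkeeping with coproducts, and this is what the text means by calling it a simple exercise involving coequalizers.

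For the Corollary I would then invoke Lack's theorem to produce a left adjoint $F$ to the forgetful $2$-functor $U \colon \underline{P}\mbox{-}\mb{Alg}_{s} \rightarrow \mb{Ps}\mbox{-}\underline{P}\mbox{-}\mb{Alg}$ whose unit has components that are equivalences in $\mb{Ps}\mbox{-}\underline{P}\mbox{-}\mb{Alg}$. For a pseudoalgebra $A$, the unit $A \rightarrow UFA$ is then an equivalence exhibiting $A$ as equivalent to the strict algebra $FA$, and transporting along the isomorphism of $2$-categories established in Section 2 yields the same conclusion for the pseudoalgebras of the $\mb{G}$-operad $P$ itself.
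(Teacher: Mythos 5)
Your proposal is correct and takes essentially the same route as the paper: the Corollary is deduced by feeding the Proposition (preservation of bijective-on-objects functors) into Lack's coherence theorem, using Power's enhanced factorization system of bijective-on-objects and fully faithful functors on $\mb{Cat}$. You in fact go further than the paper, which leaves the Proposition as ``a simple exercise involving coequalizers''; your componentwise orbit argument via Lemma \ref{coeq-lem} is a correct filling-in of that exercise.
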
 

\section{Pseudo-commutativity}

This final section gives conditions sufficient to equip the 2-monad $\underline{P}$ induced by a $\mb{G}$-operad $P$ in $\mb{Cat}$ with a pseudo-commutative structure.  Such a pseudo-commutativity will then give the 2-category $\mb{Ps}\mbox{-}\underline{P}\mbox{-}\mb{Alg}$ some additional structure that we briefly explain here.  For a field $k$, the category $\mb{Vect}$ of vector spaces over $k$ has many nice features.  Of particular interest to us are the following three structures.  First, the category $\mb{Vect}$ is monoidal using the tensor product $\otimes_{k}$.  Second, the set of linear maps $V \rightarrow W$ is itself a vector space which we denote $[V,W]$.  Third, there is a notion of multilinear map $V_{1} \times \cdots \times V_{n} \rightarrow W$, with linear maps being the 1-ary version.  While these three structures are each useful in isolation, they are tied together by natural isomorphisms
\[
\mb{Vect}(V_{1} \otimes V_{2}, W) \cong \mb{Vect}(V_{1}, [V_{2}, W]) \cong \mb{Bilin}(V_{1} \times V_{2}, W)
\]
expressing that $\otimes$ gives a closed monoidal structure which represents the multicategory of multilinear maps.  Moreover, the adjunction between $\mb{Vect}$ and $\mb{Sets}$ respects all of this structure in the appropriate way.  This incredibly rich interplay between the tensor product, the internal mapping space, and the multicategory of multilinear maps all arises from the free vector space monad on $\mb{Sets}$ being a \textit{commutative} monad \cite{kock-monads, kock-closed, kock-strong}.  The notion of a pseudo-commutative 2-monad \cite{HP} is then a generalization of this machinery to a 2-categorical context, and can be viewed as a starting point for importing tools from linear algebra into category theory.

The aim of this section is to give conditions that ensure that the 2-monad $\underline{P}$ associated to a $\mb{G}$-operad $P$ has a pseudo-commutative structure. We give the definition of pseudo-commutativity as in \cite{HP} but before doing so we note what we mean by a strength for a $2$-monad.
\begin{Defi}
A \textit{strength} for an endo-$2$-functor $T \colon \m{K} \rightarrow \m{K}$ on a 2-category with products and terminal object $1$ consists of a $2$-natural transformation $t$ with components
    \[
        t_{A,B} \colon A \times TB \rightarrow T(A \times B)
    \]
satisfying the following unit and associativity axioms \cite{kock-strong}.
\[
\xy
(0,0)*+{1 \times TA}="ul1";
(30,0)*+{T(1 \times A)}="ur1";
(30,-13)*+{TA}="br1";
(50,0)*+{A \times B}="ul2";
(80,0)*+{A \times TB}="ur2";
(80,-13)*+{T(A \times B)}="br2";
{\ar^{t_{1,A}} "ul1"; "ur1"};
{\ar^{\cong} "ur1"; "br1"};
{\ar_{\cong} "ul1"; "br1"};
{\ar^{1 \times \eta} "ul2"; "ur2"};
{\ar^{t_{A,B}} "ur2"; "br2"};
{\ar_{\eta} "ul2"; "br2"};
\endxy
\]
\[
\xy
(0,0)*+{(A \times B) \times TC}="ul";
(70,0)*+{T \Big((A \times B) \times C \Big)}="ur";
(0,-15)*+{A \times (B \times TC)}="ll";
(35,-15)*+{A \times T(B \times C)}="m";
(70,-15)*+{ T \Big(A \times (B \times C) \Big)}="lr";
{\ar^{t_{AB,C}} "ul"; "ur"};
{\ar^{Ta} "ur"; "lr"};
{\ar_{a} "ul"; "ll"};
{\ar_{1 \times t_{B,C}} "ll"; "m"};
{\ar_{t_{A,BC}} "m"; "lr"};
\endxy
\]
\[
\xy
(0,0)*+{A \times T^{2}B}="ul";
(60,0)*+{T^{2}(A \times B)}="ur";
(0,-15)*+{A \times TB}="ll";
(30,0)*+{T(A \times TB)}="m";
(60,-15)*+{ T(A \times B)}="lr";
{\ar^{t_{A,TB}} "ul"; "m"};
{\ar^{Tt_{A,B}} "m"; "ur"};
{\ar^{\mu} "ur"; "lr"};
{\ar_{1 \times \mu} "ul"; "ll"};
{\ar_{t_{A,B}} "ll"; "lr"};
\endxy
\]
Similarly, a costrength for $T$ consists of a $2$-natural transformation $t^{\ast}$ with components
    \[
        t^{\ast}_{A,B} \colon TA \times B \rightarrow T(A \times B)
    \]
again satisfying unit and associativity axioms.
\end{Defi}
The strength and costrength for the associated $2$-monad $\underline{P}$ are quite simple to define. We define the strength $t$ for $\underline{P}$ as follows. The component $t_{A,B}$ is a functor
    \[
        t_{A,B} \colon A \times \left(\amalg P(n) \times_{G(n)} B^n\right) \rightarrow \amalg P(n) \times_{G(n)} \left(A \times B \right)^n
    \]
which sends an object $(a, [p;b_1,\ldots,b_n])$ to the object $[p;(a,b_1),\ldots,(a,b_n)]$. We also define the costrength similarly, sending an object $([p;a_1,\ldots,a_n],b)$ to the object $[p;(a_1,b), \ldots, (a_n, b)]$. Both the strength and the costrength are defined in the obvious way on morphisms.
\begin{Defi}
    Given a $2$-monad $T \colon \m{K} \rightarrow \m{K}$ with strength $t$ and costrength $t^{\ast}$, a \textit{pseudo-commutativity} consists of an invertible modification $\gamma$ with components
        \[
            \xy
                (0,0)*+{TA \times TB}="00";
                (30,0)*+{T(A \times TB)}="10";
                (60,0)*+{T^2(A \times B)}="20";
                (0,-15)*+{T(TA \times B)}="01";
                (30,-15)*+{T^2(A \times B)}="11";
                (60,-15)*+{T(A \times B)}="21";
                {\ar^{t^{\ast}_{A,TB}} "00" ; "10"};
                {\ar^{Tt_{A,B}} "10" ; "20"};
                {\ar^{\mu_{A \times B}} "20" ; "21"};
                {\ar_{t_{TA,B}} "00" ; "01"};
                {\ar_{Tt^{\ast}_{A,B}} "01" ; "11"};
                {\ar_{\mu_{A \times B}} "11" ; "21"};
                {\ar@{=>}^{\gamma_{A,B}} (30,-5.5) ; (30,-9.5)};
            \endxy
        \]
satisfying the following three strength axioms, two unit (or $\eta$) and two multiplication (or $\mu$) axioms for all $A$, $B$, and $C$.
    \begin{enumerate}
        \item $\gamma_{A \times B,C} \circ (t_{A,B} \times 1_{TC}) = t_{A,B \times C} \circ (1_A \times \gamma_{B,C})$
        \item $\gamma_{A,B \times C} \circ (1_{TA} \times t_{B,C}) = \gamma_{A \times B, C} \circ (t^{\ast}_{A,B} \times 1_{TC})$
        \item $\gamma_{A,B \times C} \circ (1_{TA} \times T^{\ast}_{B,C}) = t^{\ast}_{A \times B,C} \circ (\gamma_{A,B} \times 1_{C})$
        \item $\gamma_{A,B} \circ (\eta_A \times 1_{TB})$  is an identity.
        \item $\gamma_{A,B} \circ (1_{TA} \times \eta_B)$ is an identity.
        \item $\gamma_{A,B} \circ (\mu_A \times 1_{TB})$ is equal to the pasting below.
            \[
                \xy
                    (0,0)*+{\scriptstyle T^2A \times TB}="00";
                    (30,0)*+{\scriptstyle T(TA \times TB)}="10";
                    (60,0)*+{\scriptstyle T^2(A \times TB)}="20";
                    (90,0)*+{\scriptstyle T^3(A \times B)}="30";
                    (0,-15)*+{\scriptstyle T(T^2A \times B)}="01";
                    (30,-15)*+{\scriptstyle T^2(TA \times B)}="11";
                    (60,-15)*+{\scriptstyle T^3(A \times B)}="21";
                    (90,-15)*+{\scriptstyle T^2(A \times B)}="31";
                    (0,-30)*+{\scriptstyle T^2(TA \times B)}="02";
                    (30,-30)*+{\scriptstyle T(TA \times B)}="12";
                    (60,-30)*+{\scriptstyle T^2(A \times B)}="22";
                    (90,-30)*+{\scriptstyle T(A \times B)}="32";
                    {\ar^{t^{\ast}_{TA,TB}} "00" ; "10"};
                    {\ar^{Tt^{\ast}_{A,TB}} "10" ; "20"};
                    {\ar^{T^2t_{A,B}} "20" ; "30"};
                    {\ar_{t_{T^2A,B}} "00" ; "01"};
                    {\ar_{Tt_{TA,B}} "10" ; "11"};
                    {\ar^{T\mu_{A \times B}} "30" ; "31"};
                    {\ar_{T^2t^{\ast}_{A,B}} "11" ; "21"};
                    {\ar_{T\mu_{A \times B}} "21" ; "31"};
                    {\ar_{Tt^{\ast}_{TA,B}} "01" ; "02"};
                    {\ar_{\mu_{TA \times B}} "11" ; "12"};
                    {\ar_{\mu_{T(A \times B)}} "21" ; "22"};
                    {\ar^{\mu_{A \times B}} "31" ; "32"};
                    {\ar_{\mu_{TA \times B}} "02" ; "12"};
                    {\ar_{Tt^{\ast}_{A,B}} "12" ; "22"};
                    {\ar_{\mu_{A \times B}} "22" ; "32"};
                    {\ar@{=>}^{T\gamma_{A,B}} (60,-5.5) ; (60,-9.5)};
                    {\ar@{=>}^{\gamma_{TA,B}} (12.5,-13) ; (12.5,-17)};
                \endxy
            \]
        \item $\gamma_{A,B} \circ (1_{TA} \times \mu_B)$ is equal to the pasting below.
                    \[
                \xy
                    (0,0)*+{\scriptstyle TA \times T^2B}="00";
                    (30,0)*+{\scriptstyle T(A \times T^2B)}="10";
                    (60,0)*+{\scriptstyle T^2(A \times TB)}="20";
                    (0,-15)*+{\scriptstyle T(TA \times TB)}="01";
                    (30,-15)*+{\scriptstyle T^2(A \times TB)}="11";
                    (60,-15)*+{\scriptstyle T(A \times TB)}="21";
                    (0,-30)*+{\scriptstyle T^2(TA \times B)}="02";
                    (30,-30)*+{\scriptstyle T^3(A \times B)}="12";
                    (60,-30)*+{\scriptstyle T^2(A \times B)}="22";
                    (0,-45)*+{\scriptstyle T^3(A \times B)}="03";
                    (30,-45)*+{\scriptstyle T^2(A \times B)}="13";
                    (60,-45)*+{\scriptstyle T(A \times B)}="23";
                    {\ar^{t^{\ast}_{A,T^2B}} "00" ; "10"};
                    {\ar^{Tt_{A,TB}} "10" ; "20"};
                    {\ar_{t_{TA,TB}} "00" ; "01"};
                    {\ar^{\mu_{A \times TB}} "20" ; "21"};
                    {\ar_{Tt^{\ast}_{A,TB}} "01" ; "11"};
                    {\ar_{\mu_{A \times TB}} "11" ; "21"};
                    {\ar_{Tt_{TA,B}} "01" ; "02"};
                    {\ar^{T^2t_{A,B}} "11" ; "12"};
                    {\ar^{Tt_{A,B}} "21" ; "22"};
                    {\ar^{\mu_{T(A \times B)}} "12" ; "22"};
                    {\ar_{T^2t^{\ast}_{A,B}} "02" ; "03"};
                    {\ar^{T\mu_{A \times B}} "12" ; "13"};
                    {\ar^{\mu_{A \times B}} "22" ; "23"};
                    {\ar_{T\mu_{A \times B}} "03" ; "13"};
                    {\ar_{\mu_{A \times B}} "13" ; "23"};
                    {\ar@{=>}^{T\gamma_{A,B}} (13,-28) ; (13,-32)};
                    {\ar@{=>}^{\gamma_{A,TB}} (30,-5.5) ; (30,-9.5)};
                \endxy
            \]
    \end{enumerate}
\end{Defi}
\begin{rem}
    It is noted in \cite{HP} that this definition has some redundancy and therein it is shown that any two of the strength axioms immediately implies the third. Furthermore, the three strength axioms are equivalent when the $\eta$ and $\mu$ axioms hold, as well as the following associativity axiom:
        \[
            \gamma_{A,B \times C} \circ (1_{TA} \times \gamma_{B,C}) = \gamma_{A \times B,C} \times (\gamma_{A,B} \times 1_{TC}).
        \]
\end{rem}

We need some notation before stating our main theorem.  Let $\underline{a} = a_{1}, \ldots , a_{m}$ and $\underline{b} = b_{1}, \ldots, b_{n}$ be two lists.  Then the set $\{ (a_{i}, b_{j})\}$ has $mn$ elements, and two natural lexicographic orderings.  One of these we write as $\underline{(a, \underline{b})}$, and it has the order given by
\[
(a_{p}, b_{q}) < (a_{r}, b_{s}) \textrm{ if } \left\{ \begin{array}{l} p < r, \textrm{ or } \\ p=r \textrm{ and } q < s. \end{array} \right.
\]
The other we write as $\underline{(\underline{a}, b)}$, and it has the order given by
\[
(a_{p}, b_{q}) < (a_{r}, b_{s}) \textrm{ if } \left\{ \begin{array}{l} q < s, \textrm{ or } \\ q=s \textrm{ and } p < r. \end{array} \right.
\]
The notation $(a, \underline{b})$ is meant to indicate that we have a single $a$ but a list of $b$'s, so then $\underline{(a, \underline{b})}$ would represent a list which itself consists of lists of that form. There is a unique permutation $\tau_{m,n} \in \Sigma_{mn}$ which has the property that $\tau_{m,n}(i) = j$ if the $i$th element of the ordered set $\underline{(a, \underline{b})}$ is equal to the $j$th element of the ordered set $\underline{(\underline{a}, b)}$.  By construction, we have $\tau_{n,m} = \tau_{m,n}^{-1}$. We illustrate these permutations with a couple of examples.
    \[
        \xy
            {\ar@{-} (0,0) ; (0,-10)};
            {\ar@{-} (5,0) ; (10,-10)};
            {\ar@{-} (10,0) ; (20,-10)};
            {\ar@{-} (15,0) ; (5,-10)};
            {\ar@{-} (20,0) ; (15,-10)};
            {\ar@{-} (25,0) ; (25,-10)};
            (12.5,-13)*{\tau_{2,3}};
            {\ar@{-} (45,0) ; (45,-10)};
            {\ar@{-} (50,0) ; (65,-10)};
            {\ar@{-} (55,0) ; (50,-10)};
            {\ar@{-} (60,0) ; (70,-10)};
            {\ar@{-} (65,0) ; (55,-10)};
            {\ar@{-} (70,0) ; (75,-10)};
            {\ar@{-} (75,0) ; (60,-10)};
            {\ar@{-} (80,0) ; (80,-10)};
            (62.5,-13)*{\tau_{4,2}}
        \endxy
    \]
Note then that $\tau_{m,n}$ is the permutation given by taking the transpose of the $m \times n$ matrix with entries $(a_{i}, b_{j})$.

We now give sufficient conditions for equipping the 2-monad $\underline{P}$ associated to a $\mathbf{G}$-operad $P$ with a pseudo-commutative structure.  Let $\mathbb{N}_{+}$ denote the set of positive natural numbers.
\begin{thm}\label{pscomm}
Let $P$ be a $\mb{G}$-operad. Then the following equip $\underline{P}$ with a pseudo-commutative structure.
    \begin{enumerate}
        \item For each pair $(m,n) \in \mathbb{N}_{+}^2$, we are given an element $t_{m,n} \in G(mn)$ such that $\pi(t_{m,n}) = \tau_{m,n}$.
        \item For each $p \in P(n)$, $q \in P(m)$, we are given a natural isomorphism
            \[
                \lambda_{p,q} \colon \mu(p;q,\ldots,q) \cdot t_{m,n} \cong \mu(q;p,\ldots,p).
            \]
            We write this as $\lambda_{p,q}: \mu(p; \underline{q}) \cdot t_{m,n} \cong \mu(q; \underline{p})$.
    \end{enumerate}
These must satisfy the following:
    \begin{itemize}
        \item For all $n \in \mathbb{N}_+$,
            \[
                t_{1,n} = e_n = t_{n,1}
            \]
             and for all $p \in P(n)$, the isomorphism $\lambda_{p, \textrm{id}}: p \cdot e_n \cong p$ is the identity map.
        \item For all $l, m_1, \ldots, m_l, n \in \mathbb{N}_+$, with $M = \Sigma m_i$,
            \[
                \mu^{G}(e_l; t_{m_1,n}, \ldots, t_{m_l,n}) \cdot \mu^{G}(t_{l,n};\underline{e_{m_1},\ldots,e_{m_l}}) = t_{n,M}.
            \]
            Here $\underline{e_{m_1},\ldots,e_{m_l}}$ is the list $e_{m_{1}}, \ldots, e_{m_{l}}$ repeated $n$ times.
        \item For all $l, m, n_1,\ldots, n_m \in \mathbb{N}_+$, with $N = \Sigma n_i$,
            \[
                \mu^{G}(t_{m,l};\underline{e_{n_1}},\ldots,\underline{e_{n_m}}) \cdot \mu^{G}(e_m;t_{n_1,l},\ldots,t_{n_m,l}) = t_{N,l}.
            \]
            Here $\underline{e_{n_{i}}}$ indicates that each $e_{n_{i}}$ is repeated $l$ times.
        \item For any $l, m_i, n \in \mathbb{N}_+$, with $1 \leq i \leq n$, and $p \in P(l)$, $q_i \in P(m_i)$ and $r \in P(n)$, the following diagram commutes.  (Note that we maintain the convention that anything underlined indicates a list, and double underlining indicates a list of lists.  Each instance should have an obvious meaning from context and the equations appearing above.)
            \[
                \xy
                    (0,0)*+{\mu\Big(p;\underline{\mu(q_i;\underline{r})}\Big) \cdot \mu(e_l;\underline{t_{n,m_i}})\mu(t_{n,l};\underline{\underline{e_{m_i}}})}="00";
                    (60,0)*+{\mu\Big(p;\underline{\mu(q_i;\underline{r})}\Big) \cdot t_{n,M}}="10";
                    (0,-15)*+{\mu\Big(p;\underline{\mu(q_i;\underline{r})\cdot t_{n,m_i}}\Big) \cdot \mu(t_{n,l};\underline{e_{m_1},\ldots,e_{m_l}})}="01";
                    (60,-20)*+{\mu\Big(\mu(p;q_1,\ldots,q_n);\underline{\underline{r}}\Big)\cdot t_{n,M}}="11";
                    (0,-30)*+{\mu\Big(p;\underline{\mu(r;\underline{q_i})}\Big) \cdot \mu(t_{n,l};\underline{e_{m_1},\ldots,e_{m_l}})}="02";
                    (60,-40)*+{\mu\Big(\mu(p;q_1,\ldots,q_n);\underline{\underline{r}}\Big)}="12";
                    (0,-45)*+{\mu\Big(\mu(p;\underline{r}) \cdot t_{n,l} ; \underline{q_1,\ldots,q_n}\Big)}="03";
                    (60,-60)*+{\mu\Big(r;\underline{\mu(p;q_1,\ldots,q_n)}\Big)}="13";
                    (0,-60)*+{\mu\Big(\mu(r,\underline{p});\underline{q_1,\ldots,q_n}\Big)}="04";
                    {\ar@{=} "00" ; "10"};
                    {\ar@{=} "00" ; "01"};
                    {\ar@{=} "10" ; "11"};
                    {\ar_{\mu(1;\underline{\lambda_{q_i,r}}) \cdot 1} "01" ; "02"};
                    {\ar@{=} "02" ; "03"};
                    {\ar@{=} "04" ; "13"};
                    {\ar_{\mu(\lambda_{p,r};1)} "03" ; "04"};
                    {\ar^{\lambda_{\mu(p;q_1,\ldots,q_n),r}} "11" ; "12"};
                    {\ar@{=} "12" ; "13"};
                \endxy
            \]
        \item For any $l,m, n_i \in \mathbb{N}_+$, with $1 \leq i \leq m$, and $p \in P(l)$, $q \in P(m)$ and $r_i \in P(n_i)$, the following diagram commutes.
                \[
                    \xy
                        (0,0)*+{\mu\Big(\mu(p;\underline{q}) \cdot t_{m,l} ; \underline{\underline{r_i}}\Big) \cdot \mu(e_m;\underline{t_{n_i,l}})}="00";
                        (60,0)*+{\mu\Big(\mu(p;\underline{q});\underline{\underline{r_i}}\Big) \cdot \mu(t_{m,l};\underline{\underline{e_{n_i}}})\mu(e_{m};\underline{t_{n_i,l}})}="10";
                        (60,-15)*+{\mu\Big(p;\underline{\mu(q;\underline{r_i})}\Big) \cdot \mu(t_{m,l};\underline{\underline{e_{n_i}}})\mu(e_{m};\underline{t_{n_i,l}})}="11";
                        (0,-20)*+{\mu\Big(\mu(q;\underline{p}); \underline{r_1},\ldots,\underline{r_m}\Big) \cdot \mu(e_m;\underline{t_{n_i,l}})}="01";
                        (0,-40)*+{\mu\Big(q;\underline{\underline{\mu(p;r_i)}}\Big) \cdot \mu(e_m;\underline{t_{n_i,l}})}="02";
                        (0,-60)*+{\mu\Big(q;\underline{\mu(p;\underline{r_i}) \cdot t_{n_i,l}}\Big)}="03";
                        (60,-30)*+{\mu\Big(p;\underline{\mu(q;r_1,\ldots,r_m)}\Big) \cdot t_{N,l}}="12";
                        (60,-45)*+{\mu\Big(\mu(q;r_1,\ldots,r_m); \underline{\underline{p}}\Big)}="13";
                        (60,-60)*+{\mu\Big(q;\underline{\mu(r_i;\underline{p})}\Big)}="14";
                        {\ar@{=} "00" ; "10"};
                        {\ar@{=} "10" ; "11"};
                        {\ar@{=} "11" ; "12"};
                        {\ar^{\lambda_{p,\mu(q;r_1,\ldots,r_m)}} "12" ; "13"};
                        {\ar@{=} "13" ; "14"};
                        {\ar_{\mu(\lambda_{p,q};1) \cdot 1} "00" ; "01"};
                        {\ar@{=} "01" ; "02"};
                        {\ar@{=} "02" ; "03"};
                        {\ar_{\mu(1;\underline{\lambda_{p,r_i}})} "03" ; "14"};
                    \endxy
                \]
    \end{itemize}
\end{thm}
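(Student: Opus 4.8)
The plan is to build the invertible modification $\gamma$ objectwise and then check that each of the seven pseudo-commutativity axioms of \cite{HP} reduces, on components, either to one of the hypotheses of the theorem or to an identity in the groups $G(k)$ combined with the naturality of the maps $\lambda$. First I would compute the two composite functors being compared. Writing an object of $\underline{P}A \times \underline{P}B$ as $([p;\underline a],[q;\underline b])$ with $p \in P(m)$ and $q \in P(n)$, the top leg $\mu_{A\times B}\circ \underline{P}t_{A,B}\circ t^{\ast}_{A,\underline{P}B}$ evaluates to $[\mu^{P}(p;\underline q);\underline{(a,\underline b)}]$ and the bottom leg $\mu_{A\times B}\circ \underline{P}t^{\ast}_{A,B}\circ t_{\underline{P}A,B}$ evaluates to $[\mu^{P}(q;\underline p);\underline{(\underline a,b)}]$, where $\underline q$ is $q$ repeated $m$ times, $\underline p$ is $p$ repeated $n$ times, and the two orderings are those fixed before the statement. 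I would define the component of $\gamma_{A,B}$ at this object to be the composite
\[
[\mu^{P}(p;\underline q);\underline{(a,\underline b)}] = [\mu^{P}(p;\underline q)\cdot t_{n,m};\underline{(\underline a,b)}] \xrightarrow{\;[\lambda_{p,q};1]\;} [\mu^{P}(q;\underline p);\underline{(\underline a,b)}],
\]
in which the first equality is the coequalizer relation for the $G(mn)$-action (valid because $\pi(t_{n,m})=\tau_{n,m}=\tau_{m,n}^{-1}$ is exactly the permutation carrying the ordering $\underline{(a,\underline b)}$ to $\underline{(\underline a,b)}$) and the second arrow applies $\lambda_{p,q}$. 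Invertibility is immediate. The naturality of each $\gamma_{A,B}$ as a transformation of functors, and the modification condition in $A$ and $B$, both follow from the naturality of $\lambda$ in $p$ and $q$ together with functoriality, since the defining formula is uniform in $A$ and $B$; the one genuine point is that the assignment descends through the coequalizers, which is where the final action operad axiom and the naturality of $\lambda$ are used.

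Next I would dispatch the two unit axioms. Taking the first argument to be $\eta_{A}(a)=[\mathrm{id};a]$ forces $m=1$ and $t_{1,n}=e_n$, so the coequalizer step is trivial and the component of $\gamma$ collapses to $[\lambda_{\mathrm{id},q};1]$; taking the second argument to be $\eta_{B}(b)=[\mathrm{id};b]$ forces $n=1$, $t_{m,1}=e_m$, and collapses it to $[\lambda_{p,\mathrm{id}};1]$. The hypothesis $\lambda_{p,\mathrm{id}}=\mathrm{id}$ settles the second axiom directly, and I would obtain $\lambda_{\mathrm{id},q}=\mathrm{id}$ for the first by specialising the naturality of $\lambda$ (or a degenerate instance of the coherence diagrams) together with $t_{1,n}=e_n=t_{n,1}$.

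The heart of the argument is matching the two multiplication axioms with the two large commuting diagrams in the statement. Evaluating the pasting on the left of axiom (6) at a general object and performing the operadic multiplications $\mu^{P}$ produces exactly the source and target of the first commuting diagram, with the nested strengths contributing the group elements whose product is controlled by the identity $\mu^{G}(e_l;\underline{t_{m_i,n}})\,\mu^{G}(t_{l,n};\underline{\underline{e_{m_i}}}) = t_{n,M}$; the occurrences of $\underline{P}\gamma$ and $\gamma$ in the pasting become the labelled arrows $\mu(1;\underline{\lambda_{q_i,r}})\cdot 1$ and $\mu(\lambda_{p,r};1)$. Thus axiom (6) holds precisely because the first diagram commutes. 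Symmetrically, axiom (7) reduces to the second commuting diagram using the companion identity $\mu^{G}(t_{m,l};\underline{\underline{e_{n_i}}})\,\mu^{G}(e_m;\underline{t_{n_i,l}}) = t_{N,l}$. In both cases the reduction is carried out on components, so nothing beyond Lemma \ref{calclem} and the coequalizer description of Lemma \ref{coeq-lem} is needed.

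Finally, for the three strength axioms I would invoke the reduction recorded in the remark following the definition: it suffices to verify the associativity axiom $\gamma_{A,B\times C}\circ(1_{\underline{P}A}\times\gamma_{B,C}) = \gamma_{A\times B,C}\circ(\gamma_{A,B}\times 1_{\underline{P}C})$ in the presence of the $\eta$- and $\mu$-axioms already established. On components this associativity becomes an equation between two ways of rebracketing a triple product $\mu^{P}(p;\dots)$ over a three-dimensional array of variables, and it follows from the naturality of $\lambda$ together with the two identities above, which encode that the elements $t_{\ast,\ast}$ compose along $\Sigma$ exactly as the transposition permutations $\tau$ do. I expect the main obstacle to be precisely this bookkeeping: tracking the two lexicographic orderings through the nested strengths and confirming that the group elements produced by the coequalizer relations agree with $t_{n,M}$ and $t_{N,l}$, so that the hypothesised diagrams and the pseudo-commutativity axioms line up term by term. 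Everything else is routine once the component formula for $\gamma$ is in place.
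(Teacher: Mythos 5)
Your proposal coincides with the paper's proof on most of its substance: you define the components of $\gamma_{A,B}$ by exactly the same two-step formula (slide a $t$-element across the coequalizer relation, then apply $[\lambda_{p,q};\underline{1}]$), you observe that these components depend only on $p$ and $q$ and not on the objects in the lists, which is precisely how the paper obtains naturality and the modification property, you settle the unit axioms from $t_{1,n}=e_n=t_{n,1}$ together with identity components at $\textrm{id}$ (your parenthetical that $\lambda_{\textrm{id},q}=\textrm{id}$ can be extracted from a degenerate instance of the coherence diagrams is correct, and is if anything more careful than the paper, which simply stipulates identity components in that case), and you reduce the two multiplication axioms, on first components, to the two commuting diagrams hypothesized in the statement, which is the paper's argument verbatim.

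The divergence, and the genuine gap, is in your treatment of the three strength axioms. The paper disposes of them in one line: since the component of $\gamma_{A,B}$ at $([p;\underline{a}],[q;\underline{b}])$ is $[\lambda_{p,q};\underline{1}]$, independent of the objects $a_i,b_j$, both sides of each strength axiom have literally the same components, so the axioms hold on the nose. You instead route through the associativity equation $\gamma_{A,B\times C}\circ(1\times\gamma_{B,C})=\gamma_{A\times B,C}\circ(\gamma_{A,B}\times 1)$, and this step fails as written, for two reasons. First, the remark you invoke asserts (read literally) that the three strength axioms are equivalent \emph{to one another} in the presence of the $\eta$-, $\mu$-, and associativity axioms; to conclude that a strength axiom actually \emph{holds} you need a further argument --- whisker the associativity equation with $\eta_A\times 1\times 1$ and invoke $\eta$-axiom (4) to collapse $\gamma_{A,B}\circ(\eta_A\times 1)$ and $\gamma_{A,B\times C}\circ(\eta_A\times 1)$ to identities, and dually with $1\times 1\times \eta_C$ --- and you do not supply it. Second, and more seriously, your proposed verification of associativity (``naturality of $\lambda$ together with the two identities above,'' meaning the group-element identities for the $t_{m,n}$) cannot succeed: on first components the two sides of associativity are, up to the group-element bookkeeping, $\lambda_{p,\mu(r;\underline{q})}\circ\mu(1_p;\underline{\lambda_{q,r}})$ and $\lambda_{\mu(q;\underline{p}),r}\circ\mu(\lambda_{p,q};\underline{1_r})$, and the compound isomorphisms $\lambda_{\mu(q;\underline{p}),r}$ and $\lambda_{p,\mu(r;\underline{q})}$ are independent pieces of data constrained \emph{only} by the two coherence diagrams in the hypotheses (specialized to constant lists $q_i=q$, $r_i=r$). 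Naturality of $\lambda$ and the identities among the $t_{m,n}$ say nothing about them. The repair is easy --- either check one strength axiom directly via the object-independence observation, as the paper does, or prove associativity from the two hypothesized diagrams and then carry out the whiskering reduction --- but as written your argument for the strength axioms does not go through.
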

\begin{proof}
We begin the proof by defining an invertible modifcation $\gamma$ for the pseudo-commutativity for which the components are natural transformations $\gamma_{A,B}$.  Such a transformation $\gamma_{A,B}$ has components with source
\[
[\mu(p; \underline{q}); \underline{(x, \underline{y})}]
\]
and target
\[
[\mu(q; \underline{p}); \underline{(\underline{x},y)}].
\]
These are given by the isomorphisms
    \[
       (\gamma_{A,B})_{[p;a_1,\ldots,a_n],[q;b_1,\ldots,b_m]} = [\lambda_{p,q},\underline{1}],
    \]
    by which we mean the composite
    \[
    [\mu(p; \underline{q}); \underline{(x, \underline{y})}] = [\mu(p; \underline{q}) \cdot t_{m,n}; t_{m,n}^{-1} \cdot \underline{(x, \underline{y})}] \stackrel{[\lambda, 1]}{\longrightarrow} [\mu(q; \underline{p}); \underline{(\underline{x},y)}].
    \]
In the case that either $p$ or $q$ is an identity then we choose the component of $\gamma$ to be the isomorphism involving the appropriate identity element, as assumed to exist in the statement of the theorem.
There are two things to note about the definition above before we continue.  First, it is easy to check that
\[
t_{m,n}^{-1} \cdot \underline{(x, \underline{y})} = \underline{(\underline{x},y)}
\]
since $\pi(t_{m,n}) = \tau_{m,n}$.  Second, the morphism above has second component the identity.  This is actually forced upon us by the requirement that $\gamma$ be a modification:  in the case that $A,B$ are discrete categories, the only possible morphism is an identity, and the modification axiom then forces that statement to be true for general $A,B$ by considering the inclusion $A_{0} \times B_{0} \hookrightarrow A \times B$ where $A_{0}, B_{0}$ are the discrete categories with the same objects as $A, B$.

We show that this is a modification by noting that it does not rely on objects in the lists $a_1, \ldots, a_n$ or $b_1, \ldots, b_m$, only on their lengths and the operations $p$ and $q$. As a result, if we have functors $f \colon X \rightarrow X'$ and $g \colon Y \rightarrow Y'$, then it is clear that
    \[
        (\underline{P}(f\times g) \circ \gamma_{X,Y})_{[p;\underline{x}],[q;\underline{y}]} = [\lambda_{p,q},\underline{1}] = (\gamma_{X',Y'} \circ (\underline{P}f\times \underline{P}g))_{[p;\underline{x}],[q;\underline{y}]}.
    \]
As such we can simply write $(\gamma_{X,Y})_{[p;\underline{x}],[q;\underline{y}]}$ in shorthand as $\gamma_{p,q}$.

The three strength axioms are immediately satisfied, again since $\gamma_{p,q}$ has no dependence on the objects in the lists and as such the isomorphisms are the same. The unit axioms follow from the assumption that $t_{1,n} = e_n = t_{n,1}$ and that components of $\gamma$ involving an identity operation are also identity maps. The multiplication axioms follow from the two diagrams assumed to commute in the statement of the theorem.  If we consider each axiom to consist of two equations, one in $P(n)$ and one in some power  $(A \times B)^{n}$, then the two diagrams at the end of the statement of the theorem actually force the first components of the two multiplication axioms to hold in $P(n)$ before taking any equivalence classes in the coequalizer aside from the ones used to define $\gamma_{A,B}$ above..
\end{proof}

A further property that a pseudo-commutativity can possess is that of symmetry.  This symmetry is then reflected in the monoidal structure on the 2-category of algebras, which will then also have a symmetric tensor product (in a suitable, 2-categorical sense).

\begin{Defi}
Let $T \colon \m{K} \rightarrow \m{K}$ be a $2$-monad on a symmetric monoidal $2$-category $\m{K}$ with symmetry $c$. We then say that a pseudo-commutativity $\gamma$ for $T$ is \textit{symmetric} when the following is satisfied for all $A$, $B \in \m{K}$:
    \[
        Tc_{A,B} \circ \gamma_{A,B} \circ c_{TB, TA} = \gamma_{B,A}.
    \]
\end{Defi}

With the notion of symmetry at hand we are able to extend the above theorem, stating when $\underline{P}$ is symmetric.
\begin{thm}
The pseudo-commutative structure for $\underline{P}$ given by Theorem \ref{pscomm}  is symmetric if for all $m,n \in \mathbb{N}_+$ the two conditions below hold.
    \begin{enumerate}
        \item $t_{m,n} = t_{n,m}^{-1}$.
        \item The following diagram commutes:
            \[
                \xy
                    (0,0)*+{\mu(p;\underline{q}) \cdot t_{m,n}t_{n,m}}="00";
                    (30,0)*+{\mu(p;\underline{q}) \cdot e_{mn}}="10";
                    (0,-15)*+{\mu(q;\underline{p}) \cdot t_{n,m}}="01";
                    (30,-15)*+{\mu(p;\underline{q})}="11";
                    {\ar@{=} "00" ; "10"};
                    {\ar_{\lambda_{p,q} \cdot 1} "00" ; "01"};
                    {\ar@{=} "10" ; "11"};
                    {\ar_{\lambda_{q,p}} "01" ; "11"};
                \endxy
            \]
    \end{enumerate}
\end{thm}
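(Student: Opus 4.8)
The plan is to verify the single equation $\underline{P}c_{A,B} \circ \gamma_{A,B} \circ c_{TB,TA} = \gamma_{B,A}$ directly on components. For each pair $A,B$ this is an equality of natural transformations between functors into $\underline{P}(B \times A)$, and a natural transformation in $\mb{Cat}$ is determined by its action on objects, so I would fix a generic object $([q;\underline{y}],[p;\underline{x}])$ of $\underline{P}(B) \times \underline{P}(A)$, with $p \in P(n)$ and $q \in P(m)$, and compute both whiskered composites there. First I would record how the pieces act: the symmetry $c$ on $\mb{Cat}$ is the swap functor, so $c_{TB,TA}$ merely reorders the pair to $([p;\underline{x}],[q;\underline{y}])$, while $\underline{P}c_{A,B}$ applies the entrywise swap to the list of any object $[r;\underline{w}]$ of $\underline{P}(A \times B)$. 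Following the definitions of strength, costrength, and $\mu$ exactly as in the proof of Theorem \ref{pscomm}, the two legs of the defining square produce $[\mu(p;\underline{q});\underline{(x,\underline{y})}]$ and $[\mu(q;\underline{p});\underline{(\underline{x},y)}]$, with $\gamma_{A,B}$ acting as $[\lambda_{p,q},\underline{1}]$ between them, and the analogous computation gives $\gamma_{B,A}$ as $[\lambda_{q,p},\underline{1}]$.

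The crucial observation in the middle step is that $\underline{P}c_{A,B}$ transposes the two lexicographic orderings, sending $\underline{(x,\underline{y})}$ to $\underline{(\underline{y},x)}$ and $\underline{(\underline{x},y)}$ to $\underline{(y,\underline{x})}$. Thus both sides become isomorphisms relating the \emph{same} pair of objects $[\mu(p;\underline{q});\underline{(\underline{y},x)}]$ and $[\mu(q;\underline{p});\underline{(y,\underline{x})}]$ of $\underline{P}(B \times A)$, with underlying operad morphisms built from $\lambda_{p,q}$ and $\lambda_{q,p}$ and with identity list-components, exactly as in the modification argument of Theorem \ref{pscomm}. This reduces the whole equation to an identity purely in the category $P(mn)$.

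At this point conditions (1) and (2) slot in as the list-level and operad-level halves of the argument. Condition (1) does the bookkeeping of coequalizer representatives: since $\pi(t_{m,n}) = \tau_{m,n} = \tau_{n,m}^{-1}$, the swap of the lists is realized on representatives by the elements $t_{m,n}$ and $t_{n,m}$, and the hypothesis $t_{m,n} = t_{n,m}^{-1}$ (equivalently $t_{m,n}t_{n,m} = e_{mn}$) guarantees that passing from the $\gamma_{A,B}$-representative to the $\gamma_{B,A}$-representative uses mutually inverse twists, leaving no stray group element; the degenerate cases where $p$ or $q$ is an identity are covered by the normalizations $t_{1,n} = e_n = t_{n,1}$ already assumed in Theorem \ref{pscomm}. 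With the lists aligned, the remaining content is the equality of the two operad morphisms, and this is precisely the commuting square of condition (2): reading its top edge via $t_{m,n}t_{n,m} = e_{mn}$ as $\mu(p;\underline{q}) \cdot t_{m,n}t_{n,m} = \mu(p;\underline{q})$, the square says that $\lambda_{q,p}$ composed with the $t_{n,m}$-twist of $\lambda_{p,q}$ is the identity on $\mu(p;\underline{q})$, which is exactly the relation needed to identify the whiskered $\gamma_{A,B}$ with $\gamma_{B,A}$.

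I expect the main obstacle to be the orientation and representative bookkeeping in the middle step: one must track carefully how the entrywise swap $\underline{P}c_{A,B}$ interchanges the orders $\underline{(x,\underline{y})}$ and $\underline{(\underline{x},y)}$, and how this interacts with the coequalizer relation defining $\times_{G(mn)}$, so as to confirm that both sides really land between the same ordered objects of $\underline{P}(B \times A)$ and in compatible orientation before condition (2) is applied. Once that alignment is pinned down, the verification is a routine, if notation-heavy, check on components, with conditions (1) and (2) supplying exactly the list-level and operad-level equalities.
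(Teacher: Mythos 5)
Your proposal is correct and takes essentially the same approach as the paper: the paper's one-sentence proof likewise reduces the symmetry axiom, component by component, to the assertion that its first component already commutes in $P(mn)$ before passing to equivalence classes in the coequalizer, which is exactly condition (2) once condition (1) aligns the representatives via $t_{m,n}t_{n,m} = e_{mn}$. Your write-up just makes explicit the whiskering, list-transposition, and representative bookkeeping that the paper leaves implicit.
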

\begin{proof}
The commutativity of the diagram above ensures that the first component of the symmetry axiom commutes in $P(n)$ before taking equivalence classes in the coequalizer, just as in Theorem \ref{pscomm}.
\end{proof}

\begin{Defi}
Let $P$ be a $\mb{G}$-operad in $\mb{Cat}$.  We say that $P$ is \textit{contractible} if each category $P(n)$ is equivalent to the terminal category.
\end{Defi}

\begin{cor}
If $P$ is contractible and there exist $t_{m,n}$ as in Theorem \ref{pscomm}, then $\underline{P}$ acquires a pseudo-commutativity. Furthermore, it is symmetric if $t_{n,m} = t_{m,n}^{-1}$.
\end{cor}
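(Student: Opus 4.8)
The plan is to deduce this corollary directly from Theorem \ref{pscomm} and from its symmetric refinement proved just above, so that the only genuine work is to manufacture the natural isomorphisms $\lambda_{p,q}$ and then observe that contractibility renders every remaining hypothesis automatic. The elements $t_{m,n} \in G(mn)$ with $\pi(t_{m,n}) = \tau_{m,n}$ are supplied by hypothesis, together with the three purely group-theoretic conditions of Theorem \ref{pscomm} (the normalisation $t_{1,n} = e_n = t_{n,1}$, and the two equations expressing $t_{n,M}$ and $t_{N,l}$ as operadic products of smaller $t$'s); this is exactly what ``there exist $t_{m,n}$ as in Theorem \ref{pscomm}'' is taken to mean. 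It thus remains to construct the $\lambda_{p,q}$ and to check the two conditions of Theorem \ref{pscomm} in which they appear.

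First I would record the key consequence of contractibility: a category equivalent to the terminal category is a contractible groupoid, that is, it is nonempty and carries exactly one morphism between any ordered pair of objects, with that morphism necessarily an isomorphism. Hence in each $P(k)$ any two parallel morphisms coincide and every diagram commutes. Using this, for $p \in P(n)$ and $q \in P(m)$ I would define $\lambda_{p,q} \colon \mu(p; \underline{q}) \cdot t_{m,n} \cong \mu(q; \underline{p})$ to be the unique isomorphism in $P(mn)$ between these two objects. Naturality in $p$ and $q$ is then immediate, since any naturality square in the contractible groupoid $P(mn)$ commutes; this also explains the earlier remark that the object-list component of $\lambda$ is forced to be trivial.

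Finally I would verify the hypotheses of Theorem \ref{pscomm} that involve $\lambda$. For the unit condition, taking $q = \textrm{id}$ gives $\mu(p; \underline{\textrm{id}}) = p$ by the operad unit axiom, and $t_{1,n} = e_n$, so both source and target of $\lambda_{p,\textrm{id}}$ are literally the object $p$; the unique endomorphism of $p$ in $P(n)$ is $\mathrm{id}_p$, whence $\lambda_{p,\textrm{id}} = \mathrm{id}$, as required. Each of the two multiplication coherence diagrams is a diagram of objects and isomorphisms lying entirely inside a single $P(k)$ (for the appropriate total arity $k$), and so commutes automatically. Combined with the group-theoretic conditions assumed above, this verifies all hypotheses of Theorem \ref{pscomm}, so $\underline{P}$ acquires a pseudo-commutativity; the symmetry claim then follows from the symmetric refinement proved above, whose second hypothesis is again a commuting square inside a single contractible $P(mn)$ and whose first hypothesis is precisely $t_{n,m} = t_{m,n}^{-1}$. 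The hardest part is conceptual rather than computational: one must check that each coherence diagram of Theorem \ref{pscomm} genuinely lives in a single hom-trivial category $P(k)$ so that contractibility applies, and that the unit normalisation $\lambda_{p,\textrm{id}} = \mathrm{id}$ holds strictly (using $t_{1,n} = e_n$) rather than merely up to canonical isomorphism.
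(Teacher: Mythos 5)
Your proposal is correct and follows essentially the same route as the paper: the paper also defines $\lambda_{p,q}$ as the unique isomorphism between its source and target in the contractible groupoid $P(mn)$ and observes that the coherence axioms involving $\lambda$ (and the symmetry square) hold automatically because parallel arrows in a contractible category coincide. Your write-up merely spells out details the paper leaves implicit, such as naturality and the strict unit condition $\lambda_{p,\mathrm{id}} = \mathrm{id}$, both of which indeed follow from the same hom-triviality argument.
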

\begin{proof}
The only thing left to define is the collection of natural isomorphisms $\lambda_{p,q}$.  But since each $P(n)$ is contractible, $\lambda_{p,q}$ must be the unique isomorphism between its source and target, and furthermore the last two axioms hold since any pair of parallel arrows are equal in a contractible category.
\end{proof}

\begin{cor}
If $P$ is a contractible symmetric operad then $\underline{P}$ has a symmetric pseudo-commutativity.
\end{cor}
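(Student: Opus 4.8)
The plan is to deduce this from the preceding Corollary on contractible operads, so the only real work is to exhibit suitable elements $t_{m,n}$ and check the hypotheses of Theorem~\ref{pscomm} that concern them. Since $P$ is a symmetric operad we have $\mb{G} = \mb{\Sigma}$ with each $\pi_n = \textrm{id}_{\Sigma_n}$, so the requirement $\pi(t_{m,n}) = \tau_{m,n}$ forces the unique choice $t_{m,n} = \tau_{m,n} \in \Sigma_{mn}$, and the condition $\pi(t_{m,n}) = \tau_{m,n}$ then holds trivially. The unit conditions $t_{1,n} = e_n = t_{n,1}$ are immediate, since the transpose of a $1 \times n$ or $n \times 1$ array is the identity reindexing; the accompanying requirement that $\lambda_{p,\textrm{id}}$ be the identity holds automatically by contractibility, as every hom-set of $P(n)$ is a singleton.

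The substance lies in the two product equations, the second and third bulleted conditions of Theorem~\ref{pscomm}. I would verify these directly as identities of permutations, using the description of $\tau_{m,n}$ as the reindexing carrying the row-major ordering of an $m \times n$ grid to its column-major ordering; concretely $\tau_{m,n}$ sends the position $(p-1)n + q$ to $(q-1)m + p$. With this formula one tracks the image of a general index through each side. The two equations then express the fact that transposing a grid whose columns (respectively rows) have been subdivided into blocks of sizes $m_1, \ldots, m_l$ (respectively $n_1, \ldots, n_m$) may be carried out either in one step or by first transposing within blocks and then permuting the blocks, where the block operations are exactly the operadic composites $\mu^{G}(\sigma; \tau_1, \ldots, \tau_k) = \sigma^{+} \cdot (\tau_1 \oplus \cdots \oplus \tau_k)$ in $\mb{\Sigma}$. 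Equivalently and more conceptually, these are the standard coherence constraints governing the symmetry of cartesian product on the category of finite sets, and so follow from Mac Lane's coherence theorem for symmetric monoidal categories. I expect this combinatorial bookkeeping to be the main obstacle; everything else is either forced or routine.

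Granting these identities, the Corollary for contractible operads applies and equips $\underline{P}$ with a pseudo-commutativity, the natural isomorphisms $\lambda_{p,q}$ being the unique maps available by contractibility. Finally, the symmetry hypothesis $t_{n,m} = t_{m,n}^{-1}$ reads $\tau_{n,m} = \tau_{m,n}^{-1}$, which holds by construction since transposing twice is the identity, so the ``furthermore'' clause of that Corollary yields that the resulting pseudo-commutativity is symmetric.
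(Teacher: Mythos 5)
Your proposal is correct and takes essentially the same approach as the paper: the paper's entire proof is the single line ``We choose $t_{m,n} = \tau_{m,n}$,'' invoking the preceding corollary for contractible operads exactly as you do. The verifications you spell out (the forced choice of $t_{m,n}$ since $\pi_n = \mathrm{id}$, the block-transposition identities among the $\tau_{m,n}$, and the relation $\tau_{n,m} = \tau_{m,n}^{-1}$) are precisely what the paper leaves implicit as routine.
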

\begin{proof}
We choose $t_{m,n} = \tau_{m,n}$.
\end{proof}

\begin{cor}
Let $P$ be a non-symmetric operad. Then $\underline{P}$ is never pseudo-commutative.
\end{cor}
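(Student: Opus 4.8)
The plan is to argue directly from the definition of a pseudo-commutativity that the required component $2$-cell $\gamma_{A,B}$ cannot exist, rather than merely observing that the sufficient conditions of Theorem \ref{pscomm} fail. First I would unwind the two composites $TA \times TB \rightarrow T(A\times B)$ bounding the square that defines $\gamma_{A,B}$, using the explicit formulas for the strength $t$ and costrength $t^{\ast}$ of $\underline{P}$ recorded above. Tracing an object $\big([p;a_{1},\ldots,a_{n}],[q;b_{1},\ldots,b_{m}]\big)$ of $TA\times TB$ along each path shows (exactly as in the proof of Theorem \ref{pscomm}) that the component $(\gamma_{A,B})_{[p;\underline{a}],[q;\underline{b}]}$ must be a morphism in $\underline{P}(A\times B)$ from $[\mu^{P}(p;\underline{q});\underline{(a,\underline{b})}]$ to $[\mu^{P}(q;\underline{p});\underline{(\underline{a},b)}]$.

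Next I would exploit the fact that for a non-symmetric operad every group $G(k)$ is trivial, so $\underline{P}(A\times B) = \coprod_{k} P(k)\times (A\times B)^{k}$ carries no quotient. Consequently a morphism between two objects lying in the summand indexed by $mn$ consists of a morphism of the operadic labels in $P(mn)$ together with a morphism of the $(A\times B)$-tuples taken coordinatewise. The source and target of $\gamma_{A,B}$ above both lie in that summand, and their tuples $\underline{(a,\underline{b})}$ and $\underline{(\underline{a},b)}$ are precisely the two lexicographic orderings of the same $mn$ pairs, related by the transpose permutation $\tau_{m,n}$. Unlike the symmetric or braided case, there is now no element $t_{m,n}\in G(mn)$ available to absorb $\tau_{m,n}$ into the label coordinate.

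I would then choose a witnessing arity together with a discrete test pair to force a contradiction. Assuming $P$ has an operation of arity at least two, pick $k\ge 2$ with $P(k)\neq\emptyset$ and an element $p=q\in P(k)$, and take $A=B$ to be the discrete category on $k$ distinct objects with $\underline{a}=\underline{b}=(1,2,\ldots,k)$. Then the $k^{2}$ pairs $(a_{i},b_{j})$ are pairwise distinct objects of the discrete category $A\times B$, and the two orderings $\underline{(a,\underline{b})}$ and $\underline{(\underline{a},b)}$ present them in the order of a $k\times k$ matrix and of its transpose. Since $\tau_{k,k}\neq e$ for $k\ge 2$, these orderings differ in some coordinate, where they record two distinct objects; as $A\times B$ has no non-identity morphisms there is no coordinatewise morphism between the tuples, hence no morphism in $\underline{P}(A\times B)$ between the source and target of $\gamma_{A,B}$. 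This contradicts the existence of the component $(\gamma_{A,B})_{[p;\underline{a}],[q;\underline{b}]}$, so no pseudo-commutativity can exist.

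The main obstacle is conceptual rather than computational: one must resist arguing only that the construction of Theorem \ref{pscomm} breaks down (it does, since it would demand $t_{m,n}\in G(mn)$ with $\pi(t_{m,n})=\tau_{m,n}$, impossible when $G(mn)$ is trivial and $\tau_{m,n}\neq e$), because the corollary asserts the non-existence of \emph{any} pseudo-commutativity; the discreteness argument above is what upgrades this to a genuine impossibility. The one case warranting a separate remark is the degenerate operad with no operation of arity $\ge 2$, for which $\underline{P}$ reduces to the monad $X\mapsto P(1)\times X$ (possibly with nullary constants) and pseudo-commutativity is governed by commutativity of the monoid $P(1)$; the statement is thus to be read as applying to operads that actually possess a higher-arity operation.
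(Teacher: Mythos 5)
Your proof is correct and takes essentially the same route as the paper's: the paper's own (much terser) argument is precisely that, with no coequalizer present in the non-symmetric case, discrete $A,B$ admit no isomorphism $\underline{(x,\underline{y})} \cong \underline{(\underline{x},y)}$, which is exactly your discreteness argument spelled out in detail. Your closing caveat is also a genuine point the paper glosses over: for a non-symmetric operad with no operations of arity $\geq 2$ (e.g.\ $P(0)=P(1)$ terminal, $P(n)=\emptyset$ otherwise) the transpose permutations are all trivial and $\underline{P}$ is in fact commutative, so the corollary as stated does require the qualification you give.
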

\begin{proof}
In the non-symmetric case, the 2-monad is just given using coproducts and products, there is no coequalizer. Thus when $A,B$ are discrete, there is no isomorphism $\underline{(x,\underline{y})} \cong \underline{(\underline{x},y)}$.
\end{proof}

We conclude with a computation using Theorem \ref{pscomm}.  This result was only conjectured in \cite{HP}, but we are able to prove it quite easily with the machinery developed thus far.

\begin{thm}\label{braidpscomm}
The 2-monad $\underline{B}$ for braided strict monoidal categories on $\mb{Cat}$ has two pseudo-commutative structures on it, neither of which are symmetric.
\end{thm}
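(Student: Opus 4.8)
The plan is to realize $\underline{B}$ as the $2$-monad of a contractible $\mb{Br}$-operad $B$ and then to apply Theorem \ref{pscomm}, whose hypotheses collapse almost entirely in the contractible case. Concretely, $B$ is the $\mb{Br}$-operad with $B(n)$ the chaotic groupoid on the set $Br_{n}$, so each $B(n)$ is equivalent to the terminal category and $\underline{B}$-algebras are exactly braided strict monoidal categories; in particular a morphism of $\underline{B}(X)$ remembers an underlying braid, not merely its underlying permutation. Since $B$ is contractible, the natural isomorphisms $\lambda_{p,q}$ of Theorem \ref{pscomm} are forced to be the unique isomorphisms between their source and target, and the last two (pentagon-type) coherence diagrams commute automatically because any two parallel maps in a contractible category agree. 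Thus the entire construction reduces to choosing, for each $(m,n)$, a braid $t_{m,n}\in Br_{mn}$ with $\pi(t_{m,n})=\tau_{m,n}$ satisfying the unit condition $t_{1,n}=e_{n}=t_{n,1}$ together with the two block-composition equations displayed in Theorem \ref{pscomm}.

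Next I would exhibit the two candidate systems geometrically. Recall that $\tau_{m,n}$ is the permutation transposing the $m\times n$ grid of inputs. Let $t^{+}_{m,n}\in Br_{mn}$ be the \emph{positive block-transposition braid}, obtained by drawing this regrouping with every crossing a positive (over) crossing, and let $t^{-}_{m,n}$ be its mirror image, with every crossing negative. Both lift $\tau_{m,n}$, and one sees directly from the pictures that $t^{-}_{m,n}=(t^{+}_{n,m})^{-1}$. The unit condition is immediate, since $\tau_{1,n}=\tau_{n,1}=\mathrm{id}$ and the corresponding braids have no crossings.

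The heart of the proof---and the step I expect to be the main obstacle---is verifying the two block-composition equations in $Br$ for each system. These assert that the transposition braid of a large grid factors as the transpositions of its sub-blocks followed by the transposition of the coarse blocks; this is precisely the pair of hexagon-type coherence identities for the braiding, now read off at the level of braid groups. I would verify them geometrically by sliding a block of $M=\sum m_{i}$ (resp. $N=\sum n_{i}$) strands past the remaining strands and observing that it decomposes exactly as the right-hand braid $t_{n,M}$ (resp. $t_{N,l}$); because composing positive block moves produces only positive crossings, the identity holds on the nose for the positive system, and the negative system follows by taking mirror images throughout. This is the ``geometric'' argument advertised in the introduction, and it is where the flexibility of working with $\mb{Br}$ rather than $\mb{\Sigma}$ does the real work.

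Finally I would argue distinctness and the failure of symmetry. The two systems yield genuinely different pseudo-commutativities: since morphisms of $\underline{B}(X)$ record an underlying braid, the $2$-cell induced by $t^{+}$ on a free braided strict monoidal category is the braiding $c$, while that induced by $t^{-}$ is its inverse $c^{-1}$, and $c\neq c^{-1}$ generically (detected, for example, by the writhe homomorphism $Br_{4}\to\mathbb{Z}$ applied to $t^{+}_{2,2}=\sigma_{2}$ and $t^{-}_{2,2}=\sigma_{2}^{-1}$). For symmetry, one checks against the condition $t_{m,n}=t_{n,m}^{-1}$: for the positive system $t_{n,m}^{-1}=(t^{+}_{n,m})^{-1}=t^{-}_{m,n}\neq t^{+}_{m,n}$, and symmetrically for the negative system, so neither structure is symmetric. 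This contrasts with the symmetric operad, where $\tau_{m,n}=\tau_{n,m}^{-1}$ forces symmetry, and explains why passing to braid groups destroys it.
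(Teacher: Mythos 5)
Your overall route is the paper's: realize $\underline{B}$ via the contractible $\mb{Br}$-operad $B$, let contractibility force the $\lambda_{p,q}$ and their coherence, and reduce everything to producing braids $t_{m,n}$ lifting $\tau_{m,n}$ (positive crossings for one structure, negative for the other). But there are two genuine gaps. The first is in the heart of the proof, the block-composition equations. Your only stated mechanism for concluding equality is that ``composing positive block moves produces only positive crossings,'' and positivity is not enough: positive braids are \emph{not} determined by their underlying permutations (e.g.\ $\sigma_{1}$ and $\sigma_{1}^{3}$ in $Br_{2}$ are distinct positive braids with the same permutation). The ``sliding a block of strands'' picture is exactly the statement to be proved, not an argument for it. What makes this step work in the paper is Lemma \ref{pmlem}: a braid that is both positive and \emph{minimal} (no pair of strands crosses twice) is uniquely determined by its underlying permutation. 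Both sides of each composition equation are positive and have permutation $\tau$ of the appropriate type, so the whole verification comes down to proving minimality of the product $\mu(e_{l}; t_{n,m_{1}}, \ldots, t_{n,m_{l}})\,\mu(t_{n,l}; \underline{e_{m_{1}}, \ldots, e_{m_{l}}})$, which requires the combinatorial analysis showing that any pair of strands crossing in one factor never crosses in the other. Your proposal never invokes minimality, so this step as written would not close.

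The second gap is the non-symmetry argument, which is logically invalid as stated. The condition $t_{m,n} = t_{n,m}^{-1}$ is only a \emph{sufficient} condition for symmetry (the paper's theorem reads ``is symmetric if \dots''), so showing that your $t^{+}$ system violates it does not show the resulting pseudo-commutativity fails to be symmetric. One must return to the symmetry axiom itself and show it fails; this is delicate precisely because the relevant $2$-cell lives in a coequalizer $P(mn) \times_{G(mn)} (A\times B)^{mn}$, where a morphism class $[f_{1},f_{2}]$ can be an identity even when the chosen representative $f_{1}$ is not an identity in $P(mn)$. The paper handles this via Lemma \ref{coeq-lem}: if the composite were the identity in the coequalizer, some translate $f\cdot x$ would be a genuine identity in $B(mn)$, forcing equal source and target, i.e.\ $t_{n,m}t_{m,n} = e_{mn}$ for all $m,n$; taking $m=n=2$ gives $t_{2,2} = \sigma_{2}$, which does not have order two in $Br_{4}$, a contradiction. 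Your writhe computation does correctly distinguish the two structures from each other, but to rule out symmetry you need this representative-independent argument, not the failure of a sufficient criterion.
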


In order to apply our theory, the 2-monad $\underline{B}$ must arise from a $\mb{G}$-operad.  The following proposition describes it as such, and can be found in the work of Fiedorowicz \cite{fie-br}.

\begin{prop}
The 2-monad $\underline{B}$ is the 2-monad associated to the $\mb{Br}$-operad $B$ with the category $B(n)$ having objects the elements of the $n$th braid group $Br_{n}$ and a unique isomorphism between any pair of objects; the action of $Br_{n}$ on $B(n)$ is given by right multiplication on objects and is then uniquely determined on morphisms.
\end{prop}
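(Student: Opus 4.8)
The plan is to identify the strict algebras, strict morphisms, and $2$-cells for the operad $B$ with, respectively, braided strict monoidal categories, strict braided monoidal functors, and braided monoidal natural transformations, and then to transport this identification to the $2$-monad $\underline{B}$ using the machinery already established. Concretely, by Proposition \ref{endoalg} a $B$-algebra structure on a category $X$ is the same as a $\mb{Br}$-operad map $F \colon B \to \m{E}_X$. First I would unpack such a map. The identity braid $e \in B(2)$ is sent to a functor $\otimes = F(e) \colon X^2 \to X$, and $\mb{Br}$-equivariance, using $\pi_2 \colon Br_2 \to \Sigma_2$, forces the generating crossing $\sigma \in Br_2$ to be sent to $\otimes$ precomposed with the swap of the two inputs, since $F(\sigma) = F(e \cdot \sigma) = F(e) \cdot \sigma$. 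The unique isomorphism $e \to \sigma$ in the contractible groupoid $B(2)$ is then sent to a natural isomorphism whose components $\beta_{x,y} \colon x \otimes y \to y \otimes x$ constitute a braiding. The nullary component $F \colon B(0) \to \m{E}_X(0) \cong X$ picks out the unit object, and the operad unit and associativity axioms for $F$ give strict associativity and strict unit laws for $\otimes$, since the underlying operad of $B$ is the (strictly associative) braid operad.

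Next I would verify that the hexagon axioms hold and that no further relations are imposed. This is where contractibility of each $B(n)$ does the work: a functor out of a contractible groupoid is pinned down on objects by equivariance (each $b \in Br_n$ is sent to $\otimes^{n}$ precomposed with the permutation $\pi(b)$) and on morphisms by the unique isomorphisms, so every coherence cell is named by a canonical isomorphism in some $\m{E}_X(n)$. Compatibility of $F$ with operadic composition $\mu$ then says exactly that the isomorphism attached to a braid built by block sum and substitution agrees with the composite of the isomorphisms attached to its pieces; applied to the relation $\sigma_1\sigma_2\sigma_1 = \sigma_2\sigma_1\sigma_2$ in $Br_3$ this yields precisely the two hexagon identities, while the identity braids and remaining relations give naturality and the remaining axioms. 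Conversely, given a braided strict monoidal category one reconstructs $F$ by sending $b \in Br_n$ to the functor $X^n \to X$ that applies the braiding pattern of $b$, which is well defined and functorial by the coherence theorem for braided categories; these two assignments are mutually inverse. The identification of strict morphisms with strict braided monoidal functors and of $P$-transformations with braided monoidal transformations is then routine.

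Having established $B\text{-}\mb{Alg}_s \cong \mb{BrStrMonCat}$ as $2$-categories over $\mb{Cat}$, I would invoke the isomorphism $B\text{-}\mb{Alg}_s \cong \underline{B}\text{-}\mb{Alg}_s$ proved earlier (extending Proposition \ref{op=monad1}) to conclude that the strict $\underline{B}$-algebras are exactly braided strict monoidal categories, compatibly with the forgetful $2$-functor to $\mb{Cat}$. Since this forgetful $2$-functor is $2$-monadic and a $2$-monad is recovered up to isomorphism from its induced $2$-monadic forgetful $2$-functor, it follows that $\underline{B}$ is the $2$-monad for braided strict monoidal categories. Alternatively, one can argue directly: using Lemma \ref{coeq-lem} and the freeness of the right $Br_n$-action on the objects of $B(n)$, the objects of $\underline{B}(X) = \coprod_n B(n) \times_{Br_n} X^n$ are words in the objects of $X$ and a morphism $(x_1,\dots,x_n) \to (y_1,\dots,y_n)$ is a pair consisting of a braid $b \in Br_n$ together with morphisms $f_i \colon x_i \to y_{\pi(b)(i)}$, exhibiting $\underline{B}(X)$ as the free braided strict monoidal category on $X$, after which one checks that the operadic unit and multiplication realize the unit and multiplication of the corresponding free monad.

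The main obstacle I expect is the hexagon step: one must check carefully that the braid relations translate into exactly the two hexagon axioms with the correct orientations, and that strictness (not merely coherence up to isomorphism) of associativity and units is genuinely forced by the operad structure on $B$ rather than silently assumed. Keeping straight the two conventions for how $\pi(b)$ permutes inputs versus outputs is where orientation errors are most likely to arise.
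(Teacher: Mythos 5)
Your proposal is correct, but your primary route differs from the paper's. The paper does not prove this proposition by unpacking operad maps into $\m{E}_{X}$; it simply cites Fiedorowicz \cite{fie-br} and then justifies the identification concretely, in exactly the manner of your ``alternative'' argument: the objects of $\underline{B}(X)$ are finite lists of objects of $X$ (freeness and transitivity of the right $Br_{n}$-action on the objects of the contractible groupoid $B(n)$ lets one normalize every class to $[e;\underline{x}]$), morphisms are generated by those of $X$ together with isomorphisms $x_{1},\ldots,x_{n} \rightarrow x_{\gamma^{-1}(1)},\ldots,x_{\gamma^{-1}(n)}$ for $\gamma \in Br_{n}$, and this exhibits $\underline{B}(X)$ as the free braided strict monoidal category on $X$ by the Joyal--Street coherence theorem \cite{js}, after which one checks the 2-monad structure is the expected one. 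Your main route --- identifying $B$-algebras with braided strict monoidal categories via Proposition \ref{endoalg}, transporting along $B\mbox{-}\mb{Alg}_{s} \cong \underline{B}\mbox{-}\mb{Alg}_{s}$, and recovering the 2-monad from the 2-monadic forgetful 2-functor --- is a legitimately different decomposition: it trades the explicit computation of free algebras for an abstract monadicity argument, and it makes visible where each braided-category axiom comes from, whereas the paper's route gives the free-algebra description that is actually used later (e.g., in the proof of Theorem \ref{braidpscomm}). One small correction to your sketch: the two hexagon identities do not arise from the braid relation $\sigma_{1}\sigma_{2}\sigma_{1} = \sigma_{2}\sigma_{1}\sigma_{2}$, but from compatibility of $F$ with operadic composition applied to the cabled elements $\mu(\sigma; e_{2}, e_{1})$ and $\mu(\sigma; e_{1}, e_{2})$ in $Br_{3}$, which decompose the block braidings $\beta_{x \otimes y, z}$ and $\beta_{x, y \otimes z}$; the braid relation itself is already encoded in the group structure of $Br_{3}$. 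This slip is harmless in your argument, since contractibility of each $B(n)$ forces any two parallel morphisms to agree, so every coherence diagram commutes automatically once the object-level assignment is equivariant and functorial --- which is precisely the mechanism you invoke.
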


The interested reader can easily verify that algebras for the $\mb{Br}$-operad $B$ are braided strict monoidal categories.  The objects of $\underline{B}(X)$ can be identified with finite lists of objects of $X$, and morphisms are generated by the morphisms of $X$ together with new isomorphisms
\[
x_{1}, \ldots, x_{n} \stackrel{\gamma}{\longrightarrow} x_{\gamma^{-1}(1)}, \ldots, x_{\gamma^{-1}(n)}
\]
where $\gamma \in Br_{n}$ and the notation $\gamma^{-1}(i)$ means, as before, that we take the preimage of $i$ under the permutation $\pi(\gamma)$ associated to $\gamma$.  This shows that $\underline{B}(X)$ is the free braided strict monoidal category generated by $X$ according to \cite{js}, and it is easy to verify that the 2-monad structure on $\underline{B}$ arising from the $\mb{Br}$-operad structure on $B$ is the correct one to produce braided strict monoidal categories as algebras.

\begin{Defi}
A braid $\gamma \in Br_{n}$ is \textit{positive} if it is an element of the submonoid of $Br_{n}$ generated by the elements $\sigma_{1}, \sigma_{2}, \ldots, \sigma_{n-1}$.
\end{Defi}

\begin{Defi}
 A braid $\gamma \in Br_{n}$ is \textit{minimal} if no pair of strands in $\gamma$ cross twice.
\end{Defi}

For our purposes, we are interested in braids which are both positive and minimal.  A proof of the following lemma can be found in \cite{EM2}.

\begin{lem}\label{pmlem}
Let $PM_{n}$ be the subset of $Br_{n}$ consisting of positive, minimal braids.  Then the function sending a braid to its underlying permutation is a bijection of sets $PM_{n} \rightarrow \Sigma_{n}$.
\end{lem}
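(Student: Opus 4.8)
The plan is to identify $PM_n$ with the set of positive lifts of reduced words in $\Sigma_n$ and then invoke the word property for Coxeter groups. The function in question is the restriction $\pi|_{PM_n} \colon PM_n \to \Sigma_n$ of the underlying-permutation map. The starting point is the exponent-sum homomorphism $e \colon Br_n \to \mathbb{Z}$ sending each generator $\sigma_i$ to $1$; this is well defined because both braid relations preserve word length, and on a positive braid $e(\beta)$ records the total number of crossings in any positive word representing $\beta$ (for a positive word the exponent sum equals the number of letters). I would first establish, for every positive $\beta$, the inequality $e(\beta) \ge \mathrm{inv}(\pi(\beta))$, where $\mathrm{inv}$ denotes the number of inversions of a permutation.

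To prove this and the accompanying characterization of minimality, I would track strands through a positive word. Each letter $\sigma_i$ crosses exactly the pair of strands currently occupying positions $i, i+1$, so $e(\beta) = \sum_{\{a,b\}} c(a,b)$, where $c(a,b)$ is the number of times the pair $\{a,b\}$ crosses. A pair is inverted by $\pi(\beta)$ precisely when $c(a,b)$ is odd, whence $e(\beta) = \sum c(a,b) \ge \#\{\{a,b\} : c(a,b) \text{ odd}\} = \mathrm{inv}(\pi(\beta))$, with equality if and only if every $c(a,b)$ lies in $\{0,1\}$. This yields the key equivalence: a positive braid $\beta$ is minimal if and only if $e(\beta) = \mathrm{inv}(\pi(\beta))$, equivalently if and only if any positive word representing $\beta$ is a reduced expression of $\pi(\beta)$ of length $\ell(\pi(\beta)) = \mathrm{inv}(\pi(\beta))$ in the adjacent transpositions $s_i = (i \ i+1)$.

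With this equivalence, surjectivity is immediate: given $\sigma \in \Sigma_n$, choose a reduced expression $\sigma = s_{i_1} \cdots s_{i_k}$ with $k = \ell(\sigma)$ and lift it to $\beta = \sigma_{i_1} \cdots \sigma_{i_k} \in Br_n$; then $\pi(\beta) = \sigma$ and $e(\beta) = k = \mathrm{inv}(\sigma)$, so $\beta \in PM_n$. For injectivity, suppose $\beta, \beta' \in PM_n$ with $\pi(\beta) = \pi(\beta') = \sigma$. By the equivalence both are given by reduced expressions of $\sigma$, and the word property for Coxeter groups (Matsumoto--Tits) asserts that any two reduced expressions of $\sigma$ are connected by the Coxeter braid moves $s_i s_{i+1} s_i = s_{i+1} s_i s_{i+1}$ and $s_i s_j = s_j s_i$ for $|i - j| \ge 2$. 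Lifting each such move to the generators $\sigma_i$ gives exactly the defining relations of $Br_n$, so the two expressions represent the same element; hence $\beta = \beta'$. The main obstacle is precisely this final step: the crossing bookkeeping is elementary, but injectivity rests on the nontrivial fact that the Coxeter braid relations alone suffice to pass between reduced words, which is what makes the positive braid lift of a reduced word independent of the chosen word.
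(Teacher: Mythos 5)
Your proposal is correct, but there is nothing in the paper to compare it against line by line: the paper does not prove this lemma at all, deferring instead to \cite{EM2} (where $PM_{n}$ appears as the set of \emph{positive permutation braids}). Your argument is, in substance, the standard proof from that literature, and it holds up. The exponent-sum homomorphism $e \colon Br_{n} \rightarrow \mathbb{Z}$ is well defined because both defining relations of $Br_{n}$ preserve exponent sum, so the length of a positive word is an invariant of the positive braid it represents; your crossing bookkeeping $e(\beta) = \sum_{\{a,b\}} c(a,b) \geq \#\{\{a,b\} : c(a,b) \textrm{ odd}\} = \mathrm{inv}(\pi(\beta))$ is sound, since a pair is inverted exactly when it crosses an odd number of times, and equality holds precisely when every $c(a,b) \in \{0,1\}$. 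This gives the key equivalence (minimal iff $e(\beta) = \mathrm{inv}(\pi(\beta))$ iff positive words for $\beta$ are reduced words for $\pi(\beta)$), from which surjectivity follows by lifting any reduced expression, and injectivity follows from the Matsumoto--Tits word property, whose braid moves $s_{i}s_{i+1}s_{i} = s_{i+1}s_{i}s_{i+1}$ and $s_{i}s_{j} = s_{j}s_{i}$ ($|i-j| \geq 2$) lift verbatim to the defining relations of $Br_{n}$. You correctly identify the word property as the only genuinely nontrivial input. One merit of your formulation worth noting: because $e$ is a homomorphism, the equality $e(\beta) = \mathrm{inv}(\pi(\beta))$ is independent of the chosen positive word, so your equivalence simultaneously shows that minimality of a positive braid is well posed on group elements (if one positive representative has no doubly crossing pair, none does) --- a point the paper's definition of minimal braid leaves implicit.
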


\begin{rem}\label{pmrem}
It is worth noting that this bijection is not an isomorphism of groups, since $PM_{n}$ is not a group or even a monoid.  The element $\sigma_{1} \in Br_{n}$ is certainly in $PM_{n}$, but $\sigma_{1}^{2}$ is not as the first two strands cross twice.  Thus we see that the product of two minimal braids is generally not minimal, but by definition the product of positive braids is positive.
\end{rem}

\begin{proof}[Proof of Theorem \ref{braidpscomm}]
In order to use Theorem \ref{pscomm} with the action operad being the braid operad $\mb{Br}$, we must first construct elements $t_{m,n} \in Br_{mn}$ satisfying certain properties.  Using Lemma \ref{pmlem}, we define $t_{m,n}$ to be the unique positive braid such that $\pi(t_{m,n}) = \tau_{m,n}$.  Since $\tau_{1,n} = e_{n} = \tau_{n,1}$ in $\Sigma_{n}$ and the identity element $e_{n} \in Br_{n}$ is positive and minimal, we have that $t_{1,n} = e_{n} = t_{n,1}$ in $Br_{n}$.  Thus in order to verify the remaining hypotheses, we must check two equations, each of which states that some element $t_{m,n}$ can be expressed as a product of operadic compositions of other elements.

Let $l, m_{1}, \ldots, m_{l}, n$ be natural numbers, and let $M = \sum m_{i}$.  We must check that
\[
\mu(e_{l}; t_{n, m_{1}}, \ldots, t_{n, m_{l}}) \mu(t_{n,l}; \underline{e_{m_{1}}, \ldots, e_{m_{l}}}) = t_{N, l}
\]
in $Br_{lN}$.  These braids have the same underlying permutations by construction, and both are positive since each operadic composition on the left is positive.  The braid on the right is minimal by definition, so if we prove that the braid on the left is also minimal, they are necessarily equal.  Now $\mu(t_{n,l}; \underline{e_{m_{1}}, \ldots, e_{m_{l}}})$ is given by the braid for $t_{n,l}$ but with the first strand replaced by $m_{1}$ strands, the second strand replaced by $m_{2}$ strands, and so on for the first $l$ strands of $t_{n,l}$, and then repeating for each group of $l$ strands.  In particular, since strands $i, i+l, i+2l, \ldots, i + (n-1)l$ never cross in $t_{n,l}$, none of the $m_{i}$ strands that each of these is replaced with cross.  The braid $\mu(e_{l}; t_{n, m_{1}}, \ldots, t_{n, m_{l}})$ consists of the disjoint union of the braids for each $t_{n,m_{i}}$, so if two strands cross in $\mu(e_{l}; t_{n, m_{1}}, \ldots, t_{n, m_{l}})$ then they must both cross in some $t_{n,m_{i}}$.  The strands in $t_{n,m_{i}}$ are those numbered from $n(m_{1} + \cdots + m_{i-1}) + 1$ to $n(m_{1} + \cdots + m_{i-1} + m_{i})$.  This is a consecutive collection of $nm_{i}$ strands, and it is simple to check that these strands are precisely those connected (via the group operation in $Br_{Nl}$, concatenation) to the duplicated copies of strands $i, i+l, i+2l, \ldots, i + (n-1)l$ in $t_{n,l}$.  Thus if a pair of strands were to cross in $\mu(e_{l}; t_{n, m_{1}}, \ldots, t_{n, m_{l}})$, that pair cannot also have crossed in $\mu(t_{n,l}; \underline{e_{m_{1}}, \ldots, e_{m_{l}}})$, showing that the resulting product braid
\[
\mu(e_{l}; t_{n, m_{1}}, \ldots, t_{n, m_{l}}) \mu(t_{n,l}; \underline{e_{m_{1}}, \ldots, e_{m_{l}}})
\]
is minimal.  The calculation showing that
\[
\mu(t_{m,l}; \underline{e_{1}}, \ldots, \underline{e_{n_{m}}}) \mu(e_{m}; t_{n_{1}, l}, \ldots, t_{n_{m}, l})
\]
is also minimal follows from the same argument, showing that it is equal to $t_{N, l}$ (here $N$ is the sum of the $n_{i}$, where once again $i$ ranges from 1 to $l$).

These calculations show, using Theorem \ref{pscomm}, that the $\mb{Br}$-operad $B$ induces a 2-monad which has a pseudo-commutative structure.  As noted before, $B$-algebras are precisely braided strict monoidal categories.  The second pseudo-commutative structure arises by using negative, minimal braids instead of positive ones, and proceeds using the same arguments.  This finishes the first part of the proof of Theorem \ref{braidpscomm}.

We will now show that neither of these pseudo-commutative structures is symmetric.  The symmetry axiom in this case reduces to the fact that, in some category which is given as a coequalizer, the morphism with first component
\[
f:\mu(p; \underline{q}) \cdot t_{n,m}t_{m,n} \rightarrow \mu(q; \underline{p}) \cdot t_{m,n} \rightarrow \mu(p; \underline{q})
\]
is the identity.  Now it is clear that that $t_{n,m}$ is not equal to $t_{m,n}^{-1}$ in general: taking $m=n=2$, we note that $t_{2,2} = \sigma_{2}$, and this element is certainly not of order two in $Br_{4}$.  $B(4)$ is the category whose objects are the elements of $Br_{4}$ with a unique isomorphism between any two pair of objects, and $Br_{4}$ acts by multiplication on the right; this action is clearly free and transitive.  We recall (see Lemma \ref{coeq-lem}) that in a coequalizer of the form $A \times_{G} B$, we have that a morphism $[f_{1}, f_{2}]$ equals $[g_{1}, g_{2}]$ if and only if there exists an $x \in G$ such that
\[
\begin{array}{rcl}
f_{1} \cdot x & = & g_{1}, \\
x^{-1} \cdot f_{2} & = & g_{2}.
\end{array}
\]
For the coequalizer in question, for $f$ to be the first component of an identity morphism would imply that $f \cdot x$ would be a genuine identity in $B(4)$ for some $x$.  But $f \cdot x$ would have source $\mu(p; \underline{q}) t_{n,m}t_{m,n}x$ and target $\mu(p; \underline{q})x$, which requires $t_{n,m}t_{m,n}$ to be the identity group element for all $n,m$.  In particular, this would force $t_{2,2}$ to have order two, which we have noted above does not hold in $Br_{4}$, thus giving a contradiction.
\end{proof}

\begin{rem}
The pseudo-commutativities given above are not necessarily the only ones that exist for the $\mb{Br}$-operad $B$, but we do not know a general strategy for producing others.
\end{rem}

\bibliographystyle{plain}
\bibliography{operad_ref}

\end{document}